\newcommand{\mathsym}[1]{{}}
\newtheorem{theorem}{Theorem}[section]
\theoremstyle{definition}
\newtheorem{definition}[theorem]{Definition}
\newtheorem{rem}[theorem]{Remark}
\newtheorem{prop}[theorem]{Proposition}
\theoremstyle{remark}
\numberwithin{equation}{section}
\begin{document}

\title[infinite moment problem]{moment problem in infinitely many variables}

\author[M. Ghasemi, S. Kuhlmann, M. Marshall]{Mehdi Ghasemi$^1$, Salma Kuhlmann$^2$, Murray Marshall$^1$}

\address{$^1$Department of Mathematics and Statistics,\newline\indent
University of Saskatchewan,\newline\indent
Saskatoon, SK. S7N 5E6, Canada}
\email{mehdi.ghasemi@usask.ca, marshall@math.usask.ca}
\address{$^2$Fachbereich Mathematik und Statistik,\newline\indent
Universit\"{a}t Konstanz\newline\indent
78457 Konstanz, Germany}
\email{salma.kuhlmann@uni-konstanz.de}
\keywords{positive definite, moments, sums of squares, Carleman condition}
\subjclass[2010]{Primary 44A60 Secondary 14P99}

\begin{abstract} The multivariate moment problem is investigated in the general context of the polynomial algebra 
$\mathbb{R}[x_i \mid i \in \Omega]$  in an arbitrary number of variables $x_i$, $i\in \Omega$. The results obtained 
are sharpest when the index set $\Omega$ is countable. Extensions of Haviland's theorem \cite{hav} and Nussbaum's 
theorem \cite{N} are proved. Lasserre's description of the support of the measure in terms of the non-negativity of 
the linear functional on a quadratic module of $\mathbb{R}[x_i \mid i \in \Omega]$  in \cite{L} is shown to remain 
valid in this more general situation.  The main tool used in the paper is an extension of the localization method 
developed by the third author in \cite{M1}, \cite{M2} and \cite{M3}. Various results proved in \cite{M1}, \cite{M2} 
and \cite{M3} are shown to continue to hold in this more general setting.
\end{abstract}

\maketitle

\section{Introduction}

The univariate moment problem is an old problem
with origins tracing back to work of Stieltjes \cite{St}. Given a sequence $(s_k)_{k\ge 0}$ of real numbers one wants 
to know when there exists a Radon measure $\mu$ on $\mathbb{R}$ such that 
\[
	s_k = \int x^k d\mu \ \forall \ k\ge 0.\footnote{All Radon measures considered are assumed to be positive.}
\]
Since the monomials $x^k$, $k\ge 0$ form a basis for the polynomial algebra $\mathbb{R}[x]$, this problem is equivalent 
to the following one: Given a linear functional $L : \mathbb{R}[x] \rightarrow \mathbb{R}$, when does there exist a Radon 
measure $\mu$ on $\mathbb{R}$ such that $L(f) = \int f d\mu$ $\forall$ $f\in \mathbb{R}[x]$. One also wants to know to 
what extent the measure is unique, assuming it exists. \cite{A} and \cite{ST} are standard references.

Work on the multivariate moment problem is more recent. For $n\ge1$, $\mathbb{R}[\underline{x}]:=\mathbb{R}[x_1,\dots,x_n]$ 
denotes the polynomial ring in $n$ variables $x_1,\dots,x_n$.
Given a linear functional $L : \mathbb{R}[\underline{x}] \rightarrow \mathbb{R}$ one wants to know when there exists a 
Radon measure $\mu$ on $\mathbb{R}^n$ such that $L(f) = \int f d\mu$ $\forall$ $f\in \mathbb{R}[\underline{x}]$. 
Again, one also wants to know to what extent the measure is unique, assuming it exists. \cite{B}, \cite{F}, \cite{L1}, 
\cite{M}, \cite{PS} are general references. A major motivation here is the close connection between the multivariate 
moment problem and polynomial optimization using semidefinite programming; see \cite{L1}, \cite{La}, \cite{M} and the 
references therein.

There appear to be only just a few papers dealing with the moment problem in infinitely many variables. Of these, the present 
paper seems to be the only one dealing with the general case. \cite{GIKM} deals with the special case where the linear 
functional $L : \mathbb{R}[x_i \mid i \in \Omega] \rightarrow \mathbb{R}$ is continuous with respect to a submultiplicative 
seminorm (more generally, with respect to a locally multiplicatively convex topology) on $\mathbb{R}[x_i \mid i \in \Omega]$. 
\cite{BCR}, \cite{GKM} and \cite{GMW} are precursors of \cite{GIKM}. \cite{BK} and \cite{IKR} deal  with the case of symmetric 
algebras over nuclear spaces.

The method used in the present paper is an extension of the localization method in \cite{M1}, \cite{M2} and \cite{M3}, 
the latter method being motivated in turn by results in \cite{KM}, \cite{M-1} and \cite{PV}. 
It is worth noting that, although some of the results in \cite{M1}, \cite{M2} and \cite{M3}  are similar in nature to 
those in \cite{PV}, the arguments are completely different.

The paper was written with no particular application in mind. At the same time, it seems reasonable to expect that 
applications do exist.  E.g., there may be connections to some variant of the semi-infinite polynomial optimization problem 
considered in \cite{L2}, \cite{WG}.


Section 2 introduces terminology and notation. Two important new concepts, constructibly Borel sets and constructibly Radon 
measures, are defined in this section.
In Section 3 we introduce three algebras $A =A_{\Omega}:= \mathbb{R}[x_i \mid i \in \Omega]$, 
$B= B_{\Omega} := \mathbb{R}[x_i, \frac{1}{1+x_i^2} \mid i\in \Omega]$, and 
$C=C_{\Omega} := \mathbb{R}[\frac{1}{1+x_i^2}, \frac{x_i}{1+x_i^2} \mid i\in \Omega]$, show how the moment problem for 
$A_{\Omega}$ reduces to understanding the extensions of a linear functional $L : A_{\Omega} \rightarrow \mathbb{R}$ 
to a PSD linear functional on $B_{\Omega}$, see Proposition \ref{extendibility}, and prove that PSD linear functionals 
$L : B_{\Omega} \rightarrow \mathbb{R}$ correspond bijectively to constructibly Radon measures on $\mathbb{R}^{\Omega}$, 
see Proposition \ref{new}. We also consider the important question of when the constructibly Radon measures thus obtained 
are actually Radon, see Proposition \ref{addendum}, Remark \ref{question} and Proposition \ref{radon}. In Section 4 we explain 
how results in \cite{M2} and \cite{M3} carry over, more-or-less word-for-word, to the case of infinitely many variables. 
In particular, we extend results of Fuglede \cite{F} and Petersen \cite{Pet}, see Propositions \ref{density corollary} and 
\ref{at most one}, and we establish extensions of Nussbaum's well-known sufficient condition for a linear functional 
$L : A_{\Omega} \rightarrow \mathbb{R}$ to correspond to a measure \cite{N}; see Propositions \ref{exactly one}, \ref{Nussbaum} 
and \ref{strong}. Section 5 deals with the problem of describing supporting sets for the measure. There are a number of 
important results in Section 5, see for example Proposition \ref{hav}, which is an extension of Haviland's theorem \cite{hav}, 
and Proposition \ref{support}, which is an extension of a result of Lasserre \cite{L}. In Section 6 we explain how the 
cylinder results in \cite{M1}, \cite{M2} and \cite{M3} extend to infinitely many variables.

The reader will notice that everything works more-or-less perfectly in case $\Omega$ is countable. If $\Omega$ is uncountable 
everything still works, but one typically only knows that the measures obtained are constructibly Radon (as opposed to Radon) 
and the results obtained concerning the support of the measure are a bit more restrictive than one might like.

\section{Terminology and Notation}

All rings considered are commutative with 1. All ring homomorphisms considered send 1 to 1. All rings we are interested in 
are $\mathbb{R}$-algebras. For $n\ge 1$, $\mathbb{R}[\underline{x}]:=\mathbb{R}[x_1,\dots,x_n]$.
For a topological space $X$, $C(X)$ denotes the ring of all continuous functions from $X$ to $\mathbb{R}$.

Let $A$ be a commutative ring. $X(A)$ denotes the \it character space \rm of $A$, i.e., the set of all ring homomorphisms 
$\alpha : A \rightarrow \mathbb{R}$.
For $a \in A$, $\hat{a} = \hat{a}_{A} : X(A) \rightarrow \mathbb{R}$ is defined by $\hat{a}_A(\alpha) = \alpha(a)$.
$X(A)$ is given the weakest topology such that the functions $\hat{a}_A$, $a \in A$ are continuous. 
The mapping $a \mapsto \hat{a}_A$ defines a ring homomorphism from $A$ into C$(X(A))$.
The only ring homomorphism from $\mathbb{R}$ to itself is the identity. Ring homomorphisms from $\mathbb{R}[\underline{x}]$ 
to $\mathbb{R}$ correspond to point evaluations $f \mapsto f(\alpha)$, $\alpha \in \mathbb{R}^n$.
$X(\mathbb{R}[\underline{x}])$ is identified (as a topological space) with $\mathbb{R}^n$.
By a \it quadratic module \rm of $A$ we mean a subset $M$ of $A$ satisfying 
\[
	1 \in M, \ M+M \subseteq M \text{ and } a^2M \subseteq M \text{ for each } a \in A.
\]
A \it preordering \rm of $A$ is a quadratic module of $A$ which is also closed under multiplication. For a subset $X$ of $X(A)$, 
\[
	\operatorname{Pos}_A(X) := \{ a \in A \mid \hat{a}_A \ge 0 \text{ on } X\}
\]
is a preordering of $A$.
We denote by $\sum A^2$ the set of all finite sums $\sum a_i^2$, $a_i \in A$. $\sum A^2$ is the unique smallest quadratic 
module of $A$. $\sum A^2$ is closed under multiplication, so $\sum A^2$ is also the unique smallest quadratic preordering of $A$.
For a subset $S \subseteq A$, the quadratic module of $A$ generated by $S$ consists of all finite sums 
$s_0+s_1g_1+\dots + s_kg_k$, $g_1,\dots, g_k \in S$, $s_0,\dots,s_k \in \sum A^2$. Also, 
\[
	X_S:= \{ \alpha \in X(A) \mid \hat{a}_A(\alpha)\ge 0 \ \forall a\in S\}.
\]
If $M= \sum A^2$ then $X_M = X(A)$. If $M$ is the quadratic module of $A$ generated by $S$ then $X_M:=  X_S$.
A quadratic module $M$ in $A$ is said to be \it archimedean \rm if for each $a \in A$ there exists an integer $k$ such that 
$k \pm a \in M$. If $M$ is a quadratic module of $A$ which is archimedean then $X_M$ is compact.
The converse is false in general \cite{JP}.

For simplicity, we assume from now on that $A$ is an $\mathbb{R}$-algebra. We record the following special case of the 
representation theorem of T. Jacobi \cite{J}.
\begin{prop}\label{jacobi} Suppose $M$ is an archimedean quadratic module of $A$. Then, for any $a \in A$, the following are equivalent:
\begin{itemize}
	\item[(1)] $\hat{a}_A \ge 0$ on $X_M$.
	\item[(2)] $a+\epsilon \in M$  for all real $\epsilon >0$.
\end{itemize}
\end{prop}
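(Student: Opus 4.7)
The direction (2)$\Rightarrow$(1) is immediate. Every $\alpha\in X_M$ is a ring homomorphism $A\to\mathbb{R}$ that is nonnegative on a generating set of $M$; combined with the fact that $\alpha$ sends $\sum A^2$ into $[0,\infty)$, this forces $\alpha(M)\subseteq[0,\infty)$. Applied to $a+\epsilon\in M$, this yields $\hat{a}_A(\alpha)+\epsilon\ge 0$ for every $\epsilon>0$, and letting $\epsilon\downarrow 0$ gives $\hat{a}_A(\alpha)\ge 0$.

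For the nontrivial direction (1)$\Rightarrow$(2), I would argue the contrapositive: assume $a+\epsilon_0\notin M$ for some $\epsilon_0>0$, and construct a character $\alpha\in X_M$ with $\hat{a}_A(\alpha)\le -\epsilon_0<0$, contradicting (1). My plan is the standard state-space approach. The archimedean hypothesis produces the algebra seminorm
\[
\|b\|_M \;:=\; \inf\{r>0 : r\pm b\in M\}
\]
on $A$, and one checks that the open $\|\cdot\|_M$-unit ball around $1$ lies in $M$, so $1$ sits in the interior of the convex cone $M$. Consider the state space
\[
K \;:=\; \{L:A\to\mathbb{R}\ \text{linear}\mid L(1)=1,\ L(M)\subseteq[0,\infty)\},
\]
which is convex and weak-$*$ compact by Banach--Alaoglu. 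Since $a+\epsilon_0\notin M$ while $M$ has nonempty interior, a Hahn--Banach/Eidelheit separation furnishes some $L_0\in K$ with $L_0(a+\epsilon_0)\le 0$; evaluation at $a+\epsilon_0$ is a continuous affine functional on the compact convex set $K$, so it attains its minimum at an extreme point $\alpha\in K$, and $\alpha(a+\epsilon_0)\le L_0(a+\epsilon_0)\le 0$.

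The remaining and hardest step, which I expect to be the main obstacle, is showing that every extreme point of $K$ is in fact a ring homomorphism (so that $\alpha\in X_M$). This is Kadison's multiplicativity theorem for archimedean order-unit spaces, and is precisely where the archimedean hypothesis is used essentially. The idea is that for any square $c=d^2\in\sum A^2$ with $c,1-c\in M$, the relation $c\cdot M\subseteq M$ (closure of $M$ under multiplication by squares) ensures $L_c(x):=\alpha(cx)/\alpha(c)$ lies in $K$ whenever $\alpha(c)>0$, and the decomposition $\alpha=\alpha(c)L_c+\alpha(1-c)L_{1-c}$ together with extremality forces $L_c=\alpha$, i.e., $\alpha(cx)=\alpha(c)\alpha(x)$ for all $x\in A$. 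Using the identity $b=\tfrac{1}{4N}((b+N)^2-(b-N)^2)$, with $N$ from the archimedean bound on $b$, and rescaling so that the relevant squares are dominated by $1$ in the $M$-ordering, multiplicativity propagates to all of $A$. Once $\alpha$ is a character, $\alpha\in X_M$ and $\hat{a}_A(\alpha)=\alpha(a)\le -\epsilon_0<0$ contradicts (1), completing the proof.
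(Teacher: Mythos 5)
The paper itself offers no proof of this proposition: it is quoted as a special case of Jacobi's representation theorem, with the ``simple proof'' deferred to \cite{M}. That proof is purely algebraic: one extends $M$ by Zorn's lemma to a quadratic module $Q$ maximal subject to $a+\epsilon\notin Q$, and uses the archimedean property to show $Q\cup -Q=A$ and $A/(Q\cap -Q)\cong\mathbb{R}$, which produces a character in $X_M$ at which $\hat{a}_A<0$. Your functional-analytic route is therefore a genuinely different strategy, and its first half is fine: the seminorm $\|\cdot\|_M$, the fact that $1$ is an interior point of $M$, the compactness of the state space $K$, the Eidelheit separation (after checking $L_0(1)>0$ so that one can normalize), and Bauer's minimum principle are all in order.

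The gap is exactly at the step you flag as the main obstacle, and it is not merely technical. The Kadison--Dubois decomposition $\alpha=\alpha(c)L_c+\alpha(1-c)L_{1-c}$ only yields $L_c=\alpha$ by extremality if \emph{both} $L_c$ and $L_{1-c}$ lie in $K$. For $L_c$ this works because $c=d^2$ is a square and $d^2M\subseteq M$. But $1-c=1-d^2$ is not a square, and a quadratic module is closed only under multiplication by squares, so there is no reason why $(1-c)M\subseteq M$, hence no reason why $L_{1-c}(M)\subseteq[0,\infty)$. For a \emph{preordering} one takes $c$ and $1-c$ both in the cone and uses closure under products --- that is the classical Kadison--Dubois setting --- but the entire point of Jacobi's theorem is that it holds for quadratic modules, where precisely this decomposition breaks down. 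Rescaling does not help: $1-\lambda d^2\in M$ is achievable by archimedeanness, but what you need is $1-\lambda d^2\in\sum A^2$, which fails whenever $d$ is unbounded (e.g.\ any nonconstant polynomial). So ``Kadison's multiplicativity theorem'' is not available in the generality you invoke it; as written, you have reduced the problem to an unproved statement that is essentially equivalent in difficulty to Jacobi's theorem itself. If you want to stay functional-analytic, the standard repair is to drop the extreme-point step and apply the GNS construction directly to the separating state $L_0$: the identity $2k(k^2-b^2)f^2=(k+b)^2\bigl((k-b)f^2\bigr)+(k-b)^2\bigl((k+b)f^2\bigr)$ shows $(k^2-b^2)f^2\in M$ whenever $k\pm b\in M$, so each multiplication operator is bounded and self-adjoint on the GNS space; the spectral theorem then gives a representing Radon measure for $L_0$ supported on $X_M$, and $L_0(a)=\int\hat{a}_A\,d\mu\ge 0$ contradicts $L_0(a)\le-\epsilon_0$.
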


Note: The implication (2) $\Rightarrow$ (1) is trivial. The implication (1) $\Rightarrow$ (2) is non-trivial.
See \cite{BS}, \cite{K} and \cite{P} for early versions of Jacobi's theorem. See \cite{M} for a simple proof.

The open sets $$U_A(a) := \{ \alpha \in X(A)\mid \hat{a}_A(\alpha)>0\}, \ a\in A$$ form a subbasis for the topology on 
$X(A)$ (even a basis). Suppose $A$ is generated as an $\mathbb{R}$-algebra by $x_i$, $i\in \Omega$.
The embedding $X(A) \hookrightarrow \mathbb{R}^{\Omega}$ defined by $\alpha \mapsto (\alpha(x_i))_{i\in \Omega}$ identifies 
$X(A)$ with a subspace of $\mathbb{R}^{\Omega}$. Sets of the form 
\[
	\{ b \in \mathbb{R}^{\Omega} \mid \sum_{i\in I} (b_i-p_i)^2<r\},
\] 
where $r,p_i\in \mathbb{Q}$ and $I$ is a finite subset of $\Omega$, form a basis for the product topology on $\mathbb{R}^{\Omega}$.  
It follows that sets of the form
\begin{equation}\label{e}
U_A(r-\sum_{i\in I}(x_i-p_i)^2), \ r, p_i \in \mathbb{Q}, \ I \text{ a finite subset of } \Omega,
\end{equation}
form a basis for $X(A)$.

A subset $E$ of $X(A)$ is called \it Borel \rm if $E$ is an element of the $\sigma$-algebra of subsets of $X(A)$ generated by 
the open sets.  A subset $E$ of $X(A)$ is said to be \it constructible \rm (resp., \it constructibly Borel\rm) if $E$ is an 
element of the algebra (resp., $\sigma$-algebra) of subsets of $X(A)$ generated by the $U_A(a)$, $a \in A$.

Clearly constructible $\Rightarrow$ constructibly Borel $\Rightarrow$ Borel.
\begin{prop}\label{countable} If $A$ is generated as an $\mathbb{R}$-algebra by a countable set $\{ x_i \mid i\in \Omega\}$ 
then sets of the form (\ref{e}) form a countable basis for the topology on $X(A)$ and every Borel set of $X(A)$ is 
constructibly Borel.
\end{prop}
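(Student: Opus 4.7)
The plan is to reduce both claims to routine $\sigma$-algebra bookkeeping, using countability of $\Omega$ in an essential way. The text has already established that sets of the form (\ref{e}) form a basis for the topology on $X(A)$, so I would not re-prove that; I would only count them and then propagate countability through the generation of the Borel $\sigma$-algebra.

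For the countable basis claim, I would note that a set of form (\ref{e}) is specified by a finite subset $I \subseteq \Omega$, a rational $r \in \mathbb{Q}$, and a tuple $(p_i)_{i \in I} \in \mathbb{Q}^{I}$. Since $\Omega$ is countable, the family of finite subsets of $\Omega$ is countable, and for each such $I$ the parameter set $\mathbb{Q} \times \mathbb{Q}^{I}$ is countable. A countable union of countable sets is countable, so the family (\ref{e}) is countable. Combined with the basis property already recorded above the proposition, this gives the first assertion.

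For the second claim, let $\mathcal{B}_c$ denote the $\sigma$-algebra generated by the $U_A(a)$, $a \in A$, i.e., the collection of constructibly Borel subsets of $X(A)$. Each set of form (\ref{e}) is literally a $U_A(a)$ with $a = r - \sum_{i \in I}(x_i - p_i)^2 \in A$, hence lies in $\mathcal{B}_c$. Given any open set $U \subseteq X(A)$, the basis property writes $U$ as a union of basic open sets of form (\ref{e}); because the basis is countable, this is a \emph{countable} union of members of $\mathcal{B}_c$, so $U \in \mathcal{B}_c$. Thus $\mathcal{B}_c$ is a $\sigma$-algebra containing every open set, and therefore contains the Borel $\sigma$-algebra, giving the second assertion.

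The only conceptual point worth flagging is that countability of $\Omega$ is used twice and nontrivially: first to make the basis (\ref{e}) countable, and then, crucially, to convert the expression of an open set as an arbitrary union of basic opens into a \emph{countable} union, which is what allows the argument to stay inside a $\sigma$-algebra rather than merely an algebra. Without that second use, one would only get that open sets lie in the algebra generated by the $U_A(a)$ up to arbitrary unions, which is the very obstruction the paper flags for uncountable $\Omega$.
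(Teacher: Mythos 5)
Your proof is correct and is exactly the argument the paper has in mind (the paper simply dismisses it with ``This is clear''): count the parameters defining the basic sets (\ref{e}) to get a countable basis, then observe that every open set is a countable union of sets $U_A(a)$ and hence lies in the constructibly Borel $\sigma$-algebra. Your closing remark correctly identifies where countability of $\Omega$ is essential and why the statement fails in general.
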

\begin{proof} This is clear.
\end{proof}
\begin{prop} A subset $E$ of $X(A)$ is constructibly Borel iff $E = \pi^{-1}(E')$ for some Borel set $E'$ of $X(A')$, where 
$A'$ is a countably generated subalgebra of $A$ and $\pi : X(A) \rightarrow X(A')$ is the canonical restriction map, i.e., 
$\pi(\alpha) =\alpha|_{A'}$.
\end{prop}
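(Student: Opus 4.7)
The plan is a two-direction argument, both sides proved by the standard ``good sets'' technique.

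For the backward direction, fix a countably generated subalgebra $A' \subseteq A$ and let $\pi : X(A) \to X(A')$ be the restriction map. Consider the collection
\[
\mathcal{G} := \{ E' \subseteq X(A') \mid \pi^{-1}(E') \text{ is constructibly Borel in } X(A)\}.
\]
Since $\pi^{-1}$ commutes with complements and countable unions, $\mathcal{G}$ is a $\sigma$-algebra. Moreover, for each $a \in A' \subseteq A$ we have $\pi^{-1}(U_{A'}(a)) = U_A(a)$, so $\mathcal{G}$ contains every $U_{A'}(a)$. By Proposition \ref{countable}, the Borel $\sigma$-algebra of $X(A')$ coincides with the constructibly Borel $\sigma$-algebra, which is generated by such $U_{A'}(a)$. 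Hence every Borel $E' \subseteq X(A')$ lies in $\mathcal{G}$, giving the backward direction.

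For the forward direction, let $\mathcal{F}$ be the collection of subsets of $X(A)$ of the form $\pi_{A'}^{-1}(E')$ for some countably generated subalgebra $A' \subseteq A$ and some Borel $E' \subseteq X(A')$. We show $\mathcal{F}$ is a $\sigma$-algebra containing $\{U_A(a) : a \in A\}$; this suffices. For each $a \in A$, take $A' = \mathbb{R}[a]$ (singly generated) and $E' = U_{A'}(a)$; then $\pi_{A'}^{-1}(E') = U_A(a)$, so $U_A(a) \in \mathcal{F}$. Closure under complements is immediate, since $\pi_{A'}^{-1}(X(A') \setminus E') = X(A) \setminus \pi_{A'}^{-1}(E')$.

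The main work is closure under countable unions. Suppose $E_n = \pi_{A_n}^{-1}(E_n')$ for each $n \in \mathbb{N}$, with each $A_n$ countably generated and each $E_n'$ Borel in $X(A_n)$. Let $A'$ be the subalgebra generated by $\bigcup_n A_n$; choosing a countable generating set for each $A_n$ and taking their union shows $A'$ is countably generated. For each $n$, the inclusion $A_n \hookrightarrow A'$ induces a continuous restriction map $\rho_n : X(A') \to X(A_n)$, and $\pi_{A_n} = \rho_n \circ \pi_{A'}$. Hence
\[
\bigcup_n E_n \;=\; \bigcup_n \pi_{A'}^{-1}(\rho_n^{-1}(E_n')) \;=\; \pi_{A'}^{-1}\!\left( \bigcup_n \rho_n^{-1}(E_n')\right).
\]
Since each $\rho_n$ is continuous, each $\rho_n^{-1}(E_n')$ is Borel in $X(A')$, and the countable union is Borel. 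Thus $\bigcup_n E_n \in \mathcal{F}$, completing the proof.

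The only step requiring any care is this amalgamation of countably many countably generated subalgebras into one, which works precisely because a countable union of countable sets is countable; everything else is routine $\sigma$-algebra bookkeeping.
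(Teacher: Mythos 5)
Your proof is correct and follows essentially the same route as the paper's: both directions come down to pulling back the generating sets $U_{A'}(a)$ along restriction maps and, for closure under countable unions, amalgamating countably many countably generated subalgebras into a single countably generated one via the fact that a countable union of countable sets is countable. You have merely written out in more detail the two ``good sets'' arguments that the paper compresses into the identification of $\Sigma_{A'}$ with $\{\pi^{-1}(E') \mid E' \text{ Borel in } X(A')\}$ and the verification that $\bigcup_{A'}\Sigma_{A'}$ is a $\sigma$-algebra.
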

\begin{proof} Clearly $U_A(a) = \pi^{-1}(U_{A'}(a))$ for any $a\in A'$. Coupled with Proposition \ref{countable} this implies 
that, for each Borel set $E'$ of $X(A')$, $\pi^{-1}(E')$ is an element of the $\sigma$-algebra of subsets of $X(A)$ generated 
by the $U_A(a)$, $a\in A'$ (and conversely). Denote this $\sigma$-algebra by $\Sigma_{A'}$. It remains now to show that the 
union of the $\sigma$-algebras $\Sigma_{A'}$, $A'$ running through the countably generated subalgebras of $A$, is itself a 
$\sigma$-algebra. This follows from the well-known fact that a countable union of countable sets is countable (so the subalgebra 
of $A$ generated by countably many countably generated subalgebras of $A$ is itself countably generated).
\end{proof}
The support of a measure is not defined in general. For a measure space $(X, \Sigma, \mu)$ and a subset $Y$ of $X$, we say $\mu$ 
\it is supported by \rm $Y$ if $E\cap Y = \emptyset$ $\Rightarrow$ $\mu(E) = 0$ $\forall$ $E\in \Sigma$. In this situation, if 
$\Sigma' := \{ E\cap Y \mid E \in \Sigma\}$, and $\mu'(E\cap Y) := \mu(E)$ $\forall$ $E \in \Sigma$, then $\Sigma'$ is a 
$\sigma$-algebra of subsets of $Y$, $\mu'$ is a well-defined measure on $(Y,\Sigma')$, the inclusion map $i: Y \rightarrow X$ 
is a measurable function, and $\mu$ is the pushforward of $\mu'$ to $X$.

Recall that if $(Y,\Sigma',\mu')$ is a measure space, $(X,\Sigma)$ is a $\sigma$-algebra, $i : Y \rightarrow X$ is any measurable 
function, and $\mu$ is the pushforward of $\mu'$ to $(X,\Sigma)$, then for each measurable function $f : X \rightarrow \mathbb{R}$, 
$\int f d\mu = \int (f\circ i) d\mu'$ \cite[Theorem 39C]{H}. This is the well-known \it change in variables theorem. \rm

A \it Radon measure \rm on $X(A)$ is a positive measure $\mu$ on the $\sigma$-algebra of Borel sets of $X(A)$ which is locally 
finite and inner regular. Locally finite means that every point has a neighbourhood of finite measure.  Inner regular means each 
Borel set can be approximated from within using a compact set.
\begin{definition} A \it constructibly Radon measure \rm on $X(A)$ is a positive measure $\mu$ on the $\sigma$-algebra of constructibly 
Borel sets of $X(A)$ such that for, each countably generated subalgebra $A'$ of $A$, the pushforward of $\mu$ to $X(A')$ via the
restriction map $\alpha \mapsto \alpha|_{A'}$ is a Radon measure on $X(A')$.
\end{definition}
We are interested here in Radon and constructibly Radon measures having the additional property that $\hat{a}_A$ is $\mu$-integrable 
(i.e., $\int \hat{a}_A d\mu$ is well-defined and finite) for all $a\in A$.
For a linear functional $L : A \rightarrow \mathbb{R}$, one can consider the set of Radon or constructibly Radon measures $\mu$ on 
$X(A)$ such that $L(a) = \int \hat{a}_A  d\mu$ $\forall$ $a\in A$.
The \it moment problem \rm is to understand this set of measures, for a given linear functional $L : A \rightarrow \mathbb{R}$. 
In particular, one wants to know: (i) When is this set non-empty?
(ii) In case it is non-empty, when is it a singleton set?

A linear functional $L : A \rightarrow \mathbb{R}$ is said to be \it positive semidefinite \rm (PSD) if $L(\sum A^2) \subseteq [0,\infty)$ 
and \it positive \rm if $L(\operatorname{Pos}_A(X(A))\subseteq [0,\infty)$.

\section{Three special $\mathbb{R}$-algebras}

Let $A = A_{\Omega}:= \mathbb{R}[x_i \mid i \in \Omega]$, the ring of polynomials in the variables $x_i$, $i\in \Omega$ with 
coefficients in $\mathbb{R}$, $B= B_{\Omega} := \mathbb{R}[x_i, \frac{1}{1+x_i^2} \mid i\in \Omega]$, the localization of $A$ at 
the multiplicative set generated by the $1+x_i^2$, $i\in \Omega$, and 
$C = C_{\Omega} := \mathbb{R}[\frac{1}{1+x_i^2}, \frac{x_i}{1+x_i^2} \mid i\in \Omega]$, the $\mathbb{R}$-subalgebra of $B$ 
generated by the elements $\frac{1}{1+x_i^2}$, $\frac{x_i}{1+x_i^2}$, $i\in \Omega$. Here, $\Omega$ is an arbitrary index set.

By definition, $A$ (resp., $B$, resp., $C$) is the direct limit of the $\mathbb{R}$-algebras $A_I$ (resp., $B_I$, resp., $C_I$),
$I$ running through all finite subsets of $\Omega$. Because of this, many questions about $A$, $B$ and $C$ reduce immediately to 
the case where $\Omega$ is finite. Observe also that if $\Omega$ is finite then $B$ is equal to the localization of $A$ at 
$p : = \prod_{i\in \Omega} (1+x_i^2)$ considered in \cite{M2}. This is clear.

Elements of $X(A)$ and $X(B)$ are naturally identified with point evaluations $f \mapsto f(\alpha)$, $\alpha\in \mathbb{R}^{\Omega}$. 
Note that $X(A) = X(B)=\mathbb{R}^{\Omega}$, not just as sets, but also as topological spaces, giving $\mathbb{R}^{\Omega}$ the 
product topology.
\begin{prop}~\begin{enumerate}
	\item For $f\in B$, the following are equivalent:
	\begin{itemize}
		 \item[(i)]$f\in C$;
		 \item[(ii)] $f$ is \it geometrically bounded, \rm i.e., $\exists$ $k \in \mathbb{N}$ such that 
		 $|f(\alpha)| \le k$ $\forall$ $\alpha \in \mathbb{R}^{\Omega}$;
		 \item[(iii)] $f$ is \it algebraically bounded, \rm i.e., $\exists$ $k\in \mathbb{N}$ such that $k\pm f \in \sum B^2$.
	\end{itemize}
	\item $\sum B^2\cap C = \sum C^2$. In particular, $\sum C^2$ is archimedean.
	\item $C$ is naturally identified (via $y_i \leftrightarrow \frac{1}{1+x_i^2}$ and $z_i \leftrightarrow \frac{x_i}{1+x_i^2}$) 
	with the polynomial algebra $\mathbb{R}[y_i,z_i \mid i\in \Omega]$ factored by the ideal generated by the polynomials 
	$y_i^2+z_i^2-y_i = (y_i-\frac{1}{2})^2+z_i^2-\frac{1}{4}$, $i\in \Omega$. Consequently, $X(C)$ is identified naturally with 
	$\mathbb{S}^{\Omega}$, where $\mathbb{S} := \{ (y,z) \in \mathbb{R}^2 \mid (y-\frac{1}{2})^2+z^2 = \frac{1}{4}\}$.
	\item The restriction map $\alpha \mapsto \alpha|_C$ identifies $X(B)$ with a subspace of $X(C)$. In terms of coordinates, 
	this map is given by $\alpha = (x_i)_{i\in \Omega} \mapsto \beta = (y_i,z_i)_{i\in \Omega}$, where 
	$y_i := \frac{1}{1+x_i^2}$, $z_i:= \frac{x_i}{1+x_i^2}$. In particular, the image of $X(B)$ is dense in $X(C)$.
\end{enumerate}
\end{prop}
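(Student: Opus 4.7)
My plan is to prove parts (3) and (4) first, giving the geometric picture ($X(C)\cong\mathbb{S}^\Omega$ is compact, $X(B)$ embeds as a dense open subspace), then derive the archimedean half of (2) from explicit sum-of-squares identities, then settle (1), and finally close the first half of (2) using (1).

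\emph{Parts (3) and (4).} The assignment $y_i\mapsto\frac{1}{1+x_i^2}$, $z_i\mapsto\frac{x_i}{1+x_i^2}$ defines a surjective $\mathbb{R}$-algebra map $\pi:\mathbb{R}[y_i,z_i\mid i\in\Omega]\twoheadrightarrow C$ that kills each $y_i^2+z_i^2-y_i$ by direct computation. For the reverse inclusion, I would put every element into the normal form $\sum c_{a,b}\,y^az^b$ with $a_i\geq 0$ and $b_i\in\{0,1\}$ (using $z_i^2=y_i-y_i^2$), pass to finite $\Omega=I$, and induct on $|I|$. For $|I|=1$, an identity $\sum c_{a,0}y^a+\sum c_{a,1}y^az=0$ splits into its even and odd parts in $x$; substituting $u=1/(1+x^2)\in(0,1]$ (and first dividing the odd part by $x$) turns each into a polynomial identity in $u$ with infinitely many zeros, forcing all $c_{a,b}=0$. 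The inductive step fixes all variables but $x_j$ and uses that a rational function in $B_{I\setminus\{j\}}$ vanishing on $\mathbb{R}^{I\setminus\{j\}}$ must be zero. The identification $X(C)\cong\mathbb{S}^\Omega$ is then immediate from the relations. For (4), under this identification restriction sends $(x_i)\in\mathbb{R}^\Omega$ to $\bigl(\tfrac{1}{1+x_i^2},\tfrac{x_i}{1+x_i^2}\bigr)_{i\in\Omega}$, with inverse $x_i=z_i/y_i$ on the image $\{\beta\in\mathbb{S}^\Omega:y_i>0 \text{ for all } i\}$; both maps are continuous in the product topologies, and the image is dense because $\mathbb{S}\setminus\{(0,0)\}$ is dense in $\mathbb{S}$ and a product of dense sets is dense in the product topology.

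\emph{Archimedean half of (2).} Verify directly that $1-y_i=(1-y_i)^2+z_i^2$ and $\tfrac{1}{2}\pm z_i=(\tfrac{1}{2}\pm z_i)^2+(y_i-\tfrac{1}{2})^2$ (each an expansion using $y_i^2+z_i^2=y_i$), and note $1+y_i=1+y_i^2+z_i^2\in\sum C^2$. All generators $y_i,z_i$ thus lie in the $\sum C^2$-archimedean subring of $C$, which is an $\mathbb{R}$-subalgebra (standard fact about quadratic modules), so $\sum C^2$ is archimedean.

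\emph{Part (1) and the rest of (2).} (iii)$\Rightarrow$(ii) is trivial; (i)$\Rightarrow$(iii) follows from $\sum C^2\subseteq\sum B^2$ combined with the archimedean statement. The main technical step is (ii)$\Rightarrow$(i). Reducing to finite $I$, part (3) shows that $B_I=C_I[y_i^{-1}]$ admits the $\mathbb{R}$-basis $\{\prod_i y_i^{a_i}z_i^{b_i}:a\in\mathbb{Z}^I,\,b\in\{0,1\}^I\}$, and $f\in C_I$ iff every $a_i\geq 0$ in the expansion. Test one variable at a time: fixing all $x_i$ for $i\neq j$, the expression in $x_j$ decomposes (via its even and odd parts) into a polynomial piece from the terms with $a_j<0$ plus a bounded rational piece from $a_j\geq 0$; boundedness of $f$ forces the polynomial piece to be bounded, and matching coefficients of $x_j^k$ successively forces every $c_{\cdot,\cdot}$ with $a_j<0$ to vanish at the chosen $x_{I\setminus\{j\}}$. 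These coefficients are rational functions in $B_{I\setminus\{j\}}$ vanishing on an open set, hence identically zero; running this for each $j\in I$ yields $a_i\geq 0$ for all $i$, i.e., $f\in C_I$. The main obstacle is organizing this variable-by-variable bookkeeping cleanly, though each step reduces to the clean one-variable observation that a bounded sum $\sum c_a(1+x^2)^{-a}$ has $c_a=0$ for $a<0$. Finally, $\sum B^2\cap C\subseteq\sum C^2$: if $c=\sum b_j^2\in C$ with $b_j\in B$, then $c$ is bounded (it extends continuously to the compact $X(C)=\mathbb{S}^\Omega$), so each $b_j^2\le c$ is bounded, each $b_j$ is bounded, hence $b_j\in C$ by (ii)$\Rightarrow$(i), and $c\in\sum C^2$.
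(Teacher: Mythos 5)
Your proof is correct, and it is worth noting that the paper does not actually prove this proposition at all: its ``proof'' is the single citation ``See [Remark 2.2, M2]'', relying on the remark just before the statement that $A$, $B$, $C$ are direct limits of the finitely generated algebras $A_I$, $B_I$, $C_I$. So your write-up supplies, in a self-contained way, exactly the content that the paper outsources, and the route is essentially the standard localization argument: reduce to finite $I$, identify $C_I$ with $\mathbb{R}[y_i,z_i]/(y_i^2+z_i^2-y_i)$, and exploit the resulting $\mathbb{Z}^I\times\{0,1\}^I$-indexed basis of $B_I=C_I[y_i^{-1}]$. Two small remarks. First, in the key step (ii)$\Rightarrow$(i), the even/odd splitting is not actually needed: for fixed $x'$ the terms $y_j^{a_j}z_j^{b_j}$ with $a_j<0$ are polynomials in $x_j$ of pairwise distinct degrees $2|a_j|$ (if $b_j=0$) or $2|a_j|-1$ (if $b_j=1$), all $\ge 1$, so boundedness of $f$ minus its (bounded) $a_j\ge 0$ part kills the leading coefficient and a downward induction on degree finishes; the resulting coefficients in $B_{I\setminus\{j\}}$ vanish at every point of $\mathbb{R}^{I\setminus\{j\}}$ and hence are zero, as you say. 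Second, your explicit sum-of-squares identities $1-y_i=(1-y_i)^2+z_i^2$, $1+y_i=1+y_i^2+z_i^2$ and $\tfrac12\pm z_i=(\tfrac12\pm z_i)^2+(y_i-\tfrac12)^2$ (all using $y_i^2+z_i^2=y_i$) are a clean way to get archimedeanness of $\sum C^2$ without invoking compactness of $X(C)$, which is the right order of logic since Jacobi's theorem needs archimedeanness as a hypothesis; just state explicitly that the reverse inclusion $\sum C^2\subseteq\sum B^2\cap C$ is trivial so that (2) is an equality. With those cosmetic points addressed, the argument is complete.
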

\begin{proof} 
See \cite[Remark 2.2]{M2}.
\end{proof}
\begin{prop}\label{positivstellensatz}~
\begin{itemize}
	\item[(1)] For $f\in C$, $\hat{f}_C\ge0$ on $X(C)$ iff $f+\epsilon \in \sum C^2$ $\forall$ real $\epsilon >0$.
	\item[(2)] For $f\in B$, $\hat{f}_B \ge 0$ on $X(B)$ iff $\exists$ $q$ of the form $q= \prod_{k=1}^n (1+x_{i_k}^2)^{\ell_k}$, 
	where $x_{i_1},\dots,x_{i_n}$ are the variables appearing in $f$, such that $f+\epsilon q \in \sum B^2$, $\forall$ real $\epsilon >0$.
\end{itemize}
\end{prop}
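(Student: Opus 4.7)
The plan is to prove (1) as an immediate application of Jacobi's theorem (Proposition \ref{jacobi}), then to reduce the nontrivial direction of (2) to (1) by dividing $f$ by a suitable $q$ so that the quotient lies in $C$.

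For (1): The previous proposition shows that $\sum C^2$ is archimedean in $C$, and since $\sum C^2$ is the smallest quadratic module of $C$, we have $X_{\sum C^2} = X(C)$. Applying Proposition \ref{jacobi} with $M = \sum C^2$ and $a = f$ gives exactly the claimed equivalence.

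The $(\Leftarrow)$ direction of (2) is immediate: evaluating $f + \epsilon q \in \sum B^2$ at any $\alpha \in X(B)$ gives $\hat f_B(\alpha) + \epsilon\, \hat q_B(\alpha) \ge 0$, and since $\hat q_B(\alpha) > 0$ is finite, letting $\epsilon \to 0$ yields $\hat f_B(\alpha) \ge 0$. For the $(\Rightarrow)$ direction, $f$ involves only finitely many variables $x_{i_1},\ldots,x_{i_n}$, so we may work entirely in $A_I \subset B_I \subset C_I$ with $I = \{i_1,\ldots,i_n\}$. Write $f = g / \prod_k (1+x_{i_k}^2)^{m_k}$ with $g \in A_I$. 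A direct monomial-by-monomial estimate using $|x_i|^a \le (1+x_i^2)^a$ shows that, for all sufficiently large integers $\ell_1,\ldots,\ell_n$, the element $\tilde f := f/q$ with $q := \prod_k (1+x_{i_k}^2)^{\ell_k}$ is bounded on $\mathbb{R}^I$, hence geometrically bounded, and therefore lies in $C_I$ by the equivalence (i)$\Leftrightarrow$(ii) of the previous proposition. Because $\hat q_B > 0$ and $\hat f_B \ge 0$ on $X(B)$, we have $\hat{\tilde f}_B \ge 0$ on $X(B)$; density of $X(B)$ in $X(C)$ (part (4) of the previous proposition) together with continuity of $\hat{\tilde f}_C$ then forces $\hat{\tilde f}_C \ge 0$ on $X(C)$. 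Applying (1) to $\tilde f \in C$ gives $\tilde f + \epsilon \in \sum C^2$ for every real $\epsilon > 0$, and hence
\[
f + \epsilon q \;=\; q(\tilde f + \epsilon) \;\in\; q \cdot \sum C^2 \;\subseteq\; \sum B^2,
\]
where the last inclusion follows from $q \in \sum A^2 \subseteq \sum B^2$ (each $1+x_i^2$ is a sum of squares and $\sum B^2$ is multiplicatively closed) together with $\sum C^2 \subseteq \sum B^2$.

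The step requiring the most care is the reduction: one must verify that the exponents $\ell_k$ can indeed be chosen large enough so that $f/q$ is bounded, which is a routine but concrete degree-counting argument on $g$. Once this is in place, the remainder combines the equivalence of algebraic and geometric boundedness, the density of $X(B)$ in $X(C)$, and the closure properties of $\sum B^2$, all of which have already been recorded in the previous proposition.
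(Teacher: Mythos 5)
Your proof is correct and follows essentially the same route as the paper: part (1) is Jacobi's theorem applied to the archimedean quadratic module $\sum C^2$, and part (2) reduces to (1) by choosing $q$ so that $f/q\in C$, using density of $X(B)$ in $X(C)$ to pass nonnegativity to $X(C)$, and then clearing the denominator via $q\in\sum A^2$. The only difference is that you spell out details (the degree-counting for boundedness of $f/q$, the density/continuity step, the inclusion $q\cdot\sum C^2\subseteq\sum B^2$) that the paper leaves implicit.
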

\begin{proof} (1) Since $\sum C^2$ is archimedean, this is immediate from proposition \ref{jacobi}.
(2) If $f\in B$, say $f\in B_{\{i_1,\dots,i_n\}}$, there exists an element $q$ of the form $q = \prod_{k=1}^n (1+x_{i_k}^2)^{\ell_k}$ 
such that $\frac{f}{q}\in C$. Thus, if $f \ge 0$ on $X(B)$ then $\frac{f}{q}\ge 0$ on $X(C)$ so $\frac{f}{q}+\epsilon \in \sum C^2$ and,
consequently, $f+\epsilon q \in \sum B^2$, $\forall$ real $\epsilon >0$.
\end{proof}
It follows from Proposition \ref{positivstellensatz} that linear functionals $L : C \rightarrow \mathbb{R}$, resp., 
$L: B \rightarrow \mathbb{R}$, are PSD iff they are positive. For linear functionals $L : A \rightarrow \mathbb{R}$ this is never 
the case, except when $|\Omega| \le 1$; see \cite{BCJ} \cite{S0}.
\begin{prop}\label{general extendibility}
Suppose $L : A \rightarrow \mathbb{R}$ is a linear functional and $L(\operatorname{Pos}_A(Y)) \subseteq [0,\infty)$ for some 
closed set $Y \subseteq \mathbb{R}^{\Omega}$. Then $L$ extends to a linear functional $L : B \rightarrow \mathbb{R}$ such that 
$L(\operatorname{Pos}_B(Y)) \subseteq [0,\infty)$.
\end{prop}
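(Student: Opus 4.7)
The plan is to invoke the M.~Riesz extension theorem for positive linear functionals on an ordered vector space, applied with ambient real vector space $E:=B$, subspace $F:=A$, and convex cone $K:=\operatorname{Pos}_B(Y)\subseteq B$. Because $X(A)=X(B)=\mathbb{R}^{\Omega}$ and $\hat{a}_A(\alpha)=\alpha(a)=\hat{a}_B(\alpha)$ for every $a\in A$, one has
\[
K\cap F \;=\; \operatorname{Pos}_B(Y)\cap A \;=\; \operatorname{Pos}_A(Y),
\]
so the hypothesis $L(\operatorname{Pos}_A(Y))\subseteq[0,\infty)$ says exactly that $L:F\to\mathbb{R}$ is $K$-positive. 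Thus it only remains to verify the majorization condition required by the Riesz theorem: for every $b\in E$ there exists $a\in F$ with $a-b\in K$.

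To check majorization, I would write an arbitrary $b\in B$ as $b=f/q$ with $f\in A$ and $q=\prod_{k=1}^{n}(1+x_{i_k}^{2})^{\ell_k}$, and take
\[
a \;:=\; 1+f^{2}\;\in\; A.
\]
Since $q\ge 1$ and $a\ge 1>0$ pointwise on $\mathbb{R}^{\Omega}$, one has $aq\ge a$ pointwise, and therefore
\[
aq-f\;\ge\;a-f\;=\;\bigl(f-\tfrac{1}{2}\bigr)^{2}+\tfrac{3}{4}\;>\;0
\]
everywhere on $\mathbb{R}^{\Omega}$. Dividing by $q>0$ gives $a-b=(aq-f)/q\ge 0$ on all of $\mathbb{R}^{\Omega}$, and in particular on $Y$, so $a-b\in\operatorname{Pos}_B(Y)=K$. (Note that closedness of $Y$ plays no role in this step; the estimate is pointwise on the entire character space.)

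With both hypotheses verified, the M.~Riesz extension theorem produces a linear functional $\tilde{L}:B\to\mathbb{R}$ extending $L$ with $\tilde{L}(\operatorname{Pos}_B(Y))\subseteq[0,\infty)$, which is the desired conclusion. No genuine obstacle arises here: the whole argument collapses to the elementary observation that every denominator occurring in $B$ is bounded below by $1$ on $\mathbb{R}^{\Omega}$, so a single polynomial in $A$ (indeed the specific choice $1+f^{2}$) uniformly dominates any fraction $f/q\in B$, and the rest is the standard cone Hahn--Banach/Zorn machinery packaged in the Riesz theorem.
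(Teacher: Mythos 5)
Your proof is correct. The cofinality check is the heart of it and it works: every $b\in B$ is $f/q$ with $f\in A$ and $q$ a product of factors $(1+x_{i}^2)^{\ell}\ge 1$, the identification $X(A)=X(B)=\mathbb{R}^{\Omega}$ gives $\operatorname{Pos}_B(Y)\cap A=\operatorname{Pos}_A(Y)$, and $a:=1+f^2$ satisfies $a-f=(f-\tfrac12)^2+\tfrac34>0$ and $aq\ge a$, so $a-b\ge 0$ on all of $\mathbb{R}^{\Omega}$; the M.~Riesz extension theorem (with $F=A$ cofinal in $E=B$ for the cone $K=\operatorname{Pos}_B(Y)$) then delivers the extension. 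The paper takes a slightly different route: following the Zorn's lemma argument of \cite[Theorem 3.1]{M1}, it first extends $L$ to a positive linear functional $\overline{L}$ on the algebra $C'(Y)$ of \emph{all} continuous functions on $Y$ dominated by some $|\hat{a}|$, $a\in A$, and then restricts $\overline{L}$ along $b\mapsto\hat{b}|_Y$, using the same elementary fact that denominators in $B$ are bounded below by $1$ so that $|\hat b|\le|\hat a|$ on $Y$. The two arguments are the same cone-positive Hahn--Banach extension in different clothing; yours is more economical because it never leaves $B$ and avoids having to check that $L$ factors through restriction to $Y$ (which the paper's construction implicitly requires and which follows from positivity since $\hat{a}|_Y=0$ forces $L(a)=0$), while the paper's detour produces a stronger intermediate object --- a positive functional on a whole function algebra over $Y$ --- in the spirit of the machinery it reuses from \cite{M1}. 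You are also right that closedness of $Y$ is not needed for your pointwise estimate; it matters only for interpreting $C'(Y)$ and for the later measure-theoretic statements, not for this extension step.
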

Note: The extension is not unique, in general.
\begin{proof}
The proof is a simple modification of the Zorn's lemma argument in \cite[Theorem 3.1]{M1}. Denote by $C'(Y)$ the $\mathbb{R}$-algebra 
of all continuous functions $f : Y \rightarrow \mathbb{R}$ which are bounded by some $\hat{a}$, $a\in A$ in the sense that 
$|f|\le |\hat{a}|$ on $Y$. As in the proof of \cite[Theorem 3.1]{M1} $\exists$ a linear functional 
$\overline{L} : C'(Y) \rightarrow \mathbb{R}$ which is positive (i.e., $\forall$ $f\in C'(Y)$, $f\ge 0$ on $Y$ $\Rightarrow$ 
$\overline{L}(f)\ge 0$) such that $L(a)=\overline{L}(\hat{a}|_Y)$ $\forall$ $a\in A$. If $b\in B$ then 
\[
	b = \frac{a}{(1+x_{i_1}^2)^{\ell_1}\cdots (1+x_{i_n}^2)^{\ell_n}}
\]
for some $a\in A$, $n\ge 1$, $i_k \in \Omega$, $\ell_k \ge 0$, $k=1,\dots,n$. Since $1+\alpha^2 \ge 1$ $\forall$ $\alpha \in \mathbb{R}$ 
it follows that $|\hat{b}| \le |\hat{a}|$ on $Y$, i.e., $\hat{b}|_Y \in C'(Y)$ $\forall$ $b\in B$. Define $L: B \rightarrow \mathbb{R}$ by 
$L(b)=\overline{L}(\hat{b}|_Y)$.
\end{proof}
\begin{prop}\label{extendibility} 
For a linear functional $L : A \rightarrow \mathbb{R}$ the following are equivalent:
\begin{itemize}
	\item[(1)] $L$ is a positive linear functional.
	\item[(2)] $L$ extends to a positive (i.e., PSD) linear functional $L : B \rightarrow \mathbb{R}$.
	\item[(3)] $\forall$ $f\in A$ and $\forall$ $p$ of the form $p= \prod_{k=1}^n (1+x_{i_k}^2)^{\ell_k}$, where 
	$x_{i_1},\dots,x_{i_n}$ are the variables appearing in $f$ and $\ell_k \ge 0$, $k=1,\dots,n$, $pf \in \sum A^2$ $\Rightarrow$ $L(f)\ge 0$.
\end{itemize}
\end{prop}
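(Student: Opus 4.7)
The plan is to prove $(1) \Leftrightarrow (2)$ and $(1) \Leftrightarrow (3)$ separately; the equivalence $(1) \Leftrightarrow (2)$ and the direction $(1) \Rightarrow (3)$ reduce to machinery already developed in the paper, while $(3) \Rightarrow (1)$ is the substantive step.

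For $(1) \Rightarrow (2)$, I would apply Proposition \ref{general extendibility} with $Y := X(A) = \mathbb{R}^{\Omega}$: condition $(1)$ is exactly the hypothesis $L(\operatorname{Pos}_A(Y)) \subseteq [0,\infty)$, and the conclusion furnishes an extension $L : B \to \mathbb{R}$ with $L(\operatorname{Pos}_B(\mathbb{R}^{\Omega})) \subseteq [0,\infty)$, i.e., $L$ is positive on $B$; by the remark following Proposition \ref{positivstellensatz}, positive and PSD coincide on $B$, so $(2)$ holds. For $(2) \Rightarrow (1)$: a PSD extension on $B$ is automatically positive on $B$ (same remark), so any $f \in A$ with $\hat{f}_A \ge 0$ on $X(A) = X(B)$ satisfies $L(f) \ge 0$. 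For $(1) \Rightarrow (3)$: if $pf \in \sum A^2$ with $p$ a product of $(1+x_{i_k}^2)^{\ell_k}$, then $pf \ge 0$ on $\mathbb{R}^{\Omega}$, and since $p$ is strictly positive pointwise, $f \ge 0$ on $\mathbb{R}^{\Omega}$, so $L(f) \ge 0$ by $(1)$.

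The heart of the argument is $(3) \Rightarrow (1)$. Given $f \in A$ with $\hat{f}_A \ge 0$ on $\mathbb{R}^{\Omega}$, I would invoke Proposition \ref{positivstellensatz}(2), viewing $f$ as an element of $B$, to obtain a single $p = \prod_{k=1}^n(1+x_{i_k}^2)^{\ell_k}$, with variables among those of $f$, such that $f + \epsilon p \in \sum B^2$ for every real $\epsilon > 0$. The next task is to repackage this as a relation in $\sum A^2$ whose multiplier is admissible for $(3)$. Let $J$ be the finite set of variables actually appearing in $f + \epsilon p$. The evaluation map $\phi : B \to B_J$ defined by $x_i \mapsto 0$ for $i \notin J$ is a well-defined ring homomorphism (sending $1/(1+x_i^2) \mapsto 1$ for $i \notin J$) that fixes $B_J$ and preserves sums of squares, so applying $\phi$ to any identity $f + \epsilon p = \sum_j b_j^2$ in $B$ yields $f + \epsilon p \in \sum B_J^2$. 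A standard clearing of denominators (each $b_j \in B_J$ has the form $a_j / q_j$ with $a_j \in A_J$ and $q_j$ a product of $(1 + x_i^2)$-powers with $i \in J$) then produces a $q$ of the same form with $q^2(f + \epsilon p) \in \sum A_J^2 \subseteq \sum A^2$. Since the variables of $q^2$ lie in $J$, which equals the variable set of $f + \epsilon p$, condition $(3)$ applies with $f + \epsilon p$ and $q^2$ in the roles of its $f$ and $p$, giving $L(f + \epsilon p) \ge 0$, i.e., $L(f) \ge -\epsilon L(p)$. Letting $\epsilon \to 0^+$ yields $L(f) \ge 0$.

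The main obstacle I anticipate is precisely the variable-restriction clause in $(3)$: the multiplier must use only variables appearing in $f$, and cancellation in $f + \epsilon p$ could shrink the effective variable set to a proper subset $J$ of the variables of $f$. Retracting all the way down to $B_J$ via $\phi$, rather than stopping at $B_I$ for $I$ equal to the variable set of $f$, is what keeps the produced $q^2$ admissible for $(3)$.
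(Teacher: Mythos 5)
Your proof is correct and follows essentially the same route as the paper: the substantive implication $(3)\Rightarrow(1)$ is handled exactly as in the text via Proposition \ref{positivstellensatz}(2), clearing denominators, and letting $\epsilon\to 0^{+}$, while your reorganization of the easy implications (proving $(1)\Leftrightarrow(2)$ and $(1)\Leftrightarrow(3)$ rather than the paper's cycle $(1)\Rightarrow(2)\Rightarrow(3)\Rightarrow(1)$) changes nothing essential. Your extra care with the retraction $\phi$, which keeps the denominator-clearing multiplier supported on the variables of $f+\epsilon p$, addresses a bookkeeping point the paper's proof passes over silently and is a refinement rather than a deviation.
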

\begin{proof} 
(1) $\Rightarrow$ (2). Apply Proposition \ref{general extendibility} with $Y = \mathbb{R}^{\Omega}$.

(2) $\Rightarrow$ (3). Since $pf \in \sum A^2$, it follows that $p^2f \in \sum A^2$, so $f \in \sum B^2$. Since the extension of 
$L$ to $B$ is PSD this implies $L(f)\ge 0$.

(3) $\Rightarrow$ (1). Suppose $f\in A$, $\hat{f}\ge 0$. By Proposition \ref{positivstellensatz} (2) 
$\exists$ $q = \prod_{k=1}^n (1+x_{i_k}^2)^{\ell_k}$, where $x_{i_1},\dots,x_{i_n}$ are the variables appearing in $f$, such that 
$f+\epsilon q \in \sum B^2$ $\forall$ $\epsilon > 0$. Clearing denominators, $p^2(f+\epsilon q)\in \sum A^2$ for some $p$ 
(depending on $\epsilon$) of the form $p= \prod_{k=1}^n (1+x_{i_k}^2)^{m_k}$. By (3), $L(f)+\epsilon L(q)=L(f+\epsilon q)\ge 0$. 
Since $\epsilon>0$ is arbitrary, this implies $L(f)\ge 0$.
\end{proof}
PSD linear functionals $L : B \rightarrow \mathbb{R}$ restrict to PSD linear functionals on $C$. PSD linear functionals 
$L : C \rightarrow \mathbb{R}$ are in natural one-to-one correspondence with Radon measures $\mu$ on the compact space $X(C)$ via 
$L \leftrightarrow \mu$ iff $L(f) = \int \hat{f}_C d\mu$ $\forall$ $f\in C$. This is well-known, e.g., see \cite[Corollary 3.3 and 
Remark 3.5]{M1}.

For $i \in \Omega$, let $\Delta_i := \{ \beta \in X(C) \mid \beta(\frac{1}{1+x_i^2})=0\}.$ Because of the way $X(C)$ is being 
identified with $\mathbb{S}^{\Omega}$, $\Delta_i$ is identified with the set 
\[
	\{ (y_j,z_j)_{j\in \Omega} \in \mathbb{S}^{\Omega} \mid y_i= z_i =0\}.
\]
It is clear that $X(C)\backslash X(B)$ is the union of the sets $\Delta_i$, $i\in \Omega$.
For each $f\in B$ one can associate a continuous  function 
\[
	\tilde{f} : X(C)\backslash (\Delta_{i_1}\cup \dots \cup \Delta_{i_n}) \rightarrow \mathbb{R},
\]
where $x_{i_1},\dots,x_{i_n}$ are the variables appearing in $f$. Observe that $f \in B_{\{i_1,\dots,i_n\}}$. 
Define $\tilde{f} = \hat{f}_{B_{\{i_1,\dots,i_n\}}}\circ \pi$ where $\pi : X(C) \rightarrow X(C_{\{i_1,\dots,i_n\}})$ is the 
restriction map. Observe that the inverse image under $\pi$ of the set $X(C_{\{i_1,\dots,i_n\}})\backslash X(B_{\{i_1,\dots,i_n\}})$ 
is precisely the set $\Delta_{i_1}\cup \dots \cup \Delta_{i_n}$. Note also that $\tilde{f}|_{X(B)} = \hat{f}_B$.
\begin{prop}\label{main} 
For each PSD linear functional $L : B \rightarrow \mathbb{R}$ there exists a unique Radon measure $\mu$ on $X(C)$ such that 
$L(f) = \int \hat{f}_C d\mu$ $\forall$ $f\in C$. This satisfies $\mu(\Delta_i)=0$ $\forall$ $i\in \Omega$ and 
$L(f) = \int \tilde{f} d\mu$ $\forall$ $f\in B$.
\end{prop}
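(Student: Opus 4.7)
The plan proceeds in three stages. For existence and uniqueness of $\mu$, I simply restrict $L$ to $C$: this is a PSD functional on $C$, and since $\sum C^2$ is archimedean by Proposition~\ref{positivstellensatz}, the Riesz-type correspondence recalled just before the statement produces a unique Radon measure $\mu$ on the compact space $X(C)$ with $L(f)=\int\hat f_C\,d\mu$ for all $f\in C$. In particular $\mu(X(C))=L(1)<\infty$, so $\mu$ is a finite Radon measure.

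The core of the proof is showing $\mu(\Delta_i)=0$ for every $i\in\Omega$, and for this I would construct a telescoping identity in $B$. Put $y_i:=\tfrac{1}{1+x_i^2}\in C$ and, for $n\ge 0$,
\[
f_n := (1-y_i)^n = \frac{x_i^{2n}}{(1+x_i^2)^n}\in C,\qquad g_n := x_i^2 f_n = \frac{x_i^{2n+2}}{(1+x_i^2)^n}\in B.
\]
Using $x_i^2 y_i = 1-y_i$ one checks $g_n-g_{n+1}=x_i^2(1-y_i)^n y_i=(1-y_i)^{n+1}=f_{n+1}$, so $\sum_{n=1}^{N+1} f_n = x_i^2 - g_{N+1}$. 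Each $f_n$ is nonnegative on $X(C)$, hence $L(f_n)=\int f_n\,d\mu\ge 0$. Moreover $g_{N+1}=x_i^{2N+4}/(1+x_i^2)^{N+1}$ is a sum of squares in $B$: when $N+1=2m$ it equals $\bigl(x_i^{2m+1}/(1+x_i^2)^m\bigr)^2$, and when $N+1=2m+1$, multiplying numerator and denominator by $1+x_i^2$ displays it as $\bigl(x_i^{2m+3}/(1+x_i^2)^{m+1}\bigr)^2+\bigl(x_i^{2m+4}/(1+x_i^2)^{m+1}\bigr)^2$. Since $L$ is PSD on $B$, $L(g_{N+1})\ge 0$, giving
\[
\sum_{n=1}^{N+1} L(f_n) = L(x_i^2) - L(g_{N+1}) \le L(x_i^2) < \infty,
\]
so $L(f_n)\to 0$. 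On $X(C)=\mathbb S^\Omega$ one has $0\le f_n\le 1$ and $f_n\to \mathbf 1_{\Delta_i}$ pointwise (on $\Delta_i$, $y_i=0$ and $f_n\equiv 1$; off $\Delta_i$, $y_i>0$ so $(1-y_i)^n\to 0$). Dominated convergence against the finite measure $\mu$ then yields $\mu(\Delta_i)=\lim_n\int f_n\,d\mu=\lim_n L(f_n)=0$.

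The integration formula for $f\in B$ follows by reduction to finitely many variables. Fix a finite set $I\subseteq\Omega$ containing all variables of $f$ and let $\pi:X(C)\to X(C_I)$ be the restriction map. It is continuous between compact Hausdorff spaces, so $\pi_*\mu$ is Radon on $X(C_I)$, and change of variables gives $\int\hat g_{C_I}\,d(\pi_*\mu)=\int\hat g_C\,d\mu=L(g)$ for all $g\in C_I$. By the uniqueness in stage one, $\pi_*\mu$ is the measure $\mu_I$ associated to $L|_{B_I}$. The finitely-many-variables version of the assertion, which is \cite{M2}, then gives $L(f)=\int\tilde f_I\,d\mu_I$, and a second application of the change of variables theorem together with the identity $\tilde f=\tilde f_I\circ\pi$ (valid off the $\mu$-null set $\bigcup_{i\in I}\Delta_i$) produces $L(f)=\int\tilde f\,d\mu$.

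The main obstacle is spotting the right telescoping identity, together with the observation that the leftover term $g_{N+1}$ is manifestly a sum of squares in $B$ (with the parity cases handled by one auxiliary multiplication by $1+x_i^2$); once that is in place, dominated convergence handles the support statement, and the final formula is a routine pushforward argument reducing to the finite-variable setting.
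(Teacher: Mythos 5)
Your argument is correct, and for the key claim $\mu(\Delta_i)=0$ it takes a genuinely different route from the paper. The paper handles everything by reduction to a finite index set $I$ and citation of the finite-dimensional localization results of \cite{M1} (Corollaries 3.3 and 3.4 and Remark 3.5): the Radon measure $\nu_I$ on $X(B_I)$ representing $L|_{B_I}$ is produced there, $\mu_I$ is identified with the pushforward of $\nu_I$ by uniqueness, and $\mu(\Delta_{i_1}\cup\dots\cup\Delta_{i_n})=\mu_I(X(C_I)\setminus X(B_I))=0$ is simply read off. You instead prove $\mu(\Delta_i)=0$ by a self-contained computation inside $B$: the telescoping identity $\sum_{n=1}^{N+1}(1-y_i)^n=x_i^2-g_{N+1}$ together with $g_{N+1}\in\sum B^2$ forces $L((1-y_i)^n)\to 0$, and dominated convergence against the finite measure $\mu$ finishes. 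This is in effect an unwinding, in the present special case, of the argument behind \cite[Corollary 3.4]{M1}; what it buys is that the support statement no longer depends on that citation, although your last step (establishing $L(f)=\int\tilde f\,d\mu$ by identifying $\pi_*\mu$ with $\mu_I$ via uniqueness and the change-of-variables theorem) still leans on the finite-variable result exactly as the paper does. One arithmetic slip to fix: in the odd case $N+1=2m+1$ one has $g_{N+1}=x_i^{4m+4}/(1+x_i^2)^{2m+1}$, and the correct two-square decomposition is $\bigl(x_i^{2m+2}/(1+x_i^2)^{m+1}\bigr)^2+\bigl(x_i^{2m+3}/(1+x_i^2)^{m+1}\bigr)^2$; the exponents you wrote produce $x_i^{4m+6}/(1+x_i^2)^{2m+1}$ instead. (You could also avoid the parity split entirely: $1-y_i=x_i^2y_i$ and $y_i=\bigl(\tfrac{1}{1+x_i^2}\bigr)^2+\bigl(\tfrac{x_i}{1+x_i^2}\bigr)^2\in\sum C^2$, so $g_{N+1}=x_i^2(1-y_i)^{N+1}\in\sum B^2$ because $\sum B^2$ is closed under products.)
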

\begin{proof} 
Fix a finite subset $I =\{ i_1,\dots, i_n\}$ of $\Omega$. By \cite[Corollary 3.3]{M1} there exists a Radon measure $\mu$ on 
$X(C)$ and a Radon measure $\mu_I$ on $X(C_I)$ such that $L(f) = \int \hat{f}_C d\mu$ $\forall$ $f \in C$ and 
$L(f) = \int \hat{f}_{C_I} d\mu_I$ $\forall$ $f \in C_I$. Applying \cite[Corollary 3.4]{M1} with $p = (1+x_{i_1}^2)\dots (1+x_{i_n}^2)$, 
there exists a Radon measure $\nu_I$ on $X(B_I)$ such that $L(f) = \int \hat{f}_{B_I} d\nu_I$ $\forall$ $f\in B_I$. 
By \cite[Remark 3.5]{M1} the measures $\mu$, $\mu_I$, $\nu_I$ are unique. Denote by $\mu_I'$ the pushforward of $\mu$ to $X(C_I)$ 
by the restriction map $\pi : X(C) \rightarrow X(C_I)$. Since $\hat{f}_C = \hat{f}_{C_I}\circ \pi$ $\forall$ $f\in C_I$, it follows 
that $\int \hat{f}_{C_I} d\mu_I' = \int \hat{f}_C d\mu = L(f)$ $\forall$ $f\in C_I$, so uniqueness of $\mu_I$ implies $\mu_I' = \mu_I$. 
A similar argument shows that $\mu_I$ is the pushforward of $\nu_I$ via the natural embedding $X(B_I) \hookrightarrow X(C_I)$. 
It follows that $\mu(\Delta_{i_1} \cup \dots \cup \Delta_{i_n}) = \mu_I(X(C_I)\backslash X(B_I)) =0$. Since $I$ is an arbitrary 
finite subset of $\Omega$, this implies $\mu(\Delta_i)=0$ $\forall$ $i\in \Omega$. Suppose now that $f \in B_I$. 
Since $\tilde{f} = \hat{f}_{B_I}\circ \pi$,  $\int \tilde{f} d\mu = \int \hat{f}_{B_I} d\nu_I = L(f)$ as required.
\end{proof}
One would like to know when there exists a Radon measure $\nu$ on $X(B)$ such that $L(f) = \int \hat{f}_B d\nu$ $\forall$ $f\in B$.
\begin{prop}\label{addendum}
Let $L$ be a PSD linear functional on $B$ and let be $\mu$ be the Radon measure on $X(C)$ associated to $L$. 
The following are equivalent:
\begin{itemize}
	\item[(1)] $\exists$ a Radon measure $\nu$ on $X(B)$ such that $L(f) = \int \hat{f}_B d\nu$ $\forall$ $f\in B$.
	\item[(2)] $\forall$ Borel sets $E$ of $X(C)$, $\mu(E) = \sup\{ \mu(K) \mid K \text{ compact, } K\subseteq X(B)\cap E\}$.
	\item[(3)] $\mu(X(C)) = \sup\{ \mu(K) \mid K \text{ compact, } K \subseteq X(B)\}$.
	\item[(4)] $\mu$ is supported by a Borel set $E$ of $X(C)$ such that $E \subseteq X(B)$.
\end{itemize}
Moreover, if this is the case, then $\nu(E) = \sup\{ \mu(K) \mid K \text{ compact, } K\subseteq E\}$ $\forall$ Borel sets 
$E$ of $X(B)$. In particular, $\nu$ is uniquely determined by $\mu$.
\end{prop}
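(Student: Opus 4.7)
The plan is to establish the cycle $(1)\Rightarrow(2)\Rightarrow(3)\Rightarrow(4)\Rightarrow(1)$, then derive the explicit formula for $\nu$ and its uniqueness. For $(1)\Rightarrow(2)$, push $\nu$ forward to $X(C)$ via the inclusion $\iota\colon X(B)\hookrightarrow X(C)$, setting $\mu'(E):=\nu(E\cap X(B))$ for Borel $E\subseteq X(C)$. Since $\nu(X(B))=L(1)<\infty$, $\mu'$ is finite; and since any compact subset of $X(B)$ remains compact in $X(C)$, inner regularity of $\nu$ transfers to $\mu'$, making $\mu'$ Radon on $X(C)$. As $\int\hat f_C\,d\mu'=\int\hat f_B\,d\nu=L(f)$ for $f\in C$, the uniqueness clause of Proposition \ref{main} forces $\mu'=\mu$; combined with inner regularity of $\nu$ this yields (2). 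The step $(2)\Rightarrow(3)$ is immediate via $E=X(C)$. For $(3)\Rightarrow(4)$, pick an increasing sequence of compacts $K_n\subseteq X(B)$ with $\mu(K_n)\to\mu(X(C))$; the set $E:=\bigcup_n K_n$ is then Borel in $X(C)$, contained in $X(B)$, of full $\mu$-mass, and hence supports $\mu$.

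The main work is $(4)\Rightarrow(1)$. Given a Borel $E\subseteq X(B)$ supporting $\mu$, define $\nu$ on Borel $F\subseteq X(B)$ by $\nu(F):=\mu(F\cap E)$. This is well-posed because the Borel $\sigma$-algebras of $E$ as a subspace of $X(B)$ and of $X(C)$ coincide, and $E$ is itself Borel in $X(C)$, so $F\cap E$ is Borel in $X(C)$. Inner regularity of $\mu$, combined with the fact that compact subsets of $E$ remain compact in $X(B)$, yields inner regularity of $\nu$; together with $\nu(X(B))=\mu(E)<\infty$ this makes $\nu$ Radon. The change-of-variables theorem gives $\int\hat f_B\,d\nu=\int_E\hat f_B|_E\,d\mu$; since $\mu$ is supported on $E\subseteq X(B)$ and $\tilde f|_{X(B)}=\hat f_B$, this equals $\int\tilde f\,d\mu=L(f)$ by Proposition \ref{main}.

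The formula $\nu(E')=\sup\{\mu(K):K\subseteq E',\,K\text{ compact}\}$ then follows by applying inner regularity of $\mu$ to $\nu(E')=\mu(E'\cap E)$ and observing that for any compact $K\subseteq E'$ we have $\mu(K)=\mu(K\cap E)$, because $\mu$ is supported on $E$. Uniqueness of $\nu$ is a consequence of $(1)\Rightarrow(2)$: two candidate measures must have the same pushforward $\mu$ on $X(C)$, and every Borel subset of $X(B)$ has the form $E_0\cap X(B)$ for some Borel $E_0\subseteq X(C)$. The most delicate points will be verifying that the pushforward $\mu'$ in $(1)\Rightarrow(2)$ is genuinely Radon (so that uniqueness in Proposition \ref{main} applies), and the Borel bookkeeping in defining $\nu$, since $X(B)$ itself need not be Borel in $X(C)$ when $\Omega$ is uncountable.
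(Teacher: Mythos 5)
Your proof is correct and follows essentially the same route as the paper: the cycle $(1)\Rightarrow(2)\Rightarrow(3)\Rightarrow(4)\Rightarrow(1)$ with the pushforward-plus-uniqueness argument, the countable union of compacts, and the definition of $\nu$ from the supporting Borel set. You are in fact slightly more careful than the paper at the two points you flag (verifying that the pushforward is Radon before invoking uniqueness, and the trace Borel $\sigma$-algebra bookkeeping when $X(B)$ need not be Borel in $X(C)$), which is welcome but does not change the argument.
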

\begin{proof} Assume (1).
Denote by $\mu'$ the pushforward of $\nu$ to $X(C)$.
Then, $\forall$ $f\in C$, $\int \hat{f}_C d\mu' = \int \hat{f}_B d\nu = L(f)$.
Uniqueness of $\mu$ implies $\mu' = \mu$. Since $\nu$ is Radon, (2) is now clear. (2) $\Rightarrow$ (3) is obvious. Assume (3). 
Define $E = \cup_{n\ge 1} K_n$ where $K_n$ is a compact subset of $X(B)$ such that $\mu(X(C)\backslash K_n) \le \frac{1}{n}$. 
Clearly $E \subseteq X(B)$, $E$ is a Borel set of $X(C)$ and $\mu$ is supported by $E$. This proves (4).
Assume (4). Then $\nu$ defined by $\nu(E'\cap X(B)) = \mu(E')$ $\forall$ Borel sets $E'$ of $X(C)$ is a Radon measure on $X(B)$. 
Since $\mu$ is the pushforward of $\nu$ to $X(C)$, $\int \hat{f}_B d\nu = \int \tilde{f} d\mu = L(f)$, so (1) is clear.
The last assertion is clear.
\end{proof}
\begin{rem}\label{question} 
If $\Omega$ is countable then $X(C)\backslash X(B) = \cup_{i\in \Omega} \Delta_i$ is a Borel set of measure zero, so the equivalent 
conditions of Proposition \ref{addendum} hold in this case. We know of no example where the conditions of Proposition \ref{addendum} fail. 
It would be nice to have an example.\footnotemark\footnotetext{If we assume $\Omega$ is uncountable then it is easy enough to 
construct a Radon measure $\mu$ on $X(C)$ so that the equivalent conditions (2) and (3) fail. This is not a problem. The problem is to 
choose such a $\mu$ so that, in addition, $\int \tilde{f} d\mu$ is well-defined and finite for all $f \in B$.}
\end{rem}

It seems probable that, to handle the most general case, one has to relax the requirement that $\nu$ be Radon, requiring only that 
$\nu$ be constructibly Radon.
\begin{prop}\label{new}
There is a canonical one-to-one correspondence $L \leftrightarrow \nu$ given by $L(f) = \int \hat{f}_B d\nu$ $\forall$ $f\in B$ 
between PSD linear functionals $L$ on $B$ and constructibly Radon measures $\nu$ on $X(B)$ with the property that $\hat{f}_B$ is 
$\nu$-integrable $\forall$ $f\in B$.
\end{prop}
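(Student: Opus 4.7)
The plan is to reduce the construction of $\nu$ to Proposition \ref{main} applied to each countable subset of variables, and then patch the pieces together. The direction $\nu \mapsto L$ is immediate: if $\nu$ is constructibly Radon on $X(B)$ and each $\hat{f}_B$ is $\nu$-integrable, then $L(f) := \int \hat{f}_B \, d\nu$ is linear, and PSD since $\hat{g}_B^2 \ge 0$ pointwise. The main content is constructing $\nu$ from a PSD $L : B \to \mathbb{R}$ and checking the two assignments are mutually inverse.

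For the construction, for each countable $\Omega' \subseteq \Omega$ restrict $L$ to the PSD functional $L_{\Omega'}$ on $B_{\Omega'}$ and apply Proposition \ref{main}, obtaining a Radon measure $\mu_{\Omega'}$ on $X(C_{\Omega'})$ with $\mu_{\Omega'}(\Delta_i) = 0$ for $i \in \Omega'$ and $L_{\Omega'}(f) = \int \tilde{f} \, d\mu_{\Omega'}$ for all $f \in B_{\Omega'}$. Countability of $\Omega'$ makes $X(C_{\Omega'}) \setminus X(B_{\Omega'}) = \bigcup_{i\in \Omega'}\Delta_i$ a Borel null set, so $\mu_{\Omega'}$ is supported on the Borel set $X(B_{\Omega'})$ and descends to a Radon measure $\nu_{\Omega'}$ on $X(B_{\Omega'})$ satisfying $L_{\Omega'}(f) = \int \hat{f}_{B_{\Omega'}} \, d\nu_{\Omega'}$. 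Compatibility check: for countable $\Omega' \subseteq \Omega''$, both $\nu_{\Omega'}$ and the pushforward of $\nu_{\Omega''}$ to $X(B_{\Omega'})$ represent $L_{\Omega'}$ by change of variables, so the uniqueness half of Proposition \ref{main} forces them to agree.

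With compatibility in hand, define $\nu$ on the constructibly Borel $\sigma$-algebra of $X(B)$ by $\nu(\pi^{-1}(E')) := \nu_{\Omega'}(E')$, where $\pi : X(B) \to X(B_{\Omega'})$ is the restriction map and $E'$ is a Borel subset of $X(B_{\Omega'})$. Well-definedness follows from compatibility, and countable additivity holds because any countable family of constructibly Borel sets is pulled back from a single countable $\Omega'$, using that a countable union of countable sets is countable (as in the earlier proposition characterizing constructibly Borel sets). The identity $L(f) = \int \hat{f}_B \, d\nu$ and $\nu$-integrability of each $\hat{f}_B$ then follow by change of variables applied to a finite $\Omega'$ containing the variables in $f$. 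Bijectivity: starting from $\nu$ and forming $L$ and then $\nu'$, each pushforward $\nu'_{\Omega'}$ matches the pushforward of $\nu$ to $X(B_{\Omega'})$ by the same uniqueness argument, and $\nu$ is determined by its pushforwards to all countably generated subalgebras.

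The main obstacle is the gluing step: assembling the compatible family $\{\nu_{\Omega'}\}$ into a single countably additive measure on the constructibly Borel $\sigma$-algebra of $X(B)$. This depends on the uniqueness clause of Proposition \ref{main} being sharp enough to force the pushforward diagrams to commute, and on the bridge between Radon measures on $X(C_{\Omega'})$ supported on $X(B_{\Omega'})$ and Radon measures on $X(B_{\Omega'})$ itself, which is available only because $\Omega'$ is countable.
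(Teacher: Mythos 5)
Your argument is correct, but it is organized differently from the paper's. The paper applies Proposition \ref{main} once, to the full index set, obtaining a single Radon measure $\mu$ on the compact space $X(C_{\Omega})$, and then defines $\nu$ directly by $\nu(X(B)\cap E):=\mu(E)$ for $E$ constructibly Borel in $X(C)$; the work there is to show that constructibly Borel sets of $X(C)$ disjoint from $X(B)$ are $\mu$-null (which is your observation $\mu_{\Omega'}(X(C_{\Omega'})\setminus X(B_{\Omega'}))=0$ in disguise) and that the constructibly Borel sets of $X(B)$ are exactly the traces $X(B)\cap E$. You instead never use the global $X(C_{\Omega})$: you extract a Radon measure $\nu_{\Omega'}$ on $X(B_{\Omega'})$ for each countable $\Omega'$ and glue. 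The gluing is sound, since every constructibly Borel set of $X(B)$ is pulled back from some countable stage and any countable family of such sets is pulled back from a single countable $\Omega'$, so well-definedness and countable additivity reduce to compatibility of the family. For the compatibility itself, note that citing ``the uniqueness half of Proposition \ref{main}'' is slightly short of the mark: that gives agreement of the pushforwards to $X(C_{\Omega'})$, and one must combine it with the final clause of Proposition \ref{addendum} (a Radon measure on $X(B_{\Omega'})$ is determined by its pushforward to $X(C_{\Omega'})$) and the routine check that the pushforward of $\nu_{\Omega''}$ to $X(B_{\Omega'})=\mathbb{R}^{\Omega'}$ is again Radon --- you flag this bridge at the end, so no real gap. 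What the paper's route buys is that compatibility and gluing come for free from the existence of the global $\mu$ on a compact space; what yours buys is an explicit presentation of $\nu$ as a projective limit over the countable stages, which makes the uniqueness argument (a constructibly Radon measure is determined by its countable-stage pushforwards) completely transparent. The uniqueness halves of the two proofs are essentially identical.
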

\begin{proof}
If $\nu$ is a constructibly Radon measure on $X(B)$ and $\hat{f}_B$ is $\nu$-integrable $\forall$ $f\in B$ then it is clear that 
the map $f \mapsto \int \hat{f}_B d\nu$, $f\in B$ is a PSD linear functional on $B$.
Conversely, suppose $L$ is a PSD linear functional on $B$. Let $\mu$ be the measure defined in Proposition \ref{main}. For each subset 
$I$ of $\Omega$, consider the subalgebra $B_I$ of $B$ and the subalgebra $C_I$ of $C$. Denote by $\mu_I$ the pushforward of $\mu$ 
via the canonical restriction map $\pi : X(C) \rightarrow X(C_I)$. One checks that $\mu_I$ is the Radon measure on $X(C_I)$ 
corresponding to the PSD linear map $L|_{B_I}$. In particular, if $I$ is countable then  $\mu_I(X(C_I)\backslash X(B_I))=0$.

\smallskip
\noindent
Claim 1: If $E$ in $X(C)$ is constructibly Borel and $X(B)\cap E = \emptyset$, then $\mu(E)=0$. Say $E= \pi^{-1}(E')$, $E'$ a Borel 
set in $X(C_I)$, $I\subseteq \Omega$ countable. Since the restriction map $X(B) \rightarrow X(B_I)$ is surjective, our hypothesis 
implies that $X(B_I)\cap E' = \emptyset$. It follows that $\mu(E) = \mu_I(E') = 0$ as required.

\smallskip
\noindent
Claim 2: The constructibly Borel sets in $X(B)$ are precisely the sets of the form $X(B)\cap E$ where $E$ is constructibly Borel 
in $X(C)$. This is more or less clear. If $f\in C$ then $U_B(f) = X(B)\cap U_C(f)$. If $f \in B$ then there exists $p$ of the form 
$p = \prod_{k=1}^n (1+x_{i_k}^2)^{\ell_k}$ where $x_{i_1},\dots,x_{i_n}$ are the variables appearing in $f$ such that $\frac{f}{p} \in C$. 
Also, $U_B(f) = U_B(\frac{f}{p})$ for any such $p$.

\smallskip

Define a measure $\nu$ on the $\sigma$-algebra of constructibly Borel subsets of $X(B)$ by $\nu(X(B)\cap E) := \mu(E)$ $\forall$ 
constructibly Borel subsets $E$ of $X(C)$. By Claim 1 $\nu$ is well-defined. By construction, $\mu$ is the pushforward of $\nu$ to $X(C)$. 
Also, $\hat{f}_B = \tilde{f}|_{X(B)}$ $\forall$ $f\in B$. 
It follows that $\int \hat{f}_B d\nu = \int \tilde{f} d\mu = L(f)$ $\forall$ $f\in B$. For each countable $I \subseteq \Omega$, 
the pushforward of $\nu$ to $X(B_I)$ is the Radon measure $\nu_I$ on $X(B_I)$ induced by $\mu_I$ using Proposition \ref{addendum} 
and Remark \ref{question}. It follows that $\nu$ is constructibly Radon.

It remains to show that $\nu$ is unique. Let $\nu'$ be any constructibly Radon measure on $X(B)$ such that 
$\int \hat{f}_B d\nu' = L(f)$ $\forall$ $f\in B$. For $I \subseteq \Omega$ countable, let $\nu'_I$ be the pushforward of $\nu'$ to 
$X(B_I)$ and let $\mu'_I$ be the pushforward of $\nu'_I$ to $X(C_I)$. 
Then $L(f) = \int \hat{f}_B d\nu' = \int \hat{f}_{C_I} d \mu'_I$ $\forall$ $f\in C_I$. Since we also have 
$L(f) = \int \hat{f}_B d\nu = \int \hat{f}_{C_I} d \mu_I$ $\forall$ $f\in C_I$, this implies 
$\int \hat{f}_{C_I} d\mu'_I = \int \hat{f}_{C_I} d\mu_I$ $\forall$ $f\in C_I$. 
Thus by uniqueness of $\mu_I$, $\mu'_I = \mu_I$ $\forall$ countable $I \subseteq \Omega$. This in turn implies that 
$\nu'_I = \nu_I$ $\forall$ countable $I \subseteq \Omega$, so $\nu' = \nu$.
\end{proof}
\begin{rem}\label{support remark} 
If $\mu$ is supported by a constructibly Borel set $K$ in $X(C)$ then $\nu$ is supported by $K\cap X(B)$. This follows from 
Claim 1. If $E$ is a constructibly Borel set in $X(C)$ and $E\cap K\cap X(B)=\emptyset$ then $\mu(E\cap K)=0$. Since $\mu$ is 
supported by $K$ this implies in turn that $\nu(E\cap X(B)) =\mu(E)=0$. Unfortunately, we are unable to prove this in the more 
general setting where $K$ is only assumed to be a Borel set of $X(C)$.
Of course, if $\mu$ happens to be the pushforward of a Radon measure $\nu$ on $X(B)$ (the case considered in Proposition 
\ref{addendum}) then $\mu$ supported by $K$ $\Rightarrow$ $\nu$ supported by $K\cap X(B)$ for any Borel set $K$ of $X(C)$.
\end{rem}
\section{Moment Problem}

We fix an index set $\Omega$ and define $A=A_{\Omega}$, $B=B_{\Omega}$ and $C=C_{\Omega}$ as in the previous section. 
We identify $X(A)=X(B) = \mathbb{R}^{\Omega}$. The measures $\nu$ arising in Proposition \ref{new} have \it finite moments, \rm 
i.e., $\int \hat{x^{\alpha}} d\nu$ is well-defined and finite for all monomials 
$x^{\alpha} := x_{i_1}^{\alpha_1}\dots x_{i_n}^{\alpha_n}$, $\{ i_1,\dots,i_n\} \subseteq \Omega$, $\alpha_k\ge 0$, $k=1,\dots,n$. 
Conversely, if $\nu$ is a constructibly Borel measure on $\mathbb{R}^{\Omega}$ having finite moments then $L : B \rightarrow \mathbb{R}$ 
defined by $L(f) := \int \hat{f} d\nu$ is a well-defined PSD linear functional on $B$. This is clear.

Much of what was done in \cite{M2} and \cite{M3} in the finite dimensional case carries over, more or less word for word, 
to the infinite dimensional case. Recall if $(X, \Sigma, \mu)$ is a measure space and $f : X \rightarrow \mathbb{C}$ is a 
measurable function, then 
\[
	\| f\|_{s,\mu} := \big[\int |f|^s d\mu \big]^{1/s}, \ s\in [1,\infty).
\] 
The \it Lebesgue space \rm $\mathcal{L}^s(\mu)$, by definition, is the $\mathbb{C}$-vector space  
\[
	\mathcal{L}^s(\mu) := \{ f : X \rightarrow \mathbb{C} \mid f \text{ is measurable and } \| f\|_{s,\mu} <\infty\}
\] 
equipped with the norm $\| \cdot \|_{s,\mu}$.
\begin{prop}\label{lp spaces} 
Suppose $\nu$ is a constructibly Radon measure on $\mathbb{R}^{\Omega}$ having finite moments. Then for any $s \in [1,\infty)$ 
the obvious $\mathbb{C}$-linear map $B\otimes_{\mathbb{R}} \mathbb{C} \rightarrow \mathcal{L}^s(\nu)$, $f\otimes r \mapsto r\hat{f}$ 
has dense image, equivalently, the image of $B$ under the $\mathbb{R}$-linear map $f \mapsto \hat{f}$ is dense in the real part of
$\mathcal{L}^s(\nu)$.
\end{prop}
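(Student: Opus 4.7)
The plan is to reduce to the case of countable $\Omega$, where $X(C)$ is compact metrizable, and then invoke Stone--Weierstrass together with the standard density of continuous functions in $\mathcal{L}^s$. Working componentwise in real and imaginary parts, it suffices to prove the density statement for the real part of $\mathcal{L}^s(\nu)$.

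For the reduction, fix $f$ in the real part of $\mathcal{L}^s(\nu)$. Dominated convergence shows $f_n := f\cdot \mathbf{1}_{|f|\le n} \to f$ in $\mathcal{L}^s(\nu)$. Each bounded $f_n$ is a uniform limit of simple functions $\sum_j c_j \mathbf{1}_{E_j}$ with the $E_j$ constructibly Borel. By the proposition characterizing constructibly Borel sets, each such $E_j$ equals $\pi_{I_j}^{-1}(E_j')$ for some countable $I_j \subseteq \Omega$ and Borel $E_j' \subseteq X(B_{I_j})$. Absorbing countably many such $I_j$ into a single countable $J\subseteq \Omega$, every simple function approximating every $f_n$ factors through the restriction $\pi_J : X(B) \to X(B_J)$. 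Hence $f$ is an $\mathcal{L}^s(\nu)$-limit of functions $g\circ \pi_J$ with $g$ in the real part of $\mathcal{L}^s(\nu_J)$, where $\nu_J$ is the pushforward of $\nu$. Since $\hat{h}_B = \hat{h}_{B_J}\circ \pi_J$ for $h\in B_J\subseteq B$, approximating each such $g$ in $\mathcal{L}^s(\nu_J)$ by elements of $\hat{B}_J$ lifts, via change of variables, to an approximation of $f$ in $\mathcal{L}^s(\nu)$ by elements of $\hat{B}$. It therefore suffices to prove the proposition when $\Omega$ is countable.

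Now assume $\Omega$ is countable. By Remark \ref{question}, $\nu$ is identified with the restriction to $X(B)$ of the Radon measure $\mu$ on $X(C)=\mathbb{S}^{\Omega}$ from Proposition \ref{main}, and $X(C)\setminus X(B)$ is $\mu$-null. The space $X(C)$ is compact and metrizable, and $\hat{C}\subseteq C(X(C),\mathbb{R})$ is a unital $\mathbb{R}$-subalgebra whose generators $\hat{y}_i,\hat{z}_i$ separate points; Stone--Weierstrass then gives uniform density of $\hat{C}$ in $C(X(C),\mathbb{R})$, which, since $\mu$ is finite, implies $\mathcal{L}^s(\mu)$-density. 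The standard fact that continuous functions are dense in $\mathcal{L}^s$ of a finite Radon measure on a compact metrizable space (via Lusin's theorem) then gives density of $\hat{C}$ in the real part of $\mathcal{L}^s(\mu)$. Restriction to $X(B)$ is an $\mathcal{L}^s$-isometry onto $\mathcal{L}^s(\nu)$ (because the complement is $\mu$-null) that sends $\hat{f}_C$ to $\hat{f}_B$ for $f\in C$; combined with $\hat{C}\subseteq \hat{B}$, this yields the required density.

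The main obstacle is the reduction step, where one must bundle all simple-function approximants of a single $f$ so that they depend on a single countable index set $J$; this is handled by the fact that a countable union of countable sets is countable, applied across the nested approximations (bounded truncation, then simple function approximation). The Stone--Weierstrass step is essentially routine once compactness and metrizability of $X(C)$ are in hand.
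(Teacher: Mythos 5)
Your proof is correct and follows essentially the same route as the paper's: reduce to the countable case by writing constructibly Borel sets as pullbacks along a restriction to a countably generated subalgebra and applying the change-of-variables theorem, then work on the compact space $X(C)$, approximating in $\mathcal{L}^s(\mu)$ by continuous functions (the paper does this by hand with inner regularity and Urysohn where you cite Lusin) and finishing with Stone--Weierstrass applied to $\hat{C}\subseteq\hat{B}$. The only differences are organizational (the paper first reduces to characteristic functions, which makes the passage to a single countable index set immediate), not mathematical.
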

Note that $A\otimes_{\mathbb{R}} \mathbb{C}= \mathbb{C}[x_i \mid i \in \Omega]$, 
$B\otimes_{\mathbb{R}} \mathbb{C}= \mathbb{C}[x_i, \frac{1}{1+x_i^2} \mid i\in \Omega]$,  and 
$C\otimes_{\mathbb{R}} \mathbb{C} = \mathbb{C}[\frac{1}{1+x_i^2}, \frac{x_i}{1+x_i^2} \mid i\in \Omega]$.

\begin{proof}
It suffices to show that the step functions $\sum_{j=1}^m r_j\chi_{E_j}$, $r_j \in \mathbb{C}$, $E_j \subseteq X(B)$ a 
constructibly Borel set, belong to the closure of the image of $B\otimes_{\mathbb{R}} \mathbb{C}$. Using the triangle inequality 
we are reduced further to the case $m=1$, $r_1=1$. Let $E \subseteq X(B)$ be a constructibly Borel set. Writing $E= \pi^{-1}(E')$ 
where $E'$ is a Borel set in $X(B_I)$, for some appropriate countable $I \subseteq \Omega$, and applying the change of variable 
theorem, we see that $\| \chi_E -\hat{f}_B\|_{s,\nu} = \|\chi_{E'}-\hat{f}_{B_I}\|_{s,\nu_I}$ $\forall$ $f\in B_I$. In this way 
we are reduced to the case where $\Omega$ is countable.  Choose $K$ compact, $U$ open in $X(C)$ such that $K \subseteq E \subseteq U$, 
$\mu(U\backslash K)< \epsilon$. By Urysohn's lemma there exists a continuous function $\phi : X(C) \rightarrow \mathbb{R}$ such that 
$0\le \phi \le 1$ on $X(C)$, $\phi = 1$ on $K$, $\phi = 0$ on $X(C) \backslash U$. Then $\| \chi_E-\phi\|_{s,\mu} \le \epsilon^{1/s}$. 
Use the Stone-Weierstrass approximation theorem to get $f\in C$ such that $\|\phi- \hat{f}_C\|_{\infty}<\epsilon$, where 
$\| \cdot \|_{\infty}$ denotes the sup-norm. Then $\|\phi- \hat{f}_C\|_{s,\mu} \le \epsilon\mu(X(C))^{1/s}$. 
Putting these things together yields 
$\|\chi_E-\hat{f}_B\|_{s,\nu}=\|\chi_E-\hat{f}_C\|_{s,\mu}\le\|\chi_E-\phi\|_{s,\mu}+\|\phi-\hat{f}_C\|_{s,\mu}\le \epsilon^{1/s}+\epsilon\mu(X(C))^{1/s}$.
\end{proof}
From now on, by a constructibly Radon measure on $\mathbb{R}^{\Omega}$ we will mean a constructibly Radon measure on 
$\mathbb{R}^{\Omega}$ having finite moments.
\begin{prop}
For any constructibly Radon measure $\nu$ on $\mathbb{R}^{\Omega}$ and any $s\in [1,\infty)$, 
$A_{\Omega}\otimes_{\mathbb{R}} \mathbb{C}$ is dense in $\mathcal{L}^s(\nu)$ iff $A_{\Omega}\otimes_{\mathbb{R}} \mathbb{C}$ 
is dense in $B_{\Omega}\otimes_{\mathbb{R}} \mathbb{C}$ in the $\| \cdot \|_{s,\nu}$-norm.
\end{prop}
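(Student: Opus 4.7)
The proposition is essentially a transitivity-of-density statement, and the plan is to reduce it immediately to Proposition \ref{lp spaces}, which already tells us that $B_\Omega\otimes_{\mathbb{R}}\mathbb{C}$ sits densely in $\mathcal{L}^s(\nu)$. Throughout, I would work with the norm $\|\cdot\|_{s,\nu}$ on the common ambient space $\mathcal{L}^s(\nu)$, noting that $A_\Omega\otimes_{\mathbb{R}}\mathbb{C}\subseteq B_\Omega\otimes_{\mathbb{R}}\mathbb{C}\subseteq\mathcal{L}^s(\nu)$ (the last inclusion uses the hypothesis that $\nu$ has finite moments, so that every element of $B_\Omega$ is $\nu$-integrable to the $s$-th power — this is in fact exactly what is being folded into the new convention stated just before the proposition).

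For the forward direction, suppose $A_\Omega\otimes_{\mathbb{R}}\mathbb{C}$ is dense in $\mathcal{L}^s(\nu)$. Since $B_\Omega\otimes_{\mathbb{R}}\mathbb{C}$ is a subspace of $\mathcal{L}^s(\nu)$ equipped with the restriction of $\|\cdot\|_{s,\nu}$, any element of $B_\Omega\otimes_{\mathbb{R}}\mathbb{C}$ can be approximated in $\|\cdot\|_{s,\nu}$ by elements of $A_\Omega\otimes_{\mathbb{R}}\mathbb{C}$; this gives density of $A_\Omega\otimes_{\mathbb{R}}\mathbb{C}$ in $B_\Omega\otimes_{\mathbb{R}}\mathbb{C}$ in the $\|\cdot\|_{s,\nu}$-norm.

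For the converse, suppose $A_\Omega\otimes_{\mathbb{R}}\mathbb{C}$ is dense in $B_\Omega\otimes_{\mathbb{R}}\mathbb{C}$ in the $\|\cdot\|_{s,\nu}$-norm. Given $g\in\mathcal{L}^s(\nu)$ and $\epsilon>0$, apply Proposition \ref{lp spaces} to choose $f\in B_\Omega\otimes_{\mathbb{R}}\mathbb{C}$ with $\|g-\hat f\|_{s,\nu}<\epsilon/2$, then use the hypothesis to choose $a\in A_\Omega\otimes_{\mathbb{R}}\mathbb{C}$ with $\|\hat f-\hat a\|_{s,\nu}<\epsilon/2$, and conclude by the triangle inequality that $\|g-\hat a\|_{s,\nu}<\epsilon$.

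I do not anticipate any genuine obstacle: the only nontrivial ingredient is Proposition \ref{lp spaces}, and the remaining argument is pure formal manipulation of density in a normed space. The one point to be slightly careful about is to make the $B_\Omega\otimes_{\mathbb{R}}\mathbb{C}\subseteq\mathcal{L}^s(\nu)$ inclusion explicit, invoking the finite-moment convention in force, so that the statement of density in $B_\Omega\otimes_{\mathbb{R}}\mathbb{C}$ in the $\|\cdot\|_{s,\nu}$-norm even makes sense.
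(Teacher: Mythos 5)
Your proposal is correct and is exactly the paper's argument: the authors likewise dispose of the proposition in one line by noting that density is transitive and invoking Proposition \ref{lp spaces} for the density of $B_{\Omega}\otimes_{\mathbb{R}}\mathbb{C}$ in $\mathcal{L}^s(\nu)$. Your write-up merely spells out the two directions (subspace restriction for the forward implication, triangle inequality for the converse) and makes explicit the finite-moment convention needed for the inclusions to make sense, which is a reasonable elaboration of the same approach.
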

\begin{proof} 
Since the density property in question is transitive, this is immediate from Proposition \ref{lp spaces}.
\end{proof}
\begin{prop}\label{density corollary} 
Suppose $\nu$ is a constructibly Radon measure on $\mathbb{R}^{\Omega}$ and $s\in (1,\infty)$. Suppose for each 
$j\in \Omega$ $\exists$ $q_{jk} \in A_{\Omega}\otimes_{\mathbb{R}} \mathbb{C}$ such that 
$\| q_{jk}-\frac{1}{x_j-\mathrm{i}}\|_{s,\nu} \rightarrow 0$ as $k\rightarrow \infty$. 
Then $A_{\Omega}\otimes_{\mathbb{R}} \mathbb{C}$ is dense in $\mathcal{L}^{s'}(\nu)$ for each $s' \in [1,s)$.
\end{prop}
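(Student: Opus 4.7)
The plan is to use the preceding proposition to reduce the claim to showing that $A_\Omega \otimes_{\mathbb{R}} \mathbb{C}$ is dense in $B_\Omega \otimes_{\mathbb{R}} \mathbb{C}$ in the $\|\cdot\|_{s',\nu}$ norm for every $s' \in [1,s)$. An arbitrary element of $B_\Omega \otimes_{\mathbb{R}} \mathbb{C}$ has the form
\[
b = f \cdot \prod_{j \in J}\frac{1}{(x_j - \mathrm{i})^{\ell_j}(x_j + \mathrm{i})^{\ell_j}}
\]
with $f \in A_\Omega \otimes_{\mathbb{R}} \mathbb{C}$ and $J \subseteq \Omega$ finite, and since $\overline{q_{jk}} \to \frac{1}{x_j + \mathrm{i}}$ in $\mathcal{L}^s(\nu)$ by taking complex conjugates (using that the $x_j$ are real on $\mathbb{R}^\Omega$ and $\overline{q_{jk}} \in A_\Omega \otimes_{\mathbb{R}} \mathbb{C}$), the task reduces to polynomially approximating products of the bounded functions $\frac{1}{(x_j \pm \mathrm{i})^m}$ and then multiplying by the polynomial $f$.

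The workhorse estimate is H\"older's inequality $\|uv\|_{s''} \le \|u\|_r \|v\|_{s'}$ with $\frac{1}{s''} = \frac{1}{r} + \frac{1}{s'}$. Two facts will be used repeatedly: (a) $\nu$ is finite, because $\hat{1} \in B$ is $\nu$-integrable forces $\nu(\mathbb{R}^\Omega) < \infty$; (b) since $\nu$ has finite moments, every element of $A_\Omega \otimes_{\mathbb{R}} \mathbb{C}$ lies in $\mathcal{L}^r(\nu)$ for every $r \in [1,\infty)$, while each $\frac{1}{x_j \pm \mathrm{i}}$ has $\mathcal{L}^\infty$-norm at most $1$. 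Finiteness of $\nu$ also upgrades convergence in $\mathcal{L}^s$ to convergence in $\mathcal{L}^{s''}$ for any $s'' \le s$.

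A first induction on the power $k \ge 1$ shows that, for each $j \in \Omega$ and each $s'' \in [1,s)$, $\frac{1}{(x_j - \mathrm{i})^k}$ lies in the $\|\cdot\|_{s'',\nu}$-closure of $A_\Omega \otimes_{\mathbb{R}} \mathbb{C}$. The base case $k=1$ is the hypothesis. For the step, fix $s'' < s$ and $s' \in (s'',s)$, and decompose
\[
q_{jn} P - \frac{1}{(x_j-\mathrm{i})^k} = q_{jn}\Bigl(P - \frac{1}{(x_j-\mathrm{i})^{k-1}}\Bigr) + \Bigl(q_{jn} - \frac{1}{x_j-\mathrm{i}}\Bigr)\frac{1}{(x_j-\mathrm{i})^{k-1}}.
\]
The second summand is bounded in $\|\cdot\|_{s'',\nu}$ by a constant multiple of $\|q_{jn} - \frac{1}{x_j - \mathrm{i}}\|_{s,\nu}$ (using that $\frac{1}{(x_j-\mathrm{i})^{k-1}}$ is bounded by $1$ and $s'' \le s$), so choose $n$ large to make it small. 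The key move is to fix this $n$ first, so that $q_{jn}$ is a fixed polynomial with $\|q_{jn}\|_{r,\nu}$ finite where $\frac{1}{s''} = \frac{1}{r} + \frac{1}{s'}$, and only then pick $P$ using the induction hypothesis at level $k-1$ and exponent $s'$, making the first summand small via H\"older.

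An analogous induction on $|J|$, applied to the splitting $P_1 P_2 - G_1 G_2 = (P_1 - G_1)G_2 + P_1(P_2 - G_2)$ with the same ``fix the polynomial factor first, then refine the other factor in $\mathcal{L}^{s'}$'' device, extends this to polynomial approximations of $\prod_{j \in J} \frac{1}{(x_j - \mathrm{i})^{m_j}(x_j + \mathrm{i})^{n_j}}$ in $\|\cdot\|_{s'',\nu}$ for any $s'' \in [1,s)$, after which one final H\"older application multiplies such an approximation by $f$. The main obstacle, and the reason for the careful ordering, is that a naive bound $\|uv\|_{s/2} \le \|u\|_s\|v\|_s$ loses a factor of $2$ in the exponent per multiplication, so a crude iteration through powers and products would only recover $s'' < s/2^N$; fixing the polynomial factor first sidesteps this, since a fixed polynomial lies in every $\mathcal{L}^r$ and the H\"older conjugate only needs an $\mathcal{L}^{s'}$-approximation for some $s' < s$ arbitrarily close to $s$.
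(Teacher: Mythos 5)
Your argument is correct, and it reaches the paper's conclusion by a genuinely different decomposition. Both you and the paper first reduce, via the preceding proposition, to showing that $A_{\Omega}\otimes_{\mathbb{R}}\mathbb{C}$ is $\|\cdot\|_{s',\nu}$-dense in $B_{\Omega}\otimes_{\mathbb{R}}\mathbb{C}$, and both rest on H\"older together with the fact that a fixed element of $B_{\Omega}\otimes_{\mathbb{R}}\mathbb{C}$ lies in $\mathcal{L}^r(\nu)$ for every finite $r$. The paper, however, inducts on the total number of factors $x_j\pm\mathrm{i}$ in the denominator of $f$ itself: if $x_j-\mathrm{i}$ occurs there, then $fq_{jk}(x_j-\mathrm{i})$ has one fewer such factor, hence lies in the $\|\cdot\|_{s'}$-closure by the induction hypothesis at the \emph{same} exponent $s'$, and the single identity $fq_{jk}(x_j-\mathrm{i})-f=(q_{jk}-\tfrac{1}{x_j-\mathrm{i}})\,f(x_j-\mathrm{i})$ plus one application of H\"older with exponents $s/s'$ and $s/(s-s')$ bounds the error by $\|q_{jk}-\tfrac{1}{x_j-\mathrm{i}}\|_{s,\nu}\cdot\|f(x_j-\mathrm{i})\|_{ss'/(s-s'),\nu}$, the second factor being a fixed finite constant. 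Folding the numerator into the estimate from the outset gives exactly one error term per step and no drift in the exponent. You instead first manufacture polynomial approximants to the bounded pure-denominator products $\prod(x_j\pm\mathrm{i})^{-m_j}$ and only multiply by the numerator at the very end; this is what forces your two-term splittings, the strengthened induction hypothesis ranging over all exponents in $[1,s)$, and the ``fix the polynomial factor first'' ordering. Your diagnosis of why a naive iteration would degrade the exponent to $s/2^N$ is accurate, and your fix is sound; the paper's single-identity trick is simply a more economical way of sidestepping the same obstacle. Neither route buys more generality than the other.
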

\begin{proof}
Denote by $\overline{A_{\Omega}\otimes_{\mathbb{R}}\mathbb{C}}$ the closure of $A_{\Omega}\otimes_{\mathbb{R}}\mathbb{C}$ with 
respect to topology induced by the norm $\|\cdot \|_{s',\nu}$. It suffices to show that each 
$f \in B_{\Omega}\otimes_{\mathbb{R}}\mathbb{C}$ belongs to $\overline{A_{\Omega}\otimes_{\mathbb{R}}\mathbb{C}}$.
The proof is by induction on the number of factors of the form $x_j\pm\mathrm{i}$ appearing in the denominator of $f$. 
Suppose $x_j-\mathrm{i}$ appears in the denominator of $f$. By induction, $fq_{jk}(x_j-\mathrm{i})$ belongs to 
$\overline{A_{\Omega}\otimes_{\mathbb{R}}\mathbb{C}}$, for each $k \ge 1$. Applying H\"older's inequality 
\[
	\int |PQ| d\nu \le [\int |P|^a d\nu]^{\frac{1}{a}}\cdot [\int |Q|^b d\nu]^{\frac{1}{b}}, \ \frac{1}{a}+\frac{1}{b}=1
\]
with $P = |q_{jk}-\frac{1}{x_j-\mathrm{i}}|^{s'}$, $Q = |f(x_j-\mathrm{i})|^{s'}$, $a= \frac{s}{s'}$, $b= \frac{s}{s-s'}$, we see that
\begin{align*}
\| fq_{jk}(x-\mathrm{i})-f\|_{s',\nu} =& \| (q_{jk}-\frac{1}{x_j-\mathrm{i}})f(x_j-\mathrm{i})\|_{s',\nu} \\ \le& \| q_{jk}-\frac{1}{x_j-\mathrm{i}}\|_{s,\nu} \cdot \| f(x_j-\mathrm{i})\|_{\frac{ss'}{s-s'},\nu}.
\end{align*}
It follows that $f$ belongs to $\overline{A_{\Omega}\otimes_{\mathbb{R}}\mathbb{C}}$.
The case where $x_j+\mathrm{i}$ appears in the denominator of $f$ is dealt with similarly, replacing $q_{jk}$ by $\overline{q_{jk}}$.
\end{proof}
Proposition \ref{density corollary} extends Petersen's result in \cite[Proposition]{Pet}. In the one variable case, i.e., when 
$|\Omega| = 1$, one can conclude also that $A_{\Omega}\otimes_{\mathbb{R}} \mathbb{C}$ is dense in $\mathcal{L}^s(\nu)$; 
see \cite[Corollary 3.3]{M2}.

Caution: The proof given in \cite[Corollary 3.6]{M2} is not correct.
The proof in \cite[Corollary 3.6]{M2} is correct when $q_{jk} \in \mathbb{C}[x_j]$ for each $j$ and $k$.

The following result extends \cite[Corollary 2.5]{M2} to the case where $\Omega$ is infinite.
\begin{prop}
For any linear functional $L : A_{\Omega} \rightarrow \mathbb{R}$, the set of constructibly Radon measures $\nu$ on 
$\mathbb{R}^{\Omega}$ satisfying $L(f) = \int \hat{f} d\nu$ $\forall$ $f \in A_{\Omega}$ is in natural one-to-one correspondence 
with the set of PSD linear functionals $L' : B_{\Omega} \rightarrow \mathbb{R}$ extending $L$.
\end{prop}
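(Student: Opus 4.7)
The plan is to obtain this as a direct restriction of the bijection established in Proposition \ref{new}. Under the standing convention (adopted just before Proposition \ref{density corollary}), ``constructibly Radon measure on $\mathbb{R}^{\Omega}$'' includes the finite-moments requirement, and Proposition \ref{new} provides a bijection $L' \leftrightarrow \nu$ between PSD linear functionals on $B_{\Omega}$ and such measures on $X(B_{\Omega}) = \mathbb{R}^{\Omega}$, characterized by $L'(f) = \int \hat{f}_B \, d\nu$ for all $f \in B_{\Omega}$.

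First I would verify that this bijection restricts as claimed. If $L'$ extends $L$, then specializing the identity $L'(f) = \int \hat{f}_B \, d\nu$ to $f \in A_{\Omega}$ and noting that $\hat{f}_B$ and $\hat{f}_A$ agree pointwise on $X(B) = X(A) = \mathbb{R}^{\Omega}$ gives $L(f) = \int \hat{f} \, d\nu$. Conversely, given a constructibly Radon measure $\nu$ with $L(f) = \int \hat{f} \, d\nu$ for all $f \in A_{\Omega}$, I would define $L'(f) := \int \hat{f}_B \, d\nu$ for $f \in B_{\Omega}$. The one point needing attention is integrability: each $f \in B_{\Omega}$ has the form $a/q$ with $a \in A_{\Omega}$ and $q = \prod_{k} (1+x_{i_k}^2)^{\ell_k} \ge 1$, so $|\hat{f}_B| \le |\hat{a}_A|$ pointwise on $\mathbb{R}^{\Omega}$, and the right-hand side is $\nu$-integrable because $\nu$ has finite moments (the polynomial $a$ is a finite linear combination of monomials). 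Then $L'$ is manifestly linear and PSD on $B_{\Omega}$, and taking $q = 1$ shows $L'|_{A_{\Omega}} = L$.

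Bijectivity is inherited from Proposition \ref{new}: the two constructions $L' \mapsto \nu$ and $\nu \mapsto L'$ are mutually inverse, since the uniqueness clause of Proposition \ref{new} guarantees that $\nu$ is determined by $L'$ via $L'(f) = \int \hat{f}_B \, d\nu$, and the formula in the reverse direction trivially determines $L'$ from $\nu$. I do not anticipate any real obstacle here; the content of the proposition is essentially that Proposition \ref{new} is compatible with the extension problem appearing in Proposition \ref{extendibility}, and the only estimate of substance is the pointwise bound $|\hat{f}_B| \le |\hat{a}_A|$ already exploited in the proof of Proposition \ref{general extendibility}.
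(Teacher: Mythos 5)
Your argument is correct and is essentially the paper's own proof: the paper likewise defines $L'(f) = \int \hat{f}\,d\nu$ for $f \in B_{\Omega}$ (with integrability guaranteed by the finite-moments convention, i.e.\ the bound $|\hat{f}_B| \le |\hat{a}_A|$ you cite) and invokes Proposition \ref{new} for bijectivity. Your write-up merely makes the integrability check and the two inverse directions explicit, which the paper leaves implicit.
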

\begin{proof} 
If $\nu$ is a constructibly Radon measure on $\mathbb{R}^{\Omega}$ such that $L(f) = \int \hat{f} d\nu$ $\forall$ $f\in A_{\Omega}$, 
the corresponding extension of $L$ to a PSD linear functional $L': B_{\Omega} \rightarrow \mathbb{R}$ is defined by 
$L'(f)= \int \hat{f} d\nu$ $\forall$ $f\in B_{\Omega}$. The correspondence $\nu \mapsto L'$ has the desired properties by Proposition \ref{new}.
\end{proof}
For $\nu$ any constructibly Radon measure on $\mathbb{R}^{\Omega}$, define $L_{\nu} : A_{\Omega}\rightarrow \mathbb{R}$ by 
$L_{\nu}(f) = \int \hat{f} d\nu$ $\forall$ $f\in A_{\Omega}$. 
If $\nu'$ is another constructibly Radon measure on $\mathbb{R}^{\Omega}$, we write $\nu \sim \nu'$ to indicate that $\nu$ and 
$\nu'$ have the same moments, i.e., $L_{\nu} = L_{\nu'}$. We say $\nu$ is \it determinate \rm if $\nu \sim \nu'$ $\Rightarrow$ $\nu = \nu'$ 
and \it indeterminate \rm if this is not the case.

\begin{prop}\label{at most one} 
Suppose $L : A_{\Omega} \rightarrow \mathbb{R}$ is linear and, for each $j \in \Omega$,
\begin{equation}\label{m}
\exists \text{ a sequence } \{p_{jk}\}_{k=1}^{\infty} \text{ in } A_{\Omega}\otimes \mathbb{C} \text{ such that }  \lim\limits_{k\rightarrow\infty}L(|1-(x_j-\mathrm{i})p_{jk}|^2)=0.
\end{equation}
Then there is at most one constructibly Radon measure $\nu$ on $\mathbb{R}^{\Omega}$ such that $L = L_{\nu}$.
\end{prop}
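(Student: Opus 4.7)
The plan is to show that if $\nu$ and $\nu'$ are constructibly Radon measures on $\mathbb{R}^{\Omega}$ with $L_\nu = L_{\nu'} = L$, then $\int \hat f\,d\nu = \int \hat f\,d\nu'$ for every $f \in B_\Omega$. Once this is established, the uniqueness clause of Proposition~\ref{new} forces $\nu = \nu'$.

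The first step is to translate the hypothesis into an $\mathcal{L}^2$-approximation statement. Since $|1-(x_j-\mathrm{i})p_{jk}|^2 = (1-(x_j-\mathrm{i})p_{jk})(1-(x_j+\mathrm{i})\overline{p_{jk}})$ lies in $A_\Omega$, the hypothesis together with $L = L_\nu = L_{\nu'}$ gives
\[
\int |1-(x_j-\mathrm{i})p_{jk}|^2\,d\mu \;=\; L(|1-(x_j-\mathrm{i})p_{jk}|^2) \;\longrightarrow\; 0 \qquad (\mu\in\{\nu,\nu'\}).
\]
Since $|x_j-\mathrm{i}|^2 = 1+x_j^2 \ge 1$, dividing the integrand by $|x_j-\mathrm{i}|^2$ gives $\|p_{jk} - \tfrac{1}{x_j-\mathrm{i}}\|_{2,\mu} \to 0$, and conjugating yields $\|\overline{p_{jk}} - \tfrac{1}{x_j+\mathrm{i}}\|_{2,\mu} \to 0$, for $\mu\in\{\nu,\nu'\}$.

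Next I would prove, by induction on the minimal number of factors $(x_j\pm\mathrm{i})$ appearing in a denominator representation of $f\in B_\Omega\otimes_{\mathbb{R}}\mathbb{C}$, that $\int \hat f\,d\nu = \int \hat f\,d\nu'$. The base case ($f \in A_\Omega\otimes_{\mathbb{R}}\mathbb{C}$) is immediate from $L_\nu = L_{\nu'}$. For the inductive step, assume that some factor $(x_j-\mathrm{i})$ appears in the denominator (the $(x_j+\mathrm{i})$ case is handled identically with $\overline{p_{jk}}$ in place of $p_{jk}$), and write $f = h/(x_j-\mathrm{i})$ with $h\in B_\Omega\otimes_{\mathbb{R}}\mathbb{C}$ of strictly smaller complexity. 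Because $|h|$ is bounded by a polynomial and $\mu$ has finite moments, $h\in \mathcal{L}^2(\mu)$, so the Cauchy--Schwarz inequality yields
\[
\left|\int h\,p_{jk}\,d\mu \;-\; \int \frac{h}{x_j-\mathrm{i}}\,d\mu\right| \;\le\; \|h\|_{2,\mu}\cdot \left\|p_{jk}-\frac{1}{x_j-\mathrm{i}}\right\|_{2,\mu} \;\longrightarrow\; 0
\]
for $\mu\in\{\nu,\nu'\}$. Since multiplication by the polynomial $p_{jk}\in A_\Omega\otimes_{\mathbb{R}}\mathbb{C}$ does not enlarge the denominator, $h\,p_{jk}$ has the same complexity as $h$, so the inductive hypothesis applies and gives $\int h\,p_{jk}\,d\nu = \int h\,p_{jk}\,d\nu'$. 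Passing to the limit then yields $\int\hat f\,d\nu = \int\hat f\,d\nu'$, completing the induction.

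The main obstacle is essentially bookkeeping: setting up the denominator-complexity carefully, confirming that every intermediate element $h$ lies in $\mathcal{L}^2(\nu)\cap\mathcal{L}^2(\nu')$ so that Cauchy--Schwarz applies, and verifying that multiplying by $p_{jk}$ really does drop the complexity by exactly one so the induction closes. No analytic input beyond Cauchy--Schwarz and the uniqueness in Proposition~\ref{new} is needed.
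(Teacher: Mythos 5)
Your proof is correct and is essentially the argument the paper invokes by citing \cite[Corollary 2.7]{M2}: rewrite the hypothesis as $\|p_{jk}-\tfrac{1}{x_j-\mathrm{i}}\|_{2,\mu}\to 0$ for each candidate measure $\mu$ (using $1+x_j^2\ge 1$), run the induction on denominator factors $x_j\pm\mathrm{i}$ with Cauchy--Schwarz to show the two measures integrate every element of $B_{\Omega}$ identically, and conclude from the uniqueness clause of Proposition~\ref{new}. All the side conditions you flag (square-integrability of each intermediate $h$ from finite moments, and that multiplying by the polynomial $p_{jk}$ does not increase the denominator count) do hold, so the induction closes.
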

\begin{proof}
Argue as in \cite[Corollary 2.7]{M2}.
\end{proof}
Proposition \ref{at most one} extends Fuglede's result in \cite[Section 7]{F} and Petersen's result in \cite[Theorem 3]{Pet}.
\begin{prop}\label{exactly one} 
Suppose $L : A_{\Omega} \rightarrow \mathbb{R}$ is linear and PSD and, for each $j \in \Omega$,
\begin{equation}\label{n}
\exists \text{ a sequence } \{q_{jk}\}_{k=1}^{\infty} \text{ in } A_{\Omega}\otimes \mathbb{C} \text{ such that } \lim\limits_{k\rightarrow\infty}L(|1-(1+x_j^2)q_{jk}\overline{q_{jk}}|^2)=0.
\end{equation}
Then there exists a unique constructibly Radon measure $\nu$ on $\mathbb{R}^{\Omega}$ such that $L = L_{\nu}$.
\end{prop}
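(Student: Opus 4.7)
The plan is to split into existence and uniqueness. Uniqueness will follow at once from Proposition \ref{at most one}, while existence reduces to verifying condition (3) of Proposition \ref{extendibility} and then invoking Proposition \ref{new}.

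\textbf{Uniqueness.} I set $p_{jk}:=(x_j+\mathrm{i})\,q_{jk}\overline{q_{jk}}\in A_\Omega\otimes\mathbb{C}$ and compute
\[
1-(x_j-\mathrm{i})p_{jk}=1-(x_j-\mathrm{i})(x_j+\mathrm{i})q_{jk}\overline{q_{jk}}=1-(1+x_j^2)|q_{jk}|^2,
\]
so hypothesis (\ref{n}) gives $L(|1-(x_j-\mathrm{i})p_{jk}|^2)\to 0$, which is hypothesis (\ref{m}) of Proposition \ref{at most one}. Hence at most one $\nu$ satisfies $L=L_\nu$.

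\textbf{Existence.} I would verify condition (3) of Proposition \ref{extendibility}: given $f\in A_\Omega$ with $pf\in\sum A_\Omega^2$ for some $p=\prod_{k=1}^n(1+x_{i_k}^2)^{\ell_k}$ (where $x_{i_1},\dots,x_{i_n}$ are the variables in $f$), show $L(f)\ge 0$. Form the product $Q_r:=\prod_{k=1}^n\bigl(q_{i_k,r_k}\overline{q_{i_k,r_k}}\bigr)^{\ell_k}$, which lies in $\sum A_\Omega^2$ since each $q_{jk}\overline{q_{jk}}$ is already a sum of two real squares. Then $pf\cdot Q_r\in\sum A_\Omega^2$ as a product of two elements of $\sum A_\Omega^2$, so $L(pf\cdot Q_r)\ge 0$. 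Now $pf\cdot Q_r=f\prod_{k=1}^n u_{i_k,r_k}^{\ell_k}$ where $u_{j,r}:=(1+x_j^2)|q_{j,r}|^2$, and by (\ref{n}) we have $L(|u_{j,r}-1|^2)\to 0$. Passing to the limit $r_k\to\infty$ one variable at a time should yield $L(f\prod_k u_{i_k,r_k}^{\ell_k})\to L(f)$, hence $L(f)\ge 0$. Once positivity is in hand, Proposition \ref{extendibility} produces a PSD extension $L':B_\Omega\to\mathbb{R}$, and Proposition \ref{new} then supplies the desired constructibly Radon measure $\nu$ on $\mathbb{R}^\Omega$ with $L'(g)=\int\hat g_B\,d\nu$ for $g\in B_\Omega$; restricting to $A_\Omega$ gives $L=L_\nu$.

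\textbf{Main obstacle.} The delicate step is justifying $L(f\prod_k u_{i_k,r_k}^{\ell_k})\to L(f)$: one must propagate the $L^2(L)$-convergence $u_{j,r}\to 1$ through a product of possibly high powers. The natural tool is a Cauchy-style argument built on the algebraic identity
\[
|q_{jk}|^2-|q_{jl}|^2=|q_{jk}|^2\bigl(1-(1+x_j^2)|q_{jl}|^2\bigr)-|q_{jl}|^2\bigl(1-(1+x_j^2)|q_{jk}|^2\bigr),
\]
Cauchy--Schwarz applied to the positive semidefinite Hermitian form $(f,g)\mapsto L(f\overline{g})$, and PSD-bounds such as $L(|q_{jk}|^4)\le L((1+x_j^2)^2|q_{jk}|^4)$, which follows from $(1+x^2)^2-1=(x^2)^2+2x^2\in\sum A^2$. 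I expect the argument to parallel the finite-dimensional version in \cite{M2}; since $f$ and $p$ involve only finitely many variables, the whole analysis reduces to a countable subset of $\Omega$, and the remaining bookkeeping is handled by the countability reduction already built into Proposition \ref{new}.
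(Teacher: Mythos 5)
Your uniqueness argument is exactly the paper's: setting $p_{jk}=(x_j+\mathrm{i})q_{jk}\overline{q_{jk}}$ is precisely Remark \ref{clarify}(i), and Proposition \ref{at most one} then gives that at most one $\nu$ can work. For existence the paper simply defers to \cite[Corollaries 4.7 and 4.8]{M2} and \cite[Theorem 0.1]{M3}, and your overall architecture --- produce a PSD extension of $L$ to $B_\Omega$ and apply Proposition \ref{new} --- is the intended one; routing the extension through positivity and condition (3) of Proposition \ref{extendibility} rather than constructing it directly is a harmless variation, and the algebraic step $pf\cdot Q_r=f\prod_k u_{i_k,r_k}^{\ell_k}\in\sum A_\Omega^2$ is correct.

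The one step that does not close as written is the limit $L(f\prod_k u_{i_k,r_k}^{\ell_k})\to L(f)$. Telescoping and applying Cauchy--Schwarz for the unweighted form $(g,h)\mapsto L(g\overline h)$ to a term $L\bigl(fP(u)(u_{j,r}-1)\bigr)$ gives the bound $L(|fP(u)|^2)^{1/2}\,L(|u_{j,r}-1|^2)^{1/2}$, and the first factor involves powers $u_{j,r}^{2m}$ with $m\ge 2$; trying to bound these sends you to $L(|1-u_{j,r}|^{2m})$ for $m\ge 2$, which hypothesis (\ref{n}) does not control (it is an $L^2$-condition only). The inequality $L(|q_{jk}|^4)\le L((1+x_j^2)^2|q_{jk}|^4)=L(u_{j,k}^2)$ that you cite handles only the quadratic case. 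The finite-variable argument in \cite{M2} and \cite{M3} circumvents this by adjoining one $\frac{1}{1+x_j^2}$ at a time: one defines $\overline L\bigl(f/(1+x_j^2)^m\bigr):=\lim_r L\bigl(f(q_{jr}\overline{q_{jr}})^m\bigr)$, proves the limit exists using the identity you quote together with Cauchy--Schwarz for the \emph{weighted} PSD forms $(g,h)\mapsto L(g\overline h\,q_{jr}\overline{q_{jr}})$ (for which the problematic factor becomes $L(|g|^2q_{jr}\overline{q_{jr}})\le L(|g|^2u_{j,r})$, which \emph{is} controlled), and then checks that the extended functional still satisfies (\ref{n}) in the remaining variables so the induction can continue. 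Since $f$ and $p$ involve only finitely many variables, this finite-variable lemma is all you need and your countability reduction is fine; but ``passing to the limit one variable at a time'' must be understood as this localization induction, not as a direct estimate from the unweighted form.
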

\begin{proof}
Argue as in \cite[Corollaries 4.7 and 4.8]{M2} and \cite[Theorem 0.1]{M3}.
\end{proof}
\begin{rem}\label{clarify}~
\begin{itemize}
	\item[(i)] For each $j\in \Omega$, condition (\ref{m}) is implied by condition (\ref{n}). 
	This is clear. Just take $p_{jk} = (x_j+\mathrm{i})q_{jk}\overline{q_{jk}}$.
    \item[(ii)] For each $j\in \Omega$, condition (\ref{n}) is implied by the Carleman condition
	\begin{equation}\label{o}
		\sum_{k=1}^{\infty} \frac{1}{\root 2k \of{L(x_j^{2k})}} = \infty.
	\end{equation}
	See \cite[Th\'eor\`eme 3]{BC} and \cite[Lemma 0.2 and Theorem 0.3]{M3} for the proof.
	\item[(iii)] The example in \cite{So} shows that (\ref{n}) is strictly weaker than (\ref{o}).
\end{itemize}
\end{rem}
Combining Proposition \ref{exactly one} and Remark \ref{clarify} (ii) yields the following result, which is an extension of 
Nussbaum's result in \cite{N}.
\begin{prop}\label{Nussbaum}
Suppose $L : A_{\Omega} \rightarrow \mathbb{R}$ is linear and PSD and, for each $j \in \Omega$, the Carleman condition (\ref{o}) holds.
Then there exists a unique constructibly Radon measure $\nu$ on $\mathbb{R}^{\Omega}$ such that $L = L_{\nu}$.
\end{prop}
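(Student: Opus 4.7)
The plan is to invoke Proposition \ref{exactly one} after verifying its hypothesis (\ref{n}) for each $j \in \Omega$ using Remark \ref{clarify}(ii). Fix $j \in \Omega$. The Carleman condition (\ref{o}) involves only the univariate moment sequence $(L(x_j^{2k}))_{k \ge 0}$, so the implication (\ref{o}) $\Rightarrow$ (\ref{n}) established in Remark \ref{clarify}(ii) (citing \cite[Th\'eor\`eme 3]{BC} and \cite[Lemma 0.2 and Theorem 0.3]{M3}) yields a sequence $\{q_{jk}\}_{k=1}^{\infty}$ in $\mathbb{C}[x_j] \subseteq A_{\Omega} \otimes_{\mathbb{R}} \mathbb{C}$ satisfying
$$\lim_{k \to \infty} L\bigl(\lvert 1 - (1+x_j^2)\, q_{jk}\overline{q_{jk}}\rvert^2\bigr) = 0,$$
which is precisely condition (\ref{n}) at index $j$.

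Since $j \in \Omega$ was arbitrary, condition (\ref{n}) holds at every coordinate. Proposition \ref{exactly one} then applies to the PSD functional $L$ and produces the unique constructibly Radon measure $\nu$ on $\mathbb{R}^{\Omega}$ with $L = L_{\nu}$. Both existence and uniqueness are delivered in one stroke.

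The only conceptual point worth pausing on is the legitimacy of using the univariate implication (\ref{o}) $\Rightarrow$ (\ref{n}) inside the multivariate algebra $A_{\Omega}$. This is not a genuine obstacle: the hypothesis (\ref{o}) is a statement about the one-dimensional moment sequence $(L(x_j^{2k}))_k$, while the conclusion (\ref{n}) only demands approximants $q_{jk} \in A_{\Omega} \otimes_{\mathbb{R}} \mathbb{C}$, and the univariate construction naturally produces $q_{jk} \in \mathbb{C}[x_j]$, which lies in the larger algebra. Thus no cross-variable interaction is needed, and the proposition is the clean packaging advertised just before its statement.
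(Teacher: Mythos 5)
Your proof is correct and follows exactly the paper's route: the paper derives this proposition by combining Remark \ref{clarify}(ii) (Carleman condition (\ref{o}) implies (\ref{n}) coordinatewise) with Proposition \ref{exactly one}. Your additional remark that the univariate approximants $q_{jk}\in\mathbb{C}[x_j]$ sit inside $A_{\Omega}\otimes_{\mathbb{R}}\mathbb{C}$ is a fair clarification of a point the paper leaves implicit.
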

\begin{rem} Condition (\ref{o}) holds in a large number of cases. It holds, for example, if there exists a constant $C_j >0$ such 
that $L(x_j^{2k})\le C_j(2k)!$ for all $k\ge 1$. It holds, in particular, if $L$ is continuous with respect to the vector space 
norm $\rho_w: A_{\Omega} \rightarrow [0,\infty)$ defined by $\rho_w(\sum a_{\alpha} x^{\alpha}) := \sum_{\alpha} |a_{\alpha}|w_{\alpha}$ 
where $w_{\alpha} := (2\lceil |\alpha|/2\rceil)!$, see \cite{L} for the definition of $\rho_w$ in case $|\Omega|<\infty$, or if $L$ 
is continuous with respect to the finest locally multiplicatively convex topology on $A_{\Omega}$, see \cite{GIKM} and \cite{GKM}.
\end{rem}
We mention another result of the same flavour which, in case $|\Omega|<\infty$, is due to Schm\"udgen; see \cite[Theorem 4.11]{M2} 
\cite[Proposition 1]{S}.

\begin{prop}\label{strong}
Suppose $L : A_{\Omega} \rightarrow \mathbb{R}$ is linear and PSD. For each $j\in \Omega$ fix a Radon measure $\mu_j$ on 
$\mathbb{R}$ such that $L|_{\mathbb{R}[x_j]} = L_{\mu_j}$ and suppose, for each $j \in \Omega$, that $\mathbb{C}[x_j]$ is dense in 
$\mathcal{L}^4(\mu_j)$, i.e.,
\begin{equation}\label{s}
\exists \text{ a sequence } \{q_{jk}\}_{k=1}^{\infty} \text{ in } \mathbb{C}[x_j] \text{ such that } \lim\limits_{k\rightarrow\infty}\|q_{jk}-\frac{1}{x_j-\mathrm{i}}\|_{4,\mu_j}=0.
\end{equation}
Then there exists a unique constructibly Radon measure $\mu$ on $\mathbb{R}^{\Omega}$ such that $L=L_{\mu}$.
\end{prop}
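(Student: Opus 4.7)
The plan is to verify condition (\ref{n}) of Proposition \ref{exactly one} for each $j \in \Omega$ and then invoke that proposition. Fix $j$ and set $f_j := 1/(x_j - \mathrm{i})$. Since $L|_{\mathbb{R}[x_j]} = L_{\mu_j}$ and $|1 - (1+x_j^2) q \overline{q}|^2 \in \mathbb{R}[x_j]$ whenever $q \in \mathbb{C}[x_j]$, verifying (\ref{n}) for this $j$ is a one-variable question on the marginal $\mu_j$:
\[
L(|1 - (1+x_j^2) q_{jk} \overline{q_{jk}}|^2) = \int (1 - (1+x_j^2)|q_{jk}|^2)^2 \, d\mu_j.
\]

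The key algebraic input is the identity
\[
(1+x_j^2)|q|^2 - 1 \;=\; 2\operatorname{Re}(y) + |y|^2, \qquad y := (x_j - \mathrm{i})(q - f_j),
\]
obtained by expanding $|q|^2$ around $|f_j|^2$ and using $(1+x_j^2)|f_j|^2 = 1$ and $(1+x_j^2)\overline{f_j} = x_j - \mathrm{i}$. It bounds the integrand above by $4|y|^2 + 4|y|^3 + |y|^4$, so, because $\mu_j$ is finite (as $\mu_j(\mathbb{R}) = L(1) < \infty$), the whole integral is controlled by $\|y\|_{4,\mu_j}^4$. By H\"older,
\[
\|y\|_{4,\mu_j} \;\le\; \|x_j-\mathrm{i}\|_{8,\mu_j} \cdot \|q-f_j\|_{8,\mu_j},
\]
and the first factor is finite since the density hypothesis (s) in particular forces $\mathbb{C}[x_j] \subseteq \mathcal{L}^4(\mu_j)$, hence all moments of $\mu_j$ are finite.

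The main obstacle is therefore the upgrade from the $\mathcal{L}^4(\mu_j)$-density of $\mathbb{C}[x_j]$ given by (s) to $\mathcal{L}^8(\mu_j)$-density, needed to produce polynomials $q_{jk} \in \mathbb{C}[x_j]$ with $\|q_{jk} - f_j\|_{8,\mu_j} \to 0$. This is the classical fact that, for a positive measure on $\mathbb{R}$ with moments of all orders, density of polynomials in $\mathcal{L}^p$ is independent of $p \in [2,\infty)$. Once this upgrade is in place, the preceding estimates verify (\ref{n}) for every $j \in \Omega$, and Proposition \ref{exactly one} produces the unique constructibly Radon measure $\mu$ on $\mathbb{R}^\Omega$ with $L = L_\mu$; the transition from finite to infinite $\Omega$ is automatic since (\ref{n}) only concerns one variable at a time, mirroring the finite-variable argument of \cite[Theorem 4.11]{M2}.
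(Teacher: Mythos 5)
Your reduction to a one--variable statement about the marginal $\mu_j$ is sound, and the identity $(1+x_j^2)|q|^2-1=2\operatorname{Re}(y)+|y|^2$ with $y=(x_j-\mathrm{i})(q-\tfrac{1}{x_j-\mathrm{i}})$ is correct; everything does come down to producing $q_{jk}\in\mathbb{C}[x_j]$ with $\|(x_j-\mathrm{i})(q_{jk}-\tfrac{1}{x_j-\mathrm{i}})\|_{4,\mu_j}\to 0$, i.e.\ with $q_{jk}\to\tfrac{1}{x_j-\mathrm{i}}$ in $\mathcal{L}^4((1+x_j^2)^2\mu_j)$. But the step you use to get this --- ``density of polynomials in $\mathcal{L}^p$ is independent of $p\in[2,\infty)$'' --- is not a classical fact; it is false. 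Density of $\mathbb{C}[x]$ in $\mathcal{L}^p(\mu)$ passes \emph{down} in $p$ (for a finite measure), but not up: this is precisely the critical--exponent phenomenon of Berg--Christensen \cite{BC} and Sodin \cite{So}, both cited in this paper, which exhibit measures with all moments finite whose critical exponent is any prescribed value in $[2,\infty]$. In particular there are measures for which polynomials are dense in $\mathcal{L}^4$ but not in $\mathcal{L}^8$, so the H\"older split $\|y\|_{4,\mu_j}\le\|x_j-\mathrm{i}\|_{8,\mu_j}\|q_{jk}-\tfrac{1}{x_j-\mathrm{i}}\|_{8,\mu_j}$ cannot be fed from hypothesis (\ref{s}); and no other H\"older split helps, since the unbounded weight $(1+x_j^2)^2$ forces an exponent strictly larger than $4$ on the error term.

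There is also a structural warning sign you should have caught: immediately after Proposition \ref{strong} the authors state that ``the exact relationship between (\ref{n}) and (\ref{s}) remains unclear.'' Your argument, if correct, would prove (\ref{s}) $\Rightarrow$ (\ref{n}) and settle that question, so it cannot be the intended route. The paper's actual proof does not pass through condition (\ref{n}) at all; it runs the argument of \cite[Theorem 4.11]{M2} (Schm\"udgen's method), where the $\mathcal{L}^4(\mu_j)$--density is used only through Cauchy--Schwarz to obtain $\mathcal{L}^2$--density statements with respect to the weighted measures $(1+x_j^2)^2\mu_j$ (note $\|g\|_{2,(1+x_j^2)^2\mu_j}^2\le\|g\|_{4,\mu_j}^2\,\|(1+x_j^2)^2\|_{2,\mu_j}$, which only costs a factor of $\mathcal{L}^4$, not $\mathcal{L}^8$); these quadratic estimates suffice to identify the extension of $L$ to the localization and prove its uniqueness by Hilbert--space methods. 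The quartic expression $L(|1-(1+x_j^2)q\overline{q}|^2)$ in (\ref{n}) is exactly what that approach avoids. To repair your write--up you would either need to prove the $\mathcal{L}^4\to\mathcal{L}^8$ upgrade for the specific marginals at hand (not possible in general) or abandon the reduction to Proposition \ref{exactly one} and follow the $\mathcal{L}^2$--based argument of \cite[Theorem 4.11]{M2}.
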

\begin{proof}
Argue as in \cite[Theorem 4.11]{M2}.
\end{proof}
One knows that (\ref{s}) is also strictly weaker than (\ref{o}).
The exact relationship between (\ref{n}) and (\ref{s}) remains unclear.

\section{The support of the measure}

We turn now to the problem of describing the support of the measure. As one might expect, the results we obtain are sharpest 
when $\Omega$ is countable.

We begin with an extension of Haviland's theorem \cite{hav}, \cite[Theorem 3.1.2]{M}.
Note that for a closed set $Y \subseteq \mathbb{R}^{\Omega}$ the following are equivalent:

(i) $Y$ is described by countably many inequalities of the form $\hat{g}\ge 0$, $g \in A_{\Omega}$, i.e., $\exists$ a countable 
subset $S$ of $A_{\Omega}$ such that $Y = X_S= \{ \alpha \in \mathbb{R}^{\Omega} \mid \hat{g}(\alpha)\ge 0 \ \forall \ g\in S\}$.

(ii) $\exists$ a countable subset $I \subseteq \Omega$ and a closed subset $Y'$ of $\mathbb{R}^I$ such that $Y = \pi^{-1}(Y')$, 
where $\pi : \mathbb{R}^{\Omega} \rightarrow \mathbb{R}^I$ is the projection.

The equivalence of (i) and (ii) is a consequence of Proposition \ref{countable}. If $\Omega$ is countable then any closed subset 
$Y$ of $\mathbb{R}^{\Omega}$ satisfies these conditions.
\begin{prop}\label{hav}
Suppose $L : A_{\Omega} \rightarrow \mathbb{R}$ is linear and $L(\operatorname{Pos}_{A_{\Omega}}(Y)) \subseteq [0,\infty)$ where 
$Y$ is a closed subset of $\mathbb{R}^{\Omega}$ satisfying either of the equivalent conditions (i), (ii).
Then there exists a constructibly Radon measure $\nu$ on $\mathbb{R}^{\Omega}$ supported by $Y$ such that $L = L_{\nu}$.
\end{prop}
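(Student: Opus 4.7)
The plan is to extend $L$ to $B_\Omega$, obtain a constructibly Radon measure $\nu$ via Proposition \ref{new}, and verify the support condition by passing to the compact space $X(C_\Omega)\cong\mathbb{S}^\Omega$. First I would invoke Proposition \ref{general extendibility} to extend $L$ to a linear functional $L':B_\Omega\to\mathbb{R}$ satisfying $L'(\operatorname{Pos}_{B_\Omega}(Y))\subseteq[0,\infty)$. Since $\sum B_\Omega^2\subseteq\operatorname{Pos}_{B_\Omega}(Y)$, the extension $L'$ is in particular PSD, and Proposition \ref{new} produces the sought constructibly Radon measure $\nu$ on $\mathbb{R}^\Omega$ with $L_\nu=L$. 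Let $\mu$ be the Radon measure on $X(C_\Omega)$ attached to $L'|_{C_\Omega}$ by Proposition \ref{main}, and let $\overline{Y}$ denote the closure of $Y$ in $X(C_\Omega)$.

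The crucial step is to show $\mu$ is supported by $\overline{Y}$. By inner regularity it suffices to prove $\mu(K)=0$ for every compact $K\subseteq X(C_\Omega)\setminus\overline{Y}$. Given such a $K$ and $\varepsilon>0$, Urysohn's lemma on the compact Hausdorff space $X(C_\Omega)$ yields a continuous $\phi:X(C_\Omega)\to[0,1]$ with $\phi=1$ on $K$ and $\phi=0$ on $\overline{Y}$, and the Stone--Weierstrass theorem (applied to the unital point-separating subalgebra $\hat{C}_\Omega\subseteq C(X(C_\Omega))$) supplies $f\in C_\Omega$ with $\|\hat{f}_C-\phi\|_\infty<\varepsilon$. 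Then $\hat{f}_C\le\varepsilon$ on $\overline{Y}\supseteq Y$, so $g:=\varepsilon-f$ satisfies $\hat{g}_B\ge 0$ on $Y$; the hypothesis on $L'$ gives $L'(g)\ge 0$, i.e., $L'(f)\le\varepsilon\mu(X(C_\Omega))$. On the other hand, $L'(f)=\int\hat{f}_C\,d\mu\ge\int\phi\,d\mu-\varepsilon\mu(X(C_\Omega))\ge\mu(K)-\varepsilon\mu(X(C_\Omega))$. Combining these bounds yields $\mu(K)\le 2\varepsilon\mu(X(C_\Omega))$, and letting $\varepsilon\to 0$ gives $\mu(K)=0$.

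To transfer this support statement from $\mu$ to $\nu$ via Remark \ref{support remark}, $\overline{Y}$ must be constructibly Borel in $X(C_\Omega)$. Using condition (ii), write $Y=\pi_I^{-1}(Y')$ with $I\subseteq\Omega$ countable and $Y'$ closed in $\mathbb{R}^I$. Identifying $X(C_\Omega)\cong\mathbb{S}^I\times\mathbb{S}^{\Omega\setminus I}$ and using density of $\mathbb{R}$ in $\mathbb{S}$, one verifies $\overline{Y}=\rho_I^{-1}(\overline{Y'})$, where $\overline{Y'}$ denotes the closure of $Y'$ in $X(C_I)\cong\mathbb{S}^I$ and $\rho_I:X(C_\Omega)\to X(C_I)$ is the canonical restriction map. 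Since $I$ is countable, Proposition \ref{countable} makes the closed set $\overline{Y'}$ constructibly Borel in $X(C_I)$, and its preimage under $\rho_I$ is therefore constructibly Borel in $X(C_\Omega)$. Remark \ref{support remark} then concludes that $\nu$ is supported by $\overline{Y}\cap X(B)=Y$, the last equality holding because $Y$ is closed in $\mathbb{R}^\Omega$ with the subspace topology inherited from $\mathbb{S}^\Omega$. The main obstacle is the approximation calculation in the second paragraph --- it is the only place the hypothesis on $L$ enters essentially --- while the constructibility verification is a secondary technicality that is precisely what makes condition (ii) necessary.
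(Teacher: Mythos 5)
Your proof is correct, and on the decisive step it takes a genuinely different route from the paper. Both arguments share the same frame: extend $L$ to $B_{\Omega}$ via Proposition \ref{general extendibility}, obtain $\nu$ from Proposition \ref{new} and the Radon measure $\mu$ on $X(C_{\Omega})$ from Proposition \ref{main}, and finish with Remark \ref{support remark}. Where you diverge is in showing that $\mu$ is supported by a suitable constructibly Borel set. The paper works from condition (i): it fixes a countable $S$ with $Y=X_S$, passes to the ``compactified'' generators $g'=g/p_g\in C_{\Omega}$, and invokes \cite[Corollary 3.4]{M1} to conclude that $\mu$ is supported by $X_{Q'}$, which is constructibly Borel as a countable intersection of constructible sets and satisfies $X_{Q'}\cap X(B_{\Omega})=Y$. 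You instead work from condition (ii) and prove directly, by inner regularity plus a Urysohn/Stone--Weierstrass approximation exploiting $L'(\operatorname{Pos}_{B_{\Omega}}(Y))\subseteq[0,\infty)$, that $\mu(K)=0$ for every compact $K$ disjoint from $\overline{Y}$, and then check that $\overline{Y}=\rho_I^{-1}(\overline{Y'})$ is constructibly Borel. Your approach is more self-contained (it avoids the quadratic-module representation result from \cite{M1}), it uses the full positivity hypothesis on $\operatorname{Pos}_{B_{\Omega}}(Y)$ rather than only positivity on the quadratic module generated by $S$, and it identifies the a priori sharper supporting set $\overline{Y}\subseteq X_{Q'}$; the price is the topological bookkeeping in verifying $\overline{Y}=\rho_I^{-1}(\overline{Y'})$ (which does go through, using that basic open sets of $\mathbb{S}^{\Omega}$ constrain only finitely many coordinates and that $\mathbb{R}$ is dense in $\mathbb{S}$), whereas the paper gets constructibility of its supporting set for free from the countability of $S$. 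Two minor points worth making explicit in a final write-up: the Stone--Weierstrass step needs the observation that the image of $C_{\Omega}$ in $C(X(C_{\Omega}))$ is a unital point-separating subalgebra (immediate from the definition of the character space), and the identity $\overline{Y}\cap X(B_{\Omega})=Y$ uses that the subspace topology on $X(B_{\Omega})\subseteq X(C_{\Omega})$ is the product topology on $\mathbb{R}^{\Omega}$.
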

\begin{proof}By Proposition \ref{general extendibility} there exists an extension of $L$ to a linear functional $L$ on $B_{\Omega}$ 
such that $L(\operatorname{Pos}_{B_{\Omega}}(Y)) \subseteq [0,\infty)$. Denote by $\nu$ the constructibly Radon measure on 
$\mathbb{R}^{\Omega}$ corresponding to this extension. Fix a countable set $S$ in $A_{\Omega}$ such that $Y= X_S$. 
For each $g \in S$, choose $g' \in C_{\Omega}$ of the form $g' = g/p_g$ for some suitably chosen element 
$p_g = (1+x_{j_1}^2)^{e_1}\dots (1+x_{j_k}^2)^{e_k}$. Let $S' = \{ g' \mid g\in S\}$. 
Let $Q' =$ the quadratic module of $C_{\Omega}$ generated by $S'$, $Q =$ the quadratic module of $B_{\Omega}$ generated by $S$. 
Note that $Q$ is also the quadratic module in $B_{\Omega}$ generated by $S'$, and $Q' \subseteq Q \subseteq  \operatorname{Pos}_{B_{\Omega}}(Y)$, 
so $L'(Q') \subseteq [0,\infty)$ where $L' := L|_{C_{\Omega}}$. 
By \cite[Corollary 3.4]{M1} there exists a Radon measure $\mu$ on $X(C_{\Omega})$ supported by $X_{Q'}$ such that 
$L'(f) = \int \hat{f} d\mu$ $\forall$ $f \in C_{\Omega}$. Uniqueness implies that $\mu$ is the Radon measure on $X(C_{\Omega})$ defined 
in Proposition \ref{main}. Remark \ref{support remark} implies that $\nu$ is supported by $X_{Q'}\cap X(B_{\Omega}) =X_{Q} = X_S =Y$.
\end{proof}
Our next results extend \cite[Corollary 0.6]{M3} and \cite[Remark 0.7]{M3}.
\begin{prop}\label{support}
Suppose $L : A_{\Omega} \rightarrow \mathbb{R}$ is a PSD linear map, (\ref{n}) holds for each $j \in \Omega$,
and $L(M)\subseteq [0,\infty)$ for some quadratic module $M$ of $A_{\Omega}$ which is the extension of a quadratic module of $A_I$ 
for some countable $I \subseteq \Omega$.
Then the associated constructibly Radon measure $\nu$ on $\mathbb{R}^{\Omega}$ is supported by $X_M$.
\end{prop}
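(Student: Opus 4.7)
The plan is to reduce to the case of countably many variables and then transpose the proof of \cite[Corollary 0.6]{M3} to that setting. Let $M_0$ be the quadratic module of $A_I$ whose extension to $A_\Omega$ is $M$, let $\pi : \mathbb{R}^\Omega \rightarrow \mathbb{R}^I$ be the projection, and let $\nu$ be the unique constructibly Radon measure associated to $L$ by Proposition \ref{exactly one}. Because every element of $M_0$ involves only the coordinates indexed by $I$, the identity $X_M = \pi^{-1}(X_{M_0})$ holds (viewing $X_{M_0}$ as a closed subset of $\mathbb{R}^I$), and because $\nu$ is constructibly Radon, the pushforward $\nu_0 := \pi_*\nu$ is a Radon measure on $\mathbb{R}^I$. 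A routine change of variables yields that $\nu_0$ represents $L|_{A_I}$, which is itself PSD and satisfies $L|_{A_I}(M_0)\subseteq [0,\infty)$.

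The heart of the argument is to show that $\nu_0$ is supported by $X_{M_0}$. For this one transposes the proof of \cite[Corollary 0.6]{M3} to the setting of the countably-generated $\mathbb{R}$-algebra $A_I$. The analytic inputs of that proof are condition (\ref{n}) for each relevant $j$, which guarantees $L^2$-approximability of $1/(x_j-\mathrm{i})$ by polynomials, together with PSD-ness and $L(M_0)\subseteq [0,\infty)$; the quadratic-module and localisation manipulations that piece these together are all finitary, so the passage from $|I|<\infty$ to $I$ merely countable introduces no fresh difficulty once the approximation input is in place.

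Once $\nu_0$ is known to be supported by $X_{M_0}\subseteq \mathbb{R}^I$, the conclusion in $\mathbb{R}^\Omega$ follows immediately. Any constructibly Borel $E\subseteq \mathbb{R}^\Omega$ with $E\cap X_M=\emptyset$ lies in $\pi^{-1}(\mathbb{R}^I\setminus X_{M_0})$, so $\nu(E)\le \pi_*\nu(\mathbb{R}^I\setminus X_{M_0}) = \nu_0(\mathbb{R}^I\setminus X_{M_0}) = 0$, giving the desired support statement.

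The principal obstacle is the transfer of (\ref{n}) from $L$ on $A_\Omega$ to the pushed-forward measure $\nu_0$ on $\mathbb{R}^I$ in a form usable by the \cite{M3} argument: the sequences $q_{jk}\in A_\Omega\otimes\mathbb{C}$ provided by the hypothesis, for $j\in I$, may involve variables outside $I$. One circumvents this either by noting that the \cite{M3} argument uses only $L^2$-approximation of $1/(x_j-\mathrm{i})$ --- which descends from $\nu$ to $\nu_0$ via a conditional-expectation/Jensen estimate --- or by replacing the $q_{jk}$ by $A_I\otimes\mathbb{C}$-polynomials approximating them in the relevant $L^s$-norm, using the density statement of Proposition \ref{lp spaces}. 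A secondary point is the verification that all the algebraic manipulations in the finite-variable argument depend only on finitely many variables at a time, so that the countable index set $I$ does not obstruct their use.
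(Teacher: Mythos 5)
Your overall strategy --- push $\nu$ forward to $\mathbb{R}^I$, prove the support statement there, and pull back --- differs from the paper's, and as written it has a genuine gap at exactly the point you flag as ``the principal obstacle''. Neither of your two proposed circumventions works. Condition (\ref{n}) is not merely an $L^2$-approximation of $\frac{1}{x_j-\mathrm{i}}$ by polynomials: the approximants must be hermitian squares $q_{jk}\overline{q_{jk}}$, and this structure is what the argument of \cite[Theorem 0.5]{M3} actually uses (one needs $L(g\,p\,q\overline{q})\ge 0$, which is available only because $L$ is nonnegative on products of elements of $M$ with hermitian squares). A conditional-expectation/Jensen estimate does give $\int |1-(1+x_j^2)E[q_{jk}\overline{q_{jk}}\mid \mathcal{F}_I]|^2\, d\nu \le L(|1-(1+x_j^2)q_{jk}\overline{q_{jk}}|^2)$, but $E[q_{jk}\overline{q_{jk}}\mid\mathcal{F}_I]$ is only a nonnegative measurable function of the $I$-coordinates, not a hermitian square of an element of $A_I\otimes\mathbb{C}$, so condition (\ref{n}) for $L|_{A_I}$ does not follow. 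Your second fix fails for a more basic reason: Proposition \ref{lp spaces} gives density of $B_\Omega\otimes\mathbb{C}$ in $\mathcal{L}^s(\nu)$; it does not allow you to approximate a $q_{jk}$ that genuinely depends on a variable outside $I$ by elements of $A_I\otimes\mathbb{C}$, and in general no such approximation exists (e.g.\ $x_{j'}$ with $j'\notin I$ need not be $\nu$-a.e.\ a function of the $I$-coordinates). So the reduction to $A_I$ is not established, and it is the heart of your argument.

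The descent is in fact unnecessary, and this is how the paper avoids the problem. One first runs the \cite[Theorem 0.5]{M3} argument on the full algebra, where hypothesis (\ref{n}) is directly usable with $q_{jk}\in A_\Omega\otimes\mathbb{C}$: extending $L$ to $B_\Omega$ via $\nu$, one obtains $L(gh\overline{h})\ge 0$ for all $g\in M$ and $h\in B_\Omega\otimes\mathbb{C}$, hence $L(Q)\subseteq[0,\infty)$ for the extension $Q$ of $M$ to $B_\Omega$. Only after this analytic step does the countability of $I$ enter: it guarantees that $X_{Q'}$, with $Q'=Q\cap C_\Omega$, is constructibly Borel (being $\pi^{-1}(X_{Q_0'})$ for the restriction $\pi:X(C_\Omega)\to X(C_I)$), so that \cite[Corollary 3.4]{M1} together with Remark \ref{support remark} yields that $\nu$ is supported by $X_{Q'}\cap X(B_\Omega)=X_M$. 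Your bookkeeping at either end (the identity $X_M=\pi^{-1}(X_{M_0})$, the Radonness of $\pi_*\nu$, and the final pull-back of the null set) is correct and could be grafted onto this global positivity step, but the middle of your argument --- transposing \cite[Corollary 0.6]{M3} to $A_I$ --- cannot be carried out from the stated hypotheses.
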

An earlier version of Proposition \ref{support} is proved already in \cite[Theorem 2.2]{L}.
\begin{proof}
Denote by $L : B_{\Omega} \rightarrow \mathbb{R}$ the PSD linear extension of $L$ defined by 
$L(f) := \int \hat{f} d\nu$ $\forall$ $f\in B_{\Omega}$. Arguing as in \cite[Theorem 0.5]{M3} one sees that 
$L(gh\overline{h})\ge 0$ $\forall$ $h \in B_{\Omega}\otimes \mathbb{C}$ (so, in particular, $L(gh^2)\ge 0$ $\forall$ $h\in B_{\Omega}$) 
$\forall$ $g\in M$.
Denote by $Q$ the extension of $M$ to $B_{\Omega}$. It follows that $L(Q)\subseteq [0,\infty)$. By hypothesis, $Q$ is the extension of 
a quadratic module $Q_0$ of $B_I$, so $Q' := Q\cap C_{\Omega}$ is the extension of $Q_0' := Q_0\cap C_I$, for some countable 
$I \subseteq \Omega$. Then $X_{Q'} = \pi^{-1}(X_{Q_0'})$, where $\pi : X(C_{\Omega}) \rightarrow X(C_I)$ denotes the restriction, 
so $X_{Q'}$ is constructibly Borel.  By \cite[Corollary 3.4]{M1} the Radon measure $\mu$ on $X(C_{\Omega})$ associated to 
$L' = L|_{C_{\Omega}}$ is supported by $X_{Q'}$, so, by Remark \ref{support remark}, $\nu$ is supported by $X_M = X_Q = X_{Q'}\cap X(B_{\Omega})$.
\end{proof}
For a quadratic module of the form $M = \sum A_{\Omega}^{\ 2}+J$, $J$ an  ideal of $A_{\Omega}$ one can weaken the hypothesis a bit.
\begin{prop}\label{zeros}
Suppose $L = L_{\nu}$ for some constructibly Radon measure $\nu$ on $\mathbb{R}^{\Omega}$ and $L(J) = \{ 0\}$ for some countably 
generated ideal $J$ of $A_{\Omega}$. Then $\nu$ is supported by $Z(J)$. 
Here, $Z(J) := \{ \alpha \in \mathbb{R}^{\Omega} \mid \hat{g}(\alpha)=0 \ \forall g\in J\}$.
\end{prop}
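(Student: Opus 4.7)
The plan is to exploit the fact that $J$ is an ideal: for any $g \in J$ we have $g^2 \in J$, so $L(g^2) = \int \hat{g}^{\,2}\,d\nu = 0$, which forces $\hat{g} = 0$ $\nu$-almost everywhere. Combining this with the countable generation of $J$ will give the result by a standard countable-union-of-null-sets argument.

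First I would fix a countable generating set $\{g_n\}_{n\ge 1}$ of $J$ and write
\[
Z(J) \;=\; \bigcap_{n\ge 1} Z(g_n), \qquad Z(g_n) := \{\alpha \in \mathbb{R}^{\Omega} \mid \hat{g}_n(\alpha)=0\}.
\]
Each $g_n$ involves only finitely many variables, so $Z(g_n)$ is the preimage of a closed subset of a finite-dimensional Euclidean space under the canonical projection $\mathbb{R}^{\Omega} \to \mathbb{R}^{I_n}$, hence is constructibly Borel. Therefore $Z(J)$, being a countable intersection of constructibly Borel sets, lies in the $\sigma$-algebra on which $\nu$ is defined, and so does each complement $\mathbb{R}^{\Omega}\setminus Z(g_n)$.

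Next, since $J$ is an ideal, $g_n^{\,2} \in J$, and so $L(g_n^{\,2})=0$. Because $\nu$ has finite moments (see Section 4), $L(g_n^{\,2}) = \int \hat{g}_n^{\,2}\,d\nu$, and the vanishing of the integral of a nonnegative measurable function gives $\hat{g}_n = 0$ $\nu$-a.e., i.e. $\nu(\mathbb{R}^{\Omega}\setminus Z(g_n)) = 0$ for every $n$. Taking a countable union,
\[
\nu\bigl(\mathbb{R}^{\Omega}\setminus Z(J)\bigr) \;\leq\; \sum_{n\ge 1}\nu\bigl(\mathbb{R}^{\Omega}\setminus Z(g_n)\bigr) \;=\; 0.
\]
Consequently, any constructibly Borel $E$ with $E \cap Z(J) = \emptyset$ satisfies $E \subseteq \mathbb{R}^{\Omega}\setminus Z(J)$ and hence $\nu(E) = 0$, which is exactly the condition that $\nu$ is supported by $Z(J)$.

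The proof is essentially a one-line observation once measurability is settled, and measurability is the only real obstacle: it is precisely the countable generation hypothesis on $J$ that makes $Z(J)$ a constructibly Borel set. Without this hypothesis $Z(J)$ would be an uncountable intersection of constructibly Borel sets and might fail to lie in the $\sigma$-algebra carrying $\nu$, so the present argument would not apply. This also explains the parallel with Proposition \ref{support}, where an analogous countability hypothesis on the quadratic module plays the same role.
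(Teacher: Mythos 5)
Your proof is correct, but it takes a genuinely different and more elementary route than the paper's. The paper treats the hypothesis as a positivity statement for the quadratic module $M=\sum A_{\Omega}^{\,2}+J$: it extends $L$ to $B_{\Omega}$ via $\nu$, uses the Cauchy--Schwarz inequality to show that $L(g^2)=0$ forces $L(gh)=0$ for all $h\in B_{\Omega}$ (hence $L$ kills the extended ideal $JB_{\Omega}$ and is nonnegative on the extended quadratic module $Q$), and then falls back on the localization machinery already set up for Proposition \ref{support} --- the Radon measure on $X(C_{\Omega})$ supported by $X_{Q\cap C_{\Omega}}$ and Remark \ref{support remark} --- where the countable generation of $J$ is what makes $X_{Q\cap C_{\Omega}}$ constructibly Borel. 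You instead argue directly on $\mathbb{R}^{\Omega}$: $g_n^{\,2}\in J$ gives $\int\hat{g}_n^{\,2}\,d\nu=0$, hence $\hat{g}_n=0$ $\nu$-a.e., and countable generation enters only to write $Z(J)=\bigcap_n Z(g_n)$ as a countable intersection of constructibly Borel sets so that $\mathbb{R}^{\Omega}\setminus Z(J)$ is a $\nu$-null countable union. Your argument is self-contained, avoids passing through $B_{\Omega}$ and $C_{\Omega}$ entirely, and makes transparent that the conclusion holds for \emph{every} representing measure $\nu$ of $L$; what the paper's route buys is uniformity with the proof of Proposition \ref{support} (which is why the authors can end with ``at this point everything is clear''), since for a general quadratic module $M$ one cannot reduce $X_M$ to a countable intersection of zero sets and the Cauchy--Schwarz/localization detour is genuinely needed there. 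Your closing observation --- that countable generation is exactly what keeps $Z(J)$ inside the $\sigma$-algebra carrying $\nu$ --- correctly identifies the role of that hypothesis.
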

\begin{proof}
Let $M = \sum A_{\Omega}^{\ 2}+J$.  Since $L$ is PSD the hypothesis on $J$ is equivalent to $L(M)\subseteq [0,\infty)$. 
The extension of $M$ to $B_{\Omega}$ is $Q = \sum B_{\Omega}^2+JB_{\Omega}$, where $JB_{\Omega}$ denotes the extension of $J$ to 
$B_{\Omega}$. Extend $L$  to $B_{\Omega}$ in the obvious way, i.e., $L(f) = \int \hat{f} d \mu$ $\forall$ $f\in B_{\Omega}$. 
By the Cauchy-Schwartz inequality, for $g\in A_{\Omega}$, 
\[
	L(gh)=0 \ \forall \ h\in A_{\Omega} \ \Leftrightarrow \ L(g^2)=0 \ \Leftrightarrow \ L(gh)=0 \ \forall \ h\in B_{\Omega}.
\]
This implies $L(JB_{\Omega}) = \{0\}$, i.e., $L(Q) \subseteq [0,\infty)$. At this point everything is clear.
\end{proof}
A special feature of the following result is that the measure $\nu$ obtained is Radon (not just constructibly Radon).
\begin{prop}\label{radon}
Suppose $M$ is a quadratic module of $A_{\Omega}$ and there exists a countable subset $I$ of $\Omega$ such that the quadratic 
module $M\cap A_{\Omega \backslash I}$  of $A_{\Omega \backslash I}$ is archimedean. Suppose $L : A_{\Omega} \rightarrow \mathbb{R}$ 
is linear, $L(M) \subseteq [0,\infty)$, and (\ref{n}) holds for each $j \in I$. Then there exists a unique Radon measure $\nu$ on 
$\mathbb{R}^{\Omega}$ such that $L = L_{\nu}$. Moreover, $\nu$ is supported by $X_M$.
\end{prop}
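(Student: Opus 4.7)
The plan is to bootstrap from Proposition \ref{exactly one}: first produce the unique constructibly Radon measure $\nu$, then use the archimedean hypothesis on $A_{\Omega\setminus I}$ to upgrade $\nu$ to a Radon measure, and finally describe its support by the $B_\Omega$-extension argument from the proof of Proposition \ref{support}.

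Since $M$ is a quadratic module, $\sum A_\Omega^2\subseteq M$, so $L$ is automatically PSD. I then need condition (\ref{n}) at every $j\in\Omega$ in order to invoke Proposition \ref{exactly one}; for $j\in I$ this is given, and for $j\in J:=\Omega\setminus I$ I would establish the stronger Carleman condition (\ref{o}). Pick $k_j\in\mathbb{N}$ with $k_j\pm x_j\in M\cap A_J$; then $|\hat x_j|\le k_j$ on $X_{M\cap A_J}$, so $k_j^{2n}-x_j^{2n}$ is nonnegative there. Jacobi's theorem (Proposition \ref{jacobi}) then yields $k_j^{2n}-x_j^{2n}+\epsilon\in M\cap A_J$ for every $\epsilon>0$; applying $L$ and letting $\epsilon\to 0$ gives $L(x_j^{2n})\le k_j^{2n}L(1)$, which implies Carleman and hence (\ref{n}) by Remark \ref{clarify}(ii). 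Proposition \ref{exactly one} now supplies the unique constructibly Radon $\nu$ with $L=L_\nu$; the PSD extension $f\mapsto\int\hat f\,d\nu$ of $L$ to $B_\Omega$ together with Proposition \ref{main} furnishes the associated Radon measure $\mu$ on $X(C_\Omega)\cong\mathbb{S}^\Omega$.

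The key step is the Radon upgrade. From $L(x_j^{2n})\le k_j^{2n}L(1)$ and a Chebyshev estimate, the marginal $\nu^j$ of $\nu$ is supported on $[-k_j,k_j]$ for each $j\in J$, equivalently, the marginal $\mu_j$ of $\mu$ on the $j$-th factor of $\mathbb{S}^\Omega$ is supported on the compact arc $S_j\subset\mathbb{S}\setminus\{(0,0)\}$ that is the image of $[-k_j,k_j]$ under $\mathbb{R}\hookrightarrow\mathbb{S}$. Since $\mu$ is a Radon measure on the compact Hausdorff space $\mathbb{S}^\Omega$, each open slice $\{\beta:\beta_j\notin S_j\}$ ($j\in J$) has $\mu$-measure zero and therefore misses $\operatorname{supp}(\mu)$; so $\operatorname{supp}(\mu)\subseteq\prod_{j\in J}S_j\times\prod_{i\in I}\mathbb{S}$. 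Removing the countable $\mu$-null union $\bigcup_{i\in I}\Delta_i$ (Proposition \ref{main}) exhibits $\mu$ as supported by the Borel set
\[
T:=\prod_{j\in J}S_j\times\prod_{i\in I}(\mathbb{S}\setminus\{(0,0)\})\ \subseteq\ X(B_\Omega).
\]
By the implication $(4)\Rightarrow(1)$ of Proposition \ref{addendum} there is a Radon measure on $X(B_\Omega)=\mathbb{R}^\Omega$ inducing $L$ via integration, and the uniqueness in Proposition \ref{new} identifies it with $\nu$.

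Finally, for the support, I would follow the proof of Proposition \ref{support} essentially verbatim: the \cite[Theorem 0.5]{M3} argument shows $L(Q)\subseteq[0,\infty)$ for the extension $Q$ of $M$ to $B_\Omega$, and hence $L(Q')\subseteq[0,\infty)$ for $Q':=Q\cap C_\Omega$; by \cite[Corollary 3.4]{M1}, $\mu$ is supported by the closed, hence Borel, set $X_{Q'}\subseteq X(C_\Omega)$. The last assertion of Remark \ref{support remark} now applies---precisely because $\nu$ is Radon---to give $\nu$ supported by $X_{Q'}\cap X(B_\Omega)$, and a routine clearing-of-denominators calculation identifies this intersection with $X_M$. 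Uniqueness of the Radon $\nu$ is immediate from the uniqueness of the constructibly Radon one via Propositions \ref{main} and \ref{addendum}. The principal obstacle is precisely the Radon upgrade: for uncountable $\Omega$, the exceptional set $X(C_\Omega)\setminus X(B_\Omega)=\bigcup_{j\in\Omega}\Delta_j$ is an uncountable union of null sets, so Remark \ref{question} is unavailable; the archimedean hypothesis on $A_J$ is exactly what replaces this unmanageable union by the compact factor $\prod_{j\in J}S_j$.
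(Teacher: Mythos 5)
Your proposal is correct and follows the paper's overall strategy: Jacobi's theorem gives $L(x_j^{2k})\le N_j^{2k}L(1)$ for $j\in\Omega\setminus I$, hence the Carleman condition (\ref{o}) and thus (\ref{n}) there, so Proposition \ref{exactly one} produces the unique constructibly Radon $\nu$; one then exhibits a Borel set $E\subseteq X(B_\Omega)$ supporting the measure $\mu$ of Proposition \ref{main} and invokes the implication (4)~$\Rightarrow$~(1) of Proposition \ref{addendum}. Where you genuinely diverge is in how that set $E$ is produced. The paper's argument is algebraic: since $N_j^2-x_j^2\in Q$ for $j\notin I$, the element $\frac{1}{1+x_j^2}-\frac{1}{1+N_j^2}=\frac{N_j^2-x_j^2}{(1+x_j^2)(1+N_j^2)}$ lies in $Q\cap C_\Omega$, so every $\alpha\in X_{Q\cap C_\Omega}$ satisfies $\alpha(\frac{1}{1+x_j^2})\ge\frac{1}{1+N_j^2}>0$; hence $E:=X_{Q\cap C_\Omega}\setminus\bigcup_{j\in I}\Delta_j$ is already contained in $X(B_\Omega)$, and this one set does double duty, giving both the Radon upgrade and the support statement $\nu$ supported by $X_{Q\cap C_\Omega}\cap X(B_\Omega)=X_M$. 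Your argument is measure-theoretic: the one-variable marginals for $j\notin I$ are, by Chebyshev, concentrated on the compact arcs $S_j$, and inner regularity of the Radon measure $\mu$ on the compact space $\mathbb{S}^\Omega$ (equivalently, the fact that the complement of $\operatorname{supp}(\mu)$ is $\mu$-null) lets you discard the uncountable union of open null slices $\{\beta\mid\beta_j\notin S_j\}$. This is a valid substitute --- the appeal to inner regularity is exactly what is needed to control uncountably many indices $j\notin I$, and your identification of the marginal with the measure $\nu_{\{j\}}$ representing $L|_{B_{\{j\}}}$ is justified by Propositions \ref{main} and \ref{new} --- at the cost of having to run the support argument of Proposition \ref{support} separately afterwards, which you do correctly via Remark \ref{support remark} (legitimately in its strong form, since by then $\nu$ is known to be Radon). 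Both routes prove the same statement; the paper's is slightly more economical.
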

Special cases:
\begin{itemize}
	\item[(i)] If $M$ is an archimedean quadratic module of $A_{\Omega}$ then Proposition \ref{radon} applies, taking $I = \emptyset$.
	\item[(ii)] If $\Omega$ is countable then Proposition \ref{radon} applies to any quadratic module $M$ of $A_{\Omega}$, taking $I=\Omega$ 
	(so $M\cap A_{\Omega\backslash I} = M\cap A_{\emptyset} = M\cap \mathbb{R} = \{ r \in \mathbb{R} \mid r\ge 0\}$, a quadratic module of 
	$\mathbb{R}$ which is obviously archimedean).
\end{itemize}
\begin{proof}
By hypothesis, there exists $N_j >0$ such that $N_j^2-x_j^2 \in M$ for each $j\in \Omega \backslash I$. It follows, e.g., using 
proposition \ref{jacobi}, that $L(x_j^{2k}) \le N_j^{2k}L(1)$, so (\ref{o}) holds for each $j\in \Omega \backslash I$. 
By Proposition \ref{exactly one} there exists a unique constructibly Radon measure $\nu$ on $\mathbb{R}^{\Omega}$ such that $L = L_{\nu}$. 
Extending $L$ to $B_{\Omega}$ in the obvious way and arguing as in Proposition \ref{support} we see that $L(Q) \subseteq [0,\infty)$ 
where $Q$ is the extension of $M$ to $B_{\Omega}$. By \cite[Corollary 3.4]{M1} there exists a Radon measure $\mu$ on $X(C_{\Omega})$ 
supported by $X_{Q\cap C_{\Omega}}$ such that $L(f) = \int \hat{f}_{C_{\Omega}}d\mu$ $\forall$ $f\in C_{\Omega}$. By Proposition \ref{main} 
$\mu$ is supported by the Borel set $E := X_{Q\cap C_{\Omega}} \backslash \cup_{j\in I} \Delta_j$. According to Proposition \ref{addendum} 
it suffices to show $E \subseteq X(B_{\Omega})$. But this is clear. Let $\alpha \in E$. If $j\in \Omega \backslash I$ then $N_j^2-x_j^2 \in Q$ 
so $\frac{1}{1+x_j^2}-\frac{1}{1+N_j^2} = \frac{N_j^2-x_j^2}{(1+x_j^2)(1+N_j^2)} \in Q\cap C_{\Omega}$. 
Thus $\alpha(\frac{1}{1+x_j^2}) \ge \frac{1}{1+N_j^2}$. If $j\in I$ then $\alpha \notin \Delta_j$, so $\alpha(\frac{1}{1+x_j^2})>0$.
\end{proof}
\section{Cylinder results}

Fix $i_0 \in \Omega$ and let $\Omega' := \Omega \backslash \{ i_0\}$. Consider the subalgebras 
$A_{\Omega} \subseteq B_{\Omega'}[x_{i_0}] \subseteq B_{\Omega}$ and $C_{\Omega'}[x_{i_0}] \subseteq C_{\Omega}$. 
Observe that $X(B_{\Omega'}[x_{i_0}])$ is naturally identified with $\mathbb{R}^{\Omega}$ and $X(C_{\Omega'}[x_{i_0}])$ is naturally
identified with $\mathbb{S}^{\Omega'}\times \mathbb{R}$.

The cylinder results in \cite[Section 4]{M2} and \cite{M3} extend in a straightforward way to the case where $\Omega$ is infinite. 
As a consequence, we are able to strengthen slightly the statement of Propositions \ref{exactly one}, \ref{Nussbaum} and \ref{strong}.
\begin{prop} \label{c1} \

(1) For $f \in C_{\Omega'}[x_{i_0}]$, $\hat{f} \ge 0$ on $\mathbb{S}^{\Omega'}\times \mathbb{R}$ iff $\exists$ $k\ge 0$ such that 
$f+\epsilon(1+x_{j_0}^2)^k \in \sum C_{\Omega'}[x_{j_0}]^2$ $\forall$ real $\epsilon>0$.

(2) For $f\in B_{\Omega'}[x_{i_0}]$, $\hat{f} \ge 0$ on $\mathbb{R}^{\Omega}$ iff $\exists$ $q$ of the form 
$q= \prod_{k=1}^n (1+x_{i_k}^2)^{\ell_k}$, where $x_{i_1},\dots,x_{i_n}$ are the variables appearing in the coefficients of $f$ and 
$k\ge 0$ such that $f+\epsilon q(1+x_{i_0}^2)^k \in \sum B_{\Omega'}[x_{i_0}]^2$ $\forall$ real $\epsilon >0$.
\end{prop}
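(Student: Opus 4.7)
The plan is to reduce to the finite-variable case treated in \cite[Section 4]{M2} and \cite{M3}. In both (1) and (2) the direction $(\Leftarrow)$ is immediate: if $f+\epsilon q'\in \sum R^{\,2}$ for the relevant positive weight $q'$ and every $\epsilon>0$ (with $R=C_{\Omega'}[x_{i_0}]$ in (1) and $R=B_{\Omega'}[x_{i_0}]$ in (2)), then $\hat f(\alpha)\ge -\epsilon\,\hat{q'}(\alpha)$ pointwise on the character space, and letting $\epsilon\downarrow 0$ yields $\hat f\ge 0$.

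For the hard direction $(\Rightarrow)$, I would exploit the fact that any $f\in C_{\Omega'}[x_{i_0}]$ (resp.\ $f\in B_{\Omega'}[x_{i_0}]$) mentions only finitely many variables. First I would choose a finite $I\subseteq\Omega'$ containing every index of a variable other than $x_{i_0}$ appearing in $f$, so that $f\in C_I[x_{i_0}]$ (resp.\ $f\in B_I[x_{i_0}]$). Since $\hat f$ then factors through the surjective canonical projection $\mathbb{S}^{\Omega'}\times\mathbb{R}\twoheadrightarrow\mathbb{S}^I\times\mathbb{R}$ (resp.\ $\mathbb{R}^{\Omega}\twoheadrightarrow\mathbb{R}^{I\cup\{i_0\}}$), the hypothesis $\hat f\ge 0$ on the infinite-variable character space is equivalent to $\hat f\ge 0$ on its finite-variable counterpart. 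I would then invoke the finite-variable cylinder Positivstellensatz from \cite[Section 4]{M2} (and \cite{M3}) to produce $k\ge 0$ and, in case (2), a factor $q=\prod_{k=1}^{n}(1+x_{i_k}^2)^{\ell_k}$ with $i_1,\dots,i_n\in I$, such that the claimed sum-of-squares decomposition holds in $\sum C_I[x_{i_0}]^{\,2}$ (resp.\ $\sum B_I[x_{i_0}]^{\,2}$). Because $C_I[x_{i_0}]\subseteq C_{\Omega'}[x_{i_0}]$ (resp.\ $B_I[x_{i_0}]\subseteq B_{\Omega'}[x_{i_0}]$), the representation persists in the larger ring, completing the argument.

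The main obstacle is genuinely the finite-variable cylinder Positivstellensatz itself, which carries the actual content of the proposition. In spirit, the finite-case proof proceeds by dividing $f$ by a sufficiently high power $(1+x_{i_0}^2)^k$ to land in the archimedean ring $C_{\Omega}$ (as in Proposition \ref{positivstellensatz}), using density of the image of $X(B_{\Omega})$ in $X(C_{\Omega})$ together with continuity to upgrade $\hat f\ge 0$ to the compactification, applying Jacobi's theorem there, and then carefully clearing denominators in $x_{i_0}$ to return to $C_{\Omega'}[x_{i_0}]$; the delicate technical point is that the power of $1+x_{i_0}^2$ needed to clear denominators must be chosen uniformly in $\epsilon$. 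Since the authors explicitly delegate this core computation to \cite{M2} and \cite{M3}, the plan relies on those sources for the archimedean cylinder argument, while the reduction from infinite $\Omega$ to finite $I$ is the routine ingredient that makes the extension ``straightforward'' as remarked at the start of Section~6.
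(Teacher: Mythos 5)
Your proof is correct, and like the paper it delegates the real content to the cylinder Positivstellensatz of \cite{M1}/\cite{M2}; the only structural difference is where the reduction happens. For part (1) the paper does \emph{not} pass to finitely many variables at all: it observes that $\sum C_{\Omega'}^{\,2}$ is an archimedean quadratic module of the (infinitely generated) ring $C_{\Omega'}$ and applies the archimedean cylinder theorem \cite[Theorem 5.1]{M1} directly to $C_{\Omega'}[x_{i_0}]$, since that theorem requires no finiteness hypothesis on the base ring. You instead first shrink to $C_I[x_{i_0}]$ for a finite $I\subseteq\Omega'$ containing the indices occurring in $f$, use surjectivity of $\mathbb{S}^{\Omega'}\times\mathbb{R}\to\mathbb{S}^{I}\times\mathbb{R}$ to transfer the positivity hypothesis, invoke the finite-variable cylinder result, and then push the sum-of-squares representation back up along $C_I[x_{i_0}]\subseteq C_{\Omega'}[x_{i_0}]$; each of these steps is sound, and in particular the uniformity of $k$ in $\epsilon$ is supplied by the finite-variable statement exactly as you note. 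For part (2) both arguments coincide: choose $q$ so that $f/q\in C_{\Omega'}[x_{i_0}]$, upgrade $\widehat{f/q}\ge 0$ from the dense image of $\mathbb{R}^{\Omega}$ to all of $\mathbb{S}^{\Omega'}\times\mathbb{R}$ by continuity, apply (1), and clear denominators. What the paper's route buys is brevity and a cleaner statement of where the archimedean hypothesis is used; what your route buys is that you only ever need the finite-variable version of the cylinder theorem, at the cost of the (routine but necessary) surjectivity/transfer argument.
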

\begin{proof}
(1) Since the quadratic module $\sum C_{\Omega'}^2$ of $C_{\Omega'}$ is archimedean, this is an immediate consequence of 
\cite[Theorem 5.1]{M1}. (2) If $f\in B_{\Omega'}[x_{i_0}]$, say $f\in B_I[x_{i_0}]$ where $I \subseteq \Omega'$ is finite, there 
exists an element $q$ of the form $q = \prod_{j\in I} (1+x_j^2)^{\ell_j}$ such that $\frac{f}{q}\in C_{\Omega'}[x_{i_0}]$. 
Thus, if $f \ge 0$ on $\mathbb{R}^{\Omega}$ then $\frac{f}{q}\ge 0$ on $\mathbb{S}^{\Omega'}\times \mathbb{R}$ so 
$\frac{f}{q}+\epsilon(1+x_{i_0}^2)^k \in \sum C_{\Omega'}[x_{i_0}]^2$ for some $k\ge 0$ and, consequently, 
$f+\epsilon q(1+x_{i_0}^2)^k \in \sum B_{\Omega'}[x_{i_0}]^2$ $\forall$ real $\epsilon >0$.
\end{proof}
\begin{prop}\label{c2}
For a linear functional $L : A_{\Omega} \rightarrow \mathbb{R}$ the following are equivalent:
\begin{itemize}
	\item[(1)] $L$ is a positive linear functional.
	\item[(2)] $L$ extends to a PSD linear functional $L : B_{\Omega} \rightarrow \mathbb{R}$.
	\item[(3)] $L$ extends to a PSD linear functional $L : B_{\Omega'}[x_{i_0}] \rightarrow \mathbb{R}$.
	\item[(4)] $\forall$ $f\in A_{\Omega}$ and $\forall$ $p$ of the form $p= \prod_{j=1}^n (1+x_{i_j}^2)^{\ell_j}$, where 
	$x_{i_1},\dots,x_{i_n}$ are the variables appearing in the coefficients of $f$ (viewing $f$ as a polynomial in $x_{i_0}$ with 
	coefficients in $A_{\Omega'}$)  and $\ell_j \ge 0$, $j=1,\dots,n$, $pf \in \sum A_{\Omega}^2$ $\Rightarrow$ $L(f)\ge 0$.
\end{itemize}
\end{prop}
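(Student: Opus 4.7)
The plan is to establish the cycle $(1) \Rightarrow (2) \Rightarrow (3) \Rightarrow (4) \Rightarrow (1)$, closely paralleling the proof of Proposition \ref{extendibility} but with Proposition \ref{c1}(2) playing the role that Proposition \ref{positivstellensatz}(2) played there. The implication $(1) \Rightarrow (2)$ is exactly Proposition \ref{extendibility}, while $(2) \Rightarrow (3)$ is immediate: any PSD extension $L : B_{\Omega} \rightarrow \mathbb{R}$ restricts to a PSD linear functional on the subalgebra $B_{\Omega'}[x_{i_0}]$.

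For $(3) \Rightarrow (4)$, let $L'$ denote a PSD extension of $L$ to $B_{\Omega'}[x_{i_0}]$, and suppose $pf \in \sum A_{\Omega}^2$ with $p = \prod_{j=1}^n (1+x_{i_j}^2)^{\ell_j}$, where $x_{i_1},\dots,x_{i_n}$ are the variables appearing in the coefficients of $f$ (so $i_j \in \Omega'$). Then $p$ and $1/p$ lie in $B_{\Omega'} \subseteq B_{\Omega'}[x_{i_0}]$, and the identity
\[
	\frac{1}{1+x_j^2} = \Bigl(\frac{1}{1+x_j^2}\Bigr)^2 + \Bigl(\frac{x_j}{1+x_j^2}\Bigr)^2
\]
exhibits each $1/(1+x_j^2)$ as a sum of squares in $B_{\Omega'}[x_{i_0}]$, so $1/p \in \sum B_{\Omega'}[x_{i_0}]^2$. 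Writing $f = (pf) \cdot (1/p)$ as a product of two sums of squares yields $f \in \sum B_{\Omega'}[x_{i_0}]^2$, hence $L(f) = L'(f) \ge 0$.

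For $(4) \Rightarrow (1)$, suppose $\hat{f} \ge 0$ on $\mathbb{R}^{\Omega}$. Proposition \ref{c1}(2) produces $q = \prod_{k=1}^n (1+x_{i_k}^2)^{\ell_k}$ (with $x_{i_1},\dots,x_{i_n}$ the coefficient variables of $f$) and an integer $m \ge 0$ such that $f + \epsilon q(1+x_{i_0}^2)^m \in \sum B_{\Omega'}[x_{i_0}]^2$ for every real $\epsilon > 0$. The main obstacle is that (4) restricts $p$ to involve only coefficient variables of $f$, whereas a sum-of-squares representation in $B_{\Omega'}[x_{i_0}]$ might a priori mention extra variables. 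To remove them, note $f + \epsilon q(1+x_{i_0}^2)^m$ involves only $x_{i_0}, x_{i_1},\dots,x_{i_n}$, so for any variable $x_j$ with $j \in \Omega' \setminus \{i_1,\dots,i_n\}$ occurring in the representation we apply the $\mathbb{R}$-algebra homomorphism $B_{\Omega'}[x_{i_0}] \rightarrow B_{\Omega' \setminus \{j\}}[x_{i_0}]$ sending $x_j \mapsto 0$; it fixes $f + \epsilon q(1+x_{i_0}^2)^m$, sends $1+x_j^2$ to $1$ (so the denominators remain invertible), and carries sums of squares to sums of squares. Finitely many such specializations produce a representation in $B_{\{i_1,\dots,i_n\}}[x_{i_0}]$; clearing denominators then yields $p = \prod_{k=1}^n (1+x_{i_k}^2)^{M_k}$ with $p^2 \bigl(f + \epsilon q(1+x_{i_0}^2)^m\bigr) \in \sum A_{\Omega}^2$. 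Since $p^2$ is of the form allowed in (4), hypothesis (4) gives $L(f) + \epsilon L\bigl(q(1+x_{i_0}^2)^m\bigr) \ge 0$; letting $\epsilon \downarrow 0$ yields $L(f) \ge 0$.
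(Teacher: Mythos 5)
Your proof is correct and follows essentially the same route as the paper: the cycle $(1)\Rightarrow(2)\Rightarrow(3)\Rightarrow(4)\Rightarrow(1)$, with Proposition \ref{extendibility} supplying the first step and Proposition \ref{c1}(2) the last. The only differences are minor: in $(3)\Rightarrow(4)$ you write $f=(pf)\cdot(1/p)$ with $1/p$ an explicit sum of squares where the paper instead passes through $p^2f\in\sum A_{\Omega}^2$ and divides by the square $1/p^2$, and in $(4)\Rightarrow(1)$ your specialization argument killing extraneous variables makes explicit a point that the paper's ``clearing denominators'' step glosses over.
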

\begin{proof} (1) $\Rightarrow$ (2). By Proposition \ref{extendibility}.
(2) $\Rightarrow$ (3). Immediate.
(3) $\Rightarrow$ (4). Since $pf \in \sum A_{\Omega}^2$, it follows that $p^2f \in \sum A_{\Omega}^2$, so $f \in \sum B_{\Omega'}[x_{i_0}]^2$. 
Since the extension of $L$ to $B_{\Omega'}[x_{i_0}]$ is PSD this implies $L(f)\ge 0$.
(4) $\Rightarrow$ (1). Suppose $f\in A_{\Omega}$, $\hat{f}\ge 0$ on $\mathbb{R}^{\Omega}$. By Proposition \ref{c1} (2) 
$\exists$ $q = \prod_{j=1}^n (1+x_{i_j}^2)^{\ell_j}$, where $x_{i_1},\dots,x_{i_n}$ are the variables appearing in the coefficients of $f$ 
and $k\ge 0$ such that $f+\epsilon q(1+x_{i_0}^2)^{k} \in \sum B_{\Omega'}[x_{i_0}]^2$ $\forall$ $\epsilon > 0$. Clearing denominators, 
$p^2(f+\epsilon q(1+x_{i_0}^2)^k)\in \sum A_{\Omega}^2$ for some $p$ (depending on $\epsilon$) of the form $p= \prod_{j=1}^n (1+x_{i_j}^2)^{m_j}$. 
By (4), $L(f)+\epsilon L(q(1+x_{i_0}^2)^k)=L(f+\epsilon q(1+x_{i_0}^2)^k)\ge 0$. Since $\epsilon>0$ is arbitrary, this implies $L(f)\ge 0$.
\end{proof}
\begin{prop}\label{c3}
Suppose $L : A_{\Omega} \rightarrow \mathbb{R}$ is linear and PSD and condition (\ref{n}) holds, for each $j \in \Omega$, $j \ne i_0$.
Then there exists a constructibly Borel measure $\nu$ on $\mathbb{R}^{\Omega}$ such that $L = L_{\nu}$. If condition (\ref{n}) also holds 
for $j=i_0$ then $\nu$ is unique.
\end{prop}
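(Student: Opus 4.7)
The plan is to lift the finite-variable cylinder theorems of \cite[Section 4]{M2} and \cite{M3} to infinite $\Omega$, in exactly the same fashion as Proposition \ref{exactly one} lifted the finite-variable Nussbaum theorem. The key observation is that condition (\ref{n}) at index $j\ne i_0$ forces enough $L^2(L)$-approximation to treat $x_j$ as a ``localized'' variable (inverting $1+x_j^2$), while the absence of the condition at $i_0$ is compensated by keeping $x_{i_0}$ unlocalized and relying on the archimedean structure of $C_{\Omega'}$ together with a moment representation along the $x_{i_0}$-axis.

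First I would extend $L$ to a PSD linear functional $\tilde L : B_{\Omega'}[x_{i_0}] \to \mathbb{R}$ by iterating, over $j \in \Omega'$, the one-variable Petersen--Fuglede argument underlying Proposition \ref{exactly one}: since $\lim_k L(|1-(1+x_j^2)q_{jk}\overline{q_{jk}}|^2)=0$, the element $\frac{1}{1+x_j^2}$ has a canonical value in the GNS completion at each stage, and the resulting extension is again PSD. Note that we cannot simply invoke Proposition \ref{c2} here, because we have no direct access to positivity of $L$ on $\operatorname{Pos}_{A_\Omega}(\mathbb{R}^\Omega)$; verifying that the iterated localization preserves PSD-ness from (\ref{n}) alone is the infinite-variable analogue of the extension procedure in \cite[Section 4]{M2}, and is the main obstacle.

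Next I would restrict $\tilde L$ to $C_{\Omega'}[x_{i_0}]$ and argue exactly as in the proofs of Propositions \ref{main} and \ref{new}. For each countable $I \subseteq \Omega'$, the restriction $\tilde L|_{C_I[x_{i_0}]}$ yields, via \cite[Corollaries 4.7 and 4.8]{M2}, a Radon measure $\mu_I$ on $\mathbb{S}^I \times \mathbb{R}$ representing it; by uniqueness these measures are compatible under the canonical restriction maps, so they glue to a measure on the constructibly Borel $\sigma$-algebra of $\mathbb{S}^{\Omega'} \times \mathbb{R}$. Removing the null sets $\Delta_j \times \mathbb{R}$ ($j \in \Omega'$), as in Proposition \ref{main}, and pushing forward by the identification of $\mathbb{R}^\Omega$ with a subspace of $\mathbb{S}^{\Omega'} \times \mathbb{R}$ yields the desired constructibly (Borel) measure $\nu$ on $\mathbb{R}^\Omega$, satisfying $L = L_\nu$.

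Finally, if (\ref{n}) also holds at $j=i_0$ then uniqueness follows immediately from Proposition \ref{exactly one}, since the hypotheses of that proposition are then fulfilled. The bulk of the work is the initial PSD extension to $B_{\Omega'}[x_{i_0}]$; everything downstream parallels the constructions of Section 3 applied to $C_{\Omega'}[x_{i_0}] \subseteq C_\Omega$.
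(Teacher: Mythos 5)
Your overall route is the one the paper intends: the paper's own ``proof'' is literally a one-line instruction to argue as in \cite[Corollaries 4.7 and 4.8]{M2} and \cite[Theorem 0.1]{M3}, i.e.\ to run the cylinder localization of Section 6 (extend $L$ to a PSD functional on $B_{\Omega'}[x_{i_0}]$ using condition (\ref{n}) at each $j\ne i_0$, pass to $C_{\Omega'}[x_{i_0}]$, produce a measure on $\mathbb{S}^{\Omega'}\times\mathbb{R}$, and transport it to $\mathbb{R}^{\Omega}$ as in Propositions \ref{main} and \ref{new}). You correctly identify the PSD extension to $B_{\Omega'}[x_{i_0}]$ as the substantive input from \cite{M2}/\cite{M3}, and the uniqueness claim under (\ref{n}) at $i_0$ does follow at once from Proposition \ref{exactly one}.

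There is, however, one step that does not work as you state it: ``by uniqueness these measures are compatible under the canonical restriction maps.'' The measures $\mu_I$ on $\mathbb{S}^I\times\mathbb{R}$ representing $\tilde L|_{C_I[x_{i_0}]}$ are \emph{not} unique in general, precisely because nothing is assumed about determinacy in the $x_{i_0}$-direction; already for $\Omega=\{i_0\}$ the restriction $L|_{\mathbb{R}[x_{i_0}]}$ may be an indeterminate Hamburger moment functional. So choosing a representing measure independently for each countable $I$ gives no compatibility, and the gluing collapses. (This is in contrast to Propositions \ref{main} and \ref{new}, where the base spaces $X(C_I)$ are compact and uniqueness is automatic.) The fix is to reverse the order of operations: construct a \emph{single} measure $\mu$ on $X(C_{\Omega'})\times\mathbb{R}$ representing $\tilde L|_{C_{\Omega'}[x_{i_0}]}$ directly --- using the cylinder Positivstellensatz of Proposition \ref{c1}(1) (valid because $\sum C_{\Omega'}^2$ is archimedean even for uncountable $\Omega'$, via \cite[Theorem 5.1]{M1}) together with a Haviland/Riesz representation over the compact base $X(C_{\Omega'})$ --- and only then take its pushforwards to the $\mathbb{S}^I\times\mathbb{R}$, which are compatible by construction. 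With that modification the remainder of your argument (discarding the null sets $\Delta_j\times\mathbb{R}$ and identifying the result with a constructibly Radon measure on $\mathbb{R}^{\Omega}$) goes through as in Propositions \ref{main}, \ref{addendum} and \ref{new}.
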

\begin{proof}
Argue as in \cite[Corollary 4.7 and 4.8]{M2} and \cite[Theorem 0.1]{M3}.
\end{proof}
Combining Proposition \ref{c3} and Remark \ref{clarify} yields the following result which is due to Nussbaum \cite{N} in case $|\Omega|<\infty$.
\begin{prop}\label{c4}
Suppose $L : A_{\Omega} \rightarrow \mathbb{R}$ is linear and PSD and, for each $j \in \Omega$, $j\ne i_0$ the Carleman condition 
(\ref{o}) holds.
Then there exists a constructibly Radon measure $\nu$ on $\mathbb{R}^{\Omega}$ such that $L = L_{\nu}$. If condition (\ref{o}) also 
holds for $j=i_0$ then $\nu$ is unique.
\end{prop}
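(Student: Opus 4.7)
The plan is to deduce Proposition \ref{c4} directly from Proposition \ref{c3} by using Remark \ref{clarify} (ii) to upgrade the Carleman hypothesis (\ref{o}) into the functional-analytic density hypothesis (\ref{n}). So the argument is essentially a one-step reduction, with no new ideas required beyond correctly applying what has already been established in the excerpt.

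First, I would record that, by Remark \ref{clarify} (ii), for any index $j$ for which (\ref{o}) holds we also have (\ref{n}): concretely, Carleman's condition guarantees the existence of a sequence $\{q_{jk}\}$ in $\mathbb{C}[x_j] \subseteq A_\Omega \otimes \mathbb{C}$ with $L(|1-(1+x_j^2)q_{jk}\overline{q_{jk}}|^2) \to 0$. Applied with the standing hypothesis of Proposition \ref{c4}, this yields (\ref{n}) for every $j \in \Omega$ with $j \neq i_0$.

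Second, I would invoke the existence part of Proposition \ref{c3}: since $L$ is PSD and (\ref{n}) holds for every $j \in \Omega \setminus \{i_0\}$, there exists a constructibly Radon measure $\nu$ on $\mathbb{R}^{\Omega}$ with $L = L_\nu$. Finally, for the uniqueness clause, assuming in addition that (\ref{o}) holds for $j = i_0$, one more application of Remark \ref{clarify} (ii) gives (\ref{n}) at $j = i_0$, so (\ref{n}) now holds for all $j \in \Omega$; the uniqueness statement of Proposition \ref{c3} then immediately delivers uniqueness of $\nu$.

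There is essentially no obstacle here: the substantive content has already been absorbed into Proposition \ref{c3} (the existence/uniqueness via the cylinder method) and into Remark \ref{clarify} (the classical Carleman-to-(\ref{n}) implication, traceable to \cite{BC} and \cite{M3}). The only thing to be slightly careful about is the asymmetric role of $i_0$: existence needs (\ref{n}) only for $j \neq i_0$ because Proposition \ref{c3} is specifically the cylinder version that handles the distinguished variable differently, whereas uniqueness requires the hypothesis at every index, which is exactly how the statement of Proposition \ref{c4} is phrased.
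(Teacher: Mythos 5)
Your proposal is correct and is exactly the paper's argument: the paper derives Proposition \ref{c4} by combining Proposition \ref{c3} with Remark \ref{clarify} (ii), which converts the Carleman condition (\ref{o}) at each relevant index into condition (\ref{n}), precisely as you do for both the existence part (indices $j \neq i_0$) and the uniqueness part (all indices). No further comment is needed.
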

Proposition \ref{strong} extends in a similar way.
\begin{prop}\label{c5}
Suppose $L : A_{\Omega} \rightarrow \mathbb{R}$ is linear and PSD. For each $j\in \Omega$ fix a Radon measure $\mu_j$ on $\mathbb{R}$ 
such that $L|_{\mathbb{R}[x_j]} = L_{\mu_j}$ and suppose, for each $j \in \Omega$, $j \ne i_0$ condition (\ref{s}) holds.
Then there exists a constructibly Radon measure $\mu$ on $\mathbb{R}^{\Omega}$ such that $L=L_{\mu}$. If condition (\ref{s}) also holds 
for $j=i_0$ then $\nu$ is unique.
\end{prop}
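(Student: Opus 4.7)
My plan follows the template of Proposition \ref{strong} (whose proof in turn follows \cite[Theorem 4.11]{M2}), modified in the same way that Propositions \ref{c3} and \ref{c4} adapt Propositions \ref{exactly one} and \ref{Nussbaum}: the distinguished variable $x_{i_0}$ is treated separately, and no density hypothesis is imposed on it. The entire argument reduces to showing that $L$ extends to a PSD linear functional on $B_{\Omega'}[x_{i_0}]$; everything after that is mechanical.

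For the existence half, the first step is to construct a PSD extension $\widetilde L$ of $L$ on the localized ring $B_{\Omega'}[x_{i_0}]$. The inputs available are, for each $j\in\Omega$ with $j\ne i_0$, the Radon measure $\mu_j$ on $\mathbb{R}$ satisfying $L|_{\mathbb{R}[x_j]}=L_{\mu_j}$ together with the $\mathcal{L}^4$-density condition (\ref{s}). Following the scheme of \cite[Theorem 4.11]{M2}, I would use (\ref{s}) to approximate $\frac{1}{x_j-\mathrm{i}}$ and $\frac{1}{x_j+\mathrm{i}}$ in $\mathcal{L}^4(\mu_j)$ by polynomials $q_{jk}\in\mathbb{C}[x_j]$, and then use Cauchy--Schwarz on $L_{\mu_j}$ to define $\widetilde L$ coherently on each localized slice $B_{\{j\}}$ for $j\ne i_0$. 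Since any element of $B_{\Omega'}[x_{i_0}]$ lies in some $B_I[x_{i_0}]$ with $I\subseteq\Omega'$ finite, these slice-definitions glue together into a single well-defined PSD linear functional $\widetilde L$ on all of $B_{\Omega'}[x_{i_0}]$. With $\widetilde L$ in hand, Proposition \ref{c2}, implication $(3)\Rightarrow(2)$, extends $\widetilde L$ further to a PSD functional on $B_{\Omega}$, and Proposition \ref{new} then yields the desired constructibly Radon measure $\mu$ on $\mathbb{R}^{\Omega}$ with $L=L_{\mu}$.

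For the uniqueness clause, assume (\ref{s}) also holds at $j=i_0$. Then (\ref{s}) holds for \emph{every} $j\in\Omega$, so the full hypothesis of Proposition \ref{strong} is met and uniqueness follows from that proposition directly. (Alternatively, one can argue intrinsically as in Proposition \ref{at most one}: combining Hölder's inequality with an $\mathcal{L}^2$-density argument of the type in Proposition \ref{density corollary} one shows that, under (\ref{s}) at every $j$, the subspace $A_{\Omega}\otimes_{\mathbb{R}}\mathbb{C}$ is dense in $\mathcal{L}^2(\mu)$, and two constructibly Radon measures with identical moments therefore induce the same $\mathcal{L}^2$-functional and so coincide.)

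The step I expect to be the main obstacle is the first one: the coherent construction of the PSD extension $\widetilde L$ on $B_{\Omega'}[x_{i_0}]$ from only the slice data $\{\mu_j\}_{j\ne i_0}$ together with (\ref{s}) at $j\ne i_0$. In the finite-variable case of \cite[Theorem 4.11]{M2}, this is carried out by an induction on the number of denominator factors $(1+x_j^2)^{-1}$ that appear in an element of $B_{\Omega'}[x_{i_0}]$, with the inductive step controlled by the Cauchy--Schwarz/Hölder bound on the error introduced when $(x_j-\mathrm{i})^{-1}$ is replaced by $q_{jk}$; the coefficient ring at that stage already absorbs the free variable $x_{i_0}$ on the same footing as the remaining $x_{i_k}$, $i_k\in\Omega'$, so no density hypothesis at $i_0$ is ever invoked. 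Transferring this argument to infinitely many variables is essentially bookkeeping, since each $f\in B_{\Omega'}[x_{i_0}]$ involves only finitely many coordinates and the finite-dimensional estimates can be applied on the appropriate $B_I[x_{i_0}]$.
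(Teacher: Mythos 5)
Your proposal is correct and follows essentially the same route the paper intends: the paper's entire proof of Proposition \ref{c5} is the remark that Proposition \ref{strong} ``extends in a similar way,'' i.e.\ one runs the argument of \cite[Theorem 4.11]{M2} with $x_{i_0}$ absorbed into the coefficient ring exactly as Propositions \ref{c3} and \ref{c4} modify Propositions \ref{exactly one} and \ref{Nussbaum}, which is precisely your plan (build the PSD extension on $B_{\Omega'}[x_{i_0}]$ from the $\mathcal{L}^4$-density at $j\ne i_0$, pass to $B_{\Omega}$ via Proposition \ref{c2} and Proposition \ref{new}, and get uniqueness from Proposition \ref{strong} when (\ref{s}) holds at $i_0$ as well). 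Your identification of the coherent construction of $\widetilde L$ on $B_{\Omega'}[x_{i_0}]$ as the real content, handled by induction on denominator factors with Cauchy--Schwarz/H\"older control, matches the cited argument.
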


\end{document}